\documentclass{article}

\usepackage{hyperref}

\usepackage{fullpage}
\usepackage[authoryear,sort]{natbib}
\usepackage[utf8]{inputenc} \usepackage[T1]{fontenc}  \usepackage{etoolbox}
\newtoggle{icml}
\togglefalse{icml}
\usepackage{microtype}    \usepackage{comment}
\usepackage{abstract}     \usepackage{amsmath,amssymb,mathtools,amsfonts}
\usepackage{amsthm}
\usepackage{nicefrac}       \usepackage{graphicx,subfig,color,wrapfig,epsfig,epstopdf}
\usepackage{enumitem}
\usepackage{booktabs}

\usepackage{algorithm,algorithmicx,algpseudocode}

\usepackage{hyperref}       \usepackage{cleveref}
\usepackage{url}            \usepackage{booktabs}       \usepackage{listings}
\usepackage{multirow}
\usepackage{stackengine}
\newcommand\numberthis{\addtocounter{equation}{1}\tag{\theequation}}

\allowdisplaybreaks[4]

\newcounter{ass_counter}
\newcounter{thm_counter}
\newcounter{remark_counter}
\newtheorem{theorem}[thm_counter]{Theorem}\newtheorem{lemma}[thm_counter]{Lemma}\newtheorem{corollary}[thm_counter]{Corollary}
\newtheorem{assumption}[ass_counter]{Assumption}
\newtheorem{remark}[remark_counter]{Remark}
\Crefname{assumption}{Assumption}{Assumptions}
\algblockdefx[BlockUntil]{BlockUntil}{EndBlockUntil}[1][]{\textbf{wait until}
  #1 \textbf{then}}{\textbf{end wait until}} \usepackage[math]{anttor}
\usepackage{antpolt}
\linespread{1.05}                
 
\usepackage{caption}

\title{Asynchronous Decentralized Parallel Stochastic Gradient Descent}

\date{\today}

\newcommand\CoAuthorMark{\footnotemark[\arabic{footnote}]}

\usepackage{authblk}
 \author[1]{Xiangru Lian\footnote{Contributed equally.}}
 \author[2]{Wei Zhang\protect\CoAuthorMark}
 \author[3]{Ce Zhang}
 \author[1, 4]{Ji Liu}
 \affil[ ]{\texttt{xiangru@yandex.com, weiz@us.ibm.com, ce.zhang@inf.ethz.ch, ji.liu.uwisc@gmail.com}}
 \affil[1]{Department of Computer Science, University of Rochester}
 \affil[2]{IBM T. J. Watson Research Center}
 \affil[3]{Department of Computer Science, ETH Zurich}
 \affil[4]{Tencent AI Lab}

\begin{document}

\maketitle

\begin{abstract}

Most commonly used distributed machine learning systems are either synchronous
or centralized asynchronous. Synchronous algorithms like AllReduce-SGD perform
poorly in a heterogeneous environment, while asynchronous algorithms using a
parameter server suffer from 1) communication bottleneck at parameter servers
when workers are many, and 2) significantly worse convergence when the traffic
to parameter server is congested. {\em Can we design an algorithm that is robust
  in a heterogeneous environment, while being communication efficient and
  maintaining the best-possible convergence rate?} In this paper, we propose an
asynchronous decentralized stochastic gradient decent algorithm (AD-PSGD)
satisfying all above expectations. Our theoretical analysis shows AD-PSGD
converges at the optimal $O(1/\sqrt{K})$ rate as SGD and has linear speedup
w.r.t. number of workers. Empirically, AD-PSGD outperforms the best of
decentralized parallel SGD (D-PSGD), asynchronous parallel SGD (A-PSGD), and
standard data parallel SGD (AllReduce-SGD), often by orders of magnitude in a
heterogeneous environment. When training ResNet-50 on ImageNet with up to 128
GPUs, AD-PSGD converges (w.r.t epochs) similarly to the
AllReduce-SGD, but each epoch can be up to 4-8$\times$ faster than its synchronous counterparts
in a network-sharing HPC environment. \iftoggle{icml}{}{To the best of our
  knowledge, AD-PSGD is the first asynchronous algorithm that achieves a similar
  epoch-wise convergence rate as AllReduce-SGD, at an over 100-GPU scale.}

 \end{abstract}

\section{Introduction}
\label{sec:introduction}

It often takes hours to train large deep learning tasks such as ImageNet, even
with hundreds of GPUs~\cite{facebook-1hr}. At this scale, how workers
communicate becomes a crucial design choice. Most existing systems such as
TensorFlow \citep{abadi2016tensorflow}, MXNet \citep{chen2015mxnet}, and CNTK
\citep{cntk} support two communication modes: (1) synchronous communication via
parameter servers or AllReduce, or (2) asynchronous communication via parameter
servers. When there are stragglers (i.e., slower workers) in the system, which
is common especially at the scale of hundreds devices, asynchronous approaches
are more robust. However, most asynchronous implementations have a {\em
  centralized} design, as illustrated in Figure~\ref{fig:decen-graphs}(a) --- a
central server holds the shared model for all other workers. Each worker
calculates its own gradients and updates the shared model asynchronously. The
parameter server may become a communication bottleneck and slow down the
convergence. We focus on the question: {\em Can we remove the central server
  bottleneck in asynchronous distributed learning systems while maintaining the
  best possible convergence rate?}

\begin{figure}[t]
  \centering
  \iftoggle{icml}{
  \includegraphics[width=0.45\textwidth]{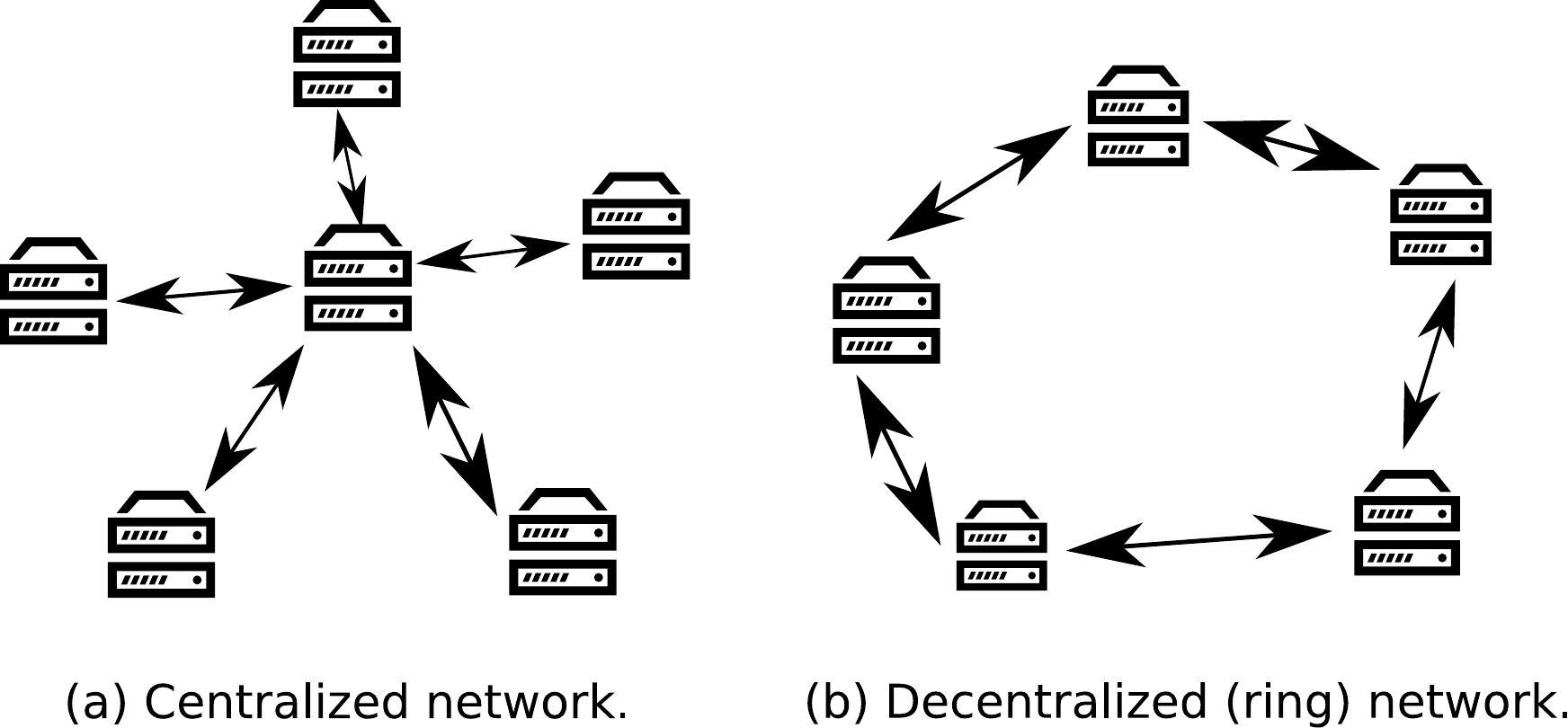}
}{
  \includegraphics[width=0.6\textwidth]{figures/decentr-networks.pdf}
}
  \caption{Centralized network and decentralized network.}
  \label{fig:decen-graphs}
\end{figure}

\iftoggle{icml}{\begin{table}
    \centering
    \begin{minipage}{1.0\linewidth}
      \resizebox{\linewidth}{!}{
        \begin{tabular}{c | l | l}
          \  & \stackanchor{Communication complexity}{(n.t./n.h.)}\footnote{n.t. means number of gradients/models transferred at the busiest worker per $n$ (minibatches of) stochastic gradients updated. n.h. means number of handshakes at the busiest worker per $n$ (minibatches of) stochastic gradients updated.} & Idle time\\
          \hline
          S-PSGD \citep{ghadimi2016mini} & Long ($O(n)$/$O(n)$) & Long\\
          A-PSGD \citep{Lian:2015:APS:2969442.2969545} & Long ($O(n)$/$O(n)$) & Short\\
          AllReduce-SGD \citep{luehr_2017} & Medium ($O(1)/O(n)$) & Long\\
          D-PSGD \citep{1705.09056} & Short ($O(\deg(G))/O(\deg(G))$) & Long\\
          \hline
          \textbf{AD-PSGD} (this paper) & Short ($O(\deg(G))/O(\deg(G))$) & Short
        \end{tabular}
      }
    \end{minipage}
  \caption{\label{tab:comp-algos}Comparison of different distributed machine
      learning algorithms on a network graph $G$. \emph{Long idle time} means in each iteration the
      whole system needs to wait for the slowest worker. \emph{Short idle time}
      means the corresponding algorithm breaks this synchronization per
      iteration. Note that if $G$ is a ring network as required in
      AllReduce-SGD, $O(\deg(G))= O(1)$.}
    \label{tab:comp}
  \end{table}
}{\begin{figure*}[t]
\noindent \begin{minipage}[t]{1.0\textwidth}
  \begin{table}[H]
    \centering
     \begin{tabular}{c | l | l}
       \  & Communication complexity (n.t./n.h.)\footnote{n.t. means number of gradients/models transferred at the busiest worker per $n$ (minibatches of) stochastic gradients updated. n.h. means number of handshakes at the busiest worker per $n$ (minibatches of) stochastic gradients updated.} & Idle time\\
       \hline
       S-PSGD \citep{ghadimi2016mini} & Long ($O(n)$/$O(n)$) & Long\\
       A-PSGD \citep{Lian:2015:APS:2969442.2969545} & Long ($O(n)$/$O(n)$) & Short\\
       AllReduce-SGD \citep{luehr_2017} & Medium ($O(1)/O(n)$) & Long\\
       D-PSGD \citep{1705.09056} & Short ($O(\deg(G))/O(\deg(G))$) & Long\\
       \hline
       \textbf{AD-PSGD} (this paper) & Short ($O(\deg(G))/O(\deg(G))$) & Short
    \end{tabular}
    \caption{\label{tab:comp-algos}Comparison of different distributed machine
      learning algorithms on a network graph $G$. \emph{Long idle time} means in each iteration the
      whole system needs to wait for the slowest worker. \emph{Short idle time}
      means the corresponding algorithm breaks this synchronization per
      iteration. Note that if $G$ is a ring network as required in
      AllReduce-SGD, $O(\deg(G))= O(1)$.}
    \label{tab:comp}
  \end{table}
\end{minipage}
\end{figure*}
}

Recent work \citep{1705.09056} shows that {\em synchronous decentralized}
parallel stochastic gradient descent (D-PSGD) can achieve comparable convergence
rate as its centralized counterparts without any central bottleneck.
\Cref{fig:decen-graphs}-(b) illustrates one communication topology of D-PSGD in
which each worker only talks to its neighbors. However, the synchronous nature
of D-PSGD makes it vulnerable to stragglers because of the synchronization
barrier at each iteration among {\em all} workers. {\em Is it possible to get
  the best of both worlds of asynchronous SGD and decentralized SGD?}

In this paper, we propose the \emph{asynchronous decentralized} parallel
stochastic gradient decent algorithm (AD-PSGD) that is theoretically justified
to keep the advantages of both asynchronous SGD and decentralized SGD. In
AD-PSGD, workers do not wait for all others and only communicate in a
decentralized fashion. AD-PSGD can achieve linear speedup with respect to the
number of workers and admit a convergence rate of $O(1/\sqrt{K})$, where $K$ is
the number of updates. This rate is consistent with D-PSGD and centralized
parallel SGD. By design, AD-PSGD enables wait-free computation and
communication, which ensures \textit{AD-PSGD always converges better (w.r.t
  epochs or wall time) than D-PSGD as the former allows much more frequent
  information exchanging.}

In practice, we found that AD-PSGD is particularly useful in heterogeneous
computing environments such as cloud-computing, where computing/communication
devices' speed often varies. We implement AD-PSGD in Torch and MPI and evaluate
it on an IBM S822LC cluster of up to 128 P100 GPUs. We show that, on real-world
datasets such as ImageNet, AD-PSGD has the same empirical convergence rate as
its centralized and/or synchronous counterpart. In heterogeneous environments,
AD-PSGD can be faster than its fastest synchronous counterparts by orders of
magnitude. On an HPC cluster with homogeneous computing devices but shared
network, AD-PSGD can still outperform its synchronous counterparts by 4X-8X.

Both the theoretical analysis and system implementations of AD-PSGD are
non-trivial, and they form the two technical contributions of this work.

\section{Related work}
\label{sec:related-work}

We review related work in this section. In the following, $K$ and $n$ refer to
the number of iterations and the number of workers, respectively. A comparison of
the algorithms can be found in \Cref{tab:comp-algos}.

The \emph{Stochastic Gradient Descent} (\textbf{SGD})
\citet{nemirovski2009robust, moulines2011non, ghadimi2013stochastic} is a
powerful approach to solve large scale machine learning problems, with the
optimal convergence rate $O(1/\sqrt{K})$ on nonconvex problems.

For \emph{Synchronous Parallel Stochastic Gradient Descent} (\textbf{S-PSGD}),
every worker fetches the model saved in a parameter server and computes a
minibatch of stochastic gradients. Then they push the stochastic gradients to
the parameter server. The parameter server synchronizes all the stochastic
gradients and update their average into the model saved in the parameter server,
which completes one iteration. The convergence rate is proved to be
$O(1/\sqrt{nK})$ on nonconvex problems \citep{ghadimi2016mini}. Results on
convex objectives can be found in \citet{dekel2012optimal}. \iftoggle{icml}{}{Due to the
synchronization step, all other workers have to stay idle to wait for the slowest
one. In each iteration the parameter server needs to synchronize $O(n)$ workers,
which causes high communication cost at the parameter server especially when $n$
is large.}

The \emph{Asynchronous Parallel Stochastic Gradient Descent} (\textbf{A-PSGD})
\citep{recht2011hogwild,agarwal2011distributed,feyzmahdavian2016asynchronous,paine2013gpu}
breaks the synchronization in S-PSGD by allowing workers to use stale weights to
compute gradients. \iftoggle{icml}{}{Asynchronous algorithms significantly reduce the communication overhead by
avoiding idling any worker and can still work well when part of the computing
workers are down.}
On nonconvex problems, when the staleness of the weights used is
upper bounded, A-PSGD is proved to admit the same convergence rate as S-PSGD
\citep{Lian:2015:APS:2969442.2969545,lianNIPS2016_6551}. 

In \emph{AllReduce Stochastic Gradient Descent implementation}
(\textbf{AllReduce-SGD}) \citep{luehr_2017,patarasuk2009bandwidth,mpi_2017}, the
update rule per iteration is exactly the same as in S-PSGD, so they share the
same convergence rate. \iftoggle{icml}{However, there is no parameter server and all the workers use AllReduce to
  synchronize the stochastic gradients.}{However, there is no parameter server in AllReduce-SGD. The workers are connected
  with a ring network and each worker keeps the same local copy of the model. In
  each iteration, each worker calculates a minibatch of stochastic gradients. Then
  all the workers use AllReduce to synchronize the stochastic gradients, after
  which each worker will get the average of all stochastic gradients.}
 In this procedure, only $O(1)$ amount of gradient is
sent/received per worker, but $O(n)$ handshakes are needed on each worker. This
makes AllReduce slow on high latency network.\iftoggle{icml}{}{ At the end of the iteration the
averaged gradient is updated into the local model of each worker.}
 Since we still
have synchronization in each iteration, the idle time is still high as in
S-PSGD.

In \emph{Decentralized Parallel Stochastic Gradient Descent} (\textbf{D-PSGD})
\citep{1705.09056}, all workers are connected with a network that forms a
connected graph $G$. Every worker has its local copy of the model. In each
iteration, all workers compute stochastic gradients locally and at the same time
average its local model with its neighbors. Finally the locally computed
stochastic gradients are updated into the local models. In this procedure, the
busiest worker only sends/receives $O(\deg(G))$ models and has $O(\deg(G))$
handshakes per iteration. \iftoggle{icml}{}{Note that in D-PSGD the computation and communication
can be done in parallel, which means, when communication time is smaller than
the computation time, the communication can be completely hidden.}
The idle time is still high in D-PSGD because all workers need to finish updating
before stepping into the next iteration. Before \citet{1705.09056} there are also
previous studies on decentralized stochastic algorithms (both synchronous and
asynchronous versions) though \emph{none of them is proved to have speedup when
  the number of workers increases}. For example, \citet{lan2017communication}
proposed a decentralized stochastic primal-dual type algorithm with a
computational complexity of $O(n/\epsilon^{2})$ for general convex objectives
and $O(n/\epsilon)$ for strongly convex objectives. \citet{sirb2016consensus}
proposed an asynchronous decentralized stochastic algorithm with a
$O(n/\epsilon^{2})$ complexity for convex objectives. These bounds do not imply
any speedup for decentralized algorithms. {\citet{bianchi2013performance}}
proposed a similar decentralized stochastic algorithm. The authors provided a
convergence rate for the consensus of the local models when the local models are
bounded. \iftoggle{icml}{}{The convergence to a solution was provided by using central limit
theorem. }
However, they did not provide the convergence rate to the solution. A very
recent paper \citep{tang2018d} extended D-PSGD so that it works better on data
with high variance.
\citet{ram2010asynchronous} proposed an asynchronous subgradient variations of
the decentralized stochastic optimization algorithm for convex problems. The
asynchrony was modeled by viewing the update event as a Poisson process and the
convergence to the solution was shown.
\citet{srivastava2011distributed,sundhar2010distributed} are similar. The main
differences from this work are 1) we take the situation where a worker
calculates gradients based on old model into consideration, which is the case in
the asynchronous setting; 2) we prove that our algorithm can achieve linear
speedup when we increase the number of workers, which is important if we want to
use the algorithm to accelerate training; 3) Our implementation guarantees
deadlock-free, wait-free computation and communication. \citet{wildfire}
proposed another distributed stochastic algorithm, but it requires a centralized
arbitrator to decide which two workers are exchanging weights and it lacks
convergence analysis. \citet{tsianos2016efficient} proposed a gossip based dual
averaging algorithm that achieves linear speedup in the computational
complexity, but in each iteration it requires multiple rounds of communication
to limit the difference between all workers within a small constant.

We next briefly review \emph{decentralized algorithms}. Decentralized algorithms
were initially studied by the control community for solving the consensus
problem where the goal is to compute the mean of all the data distributed on
multiple nodes
{\citep{boyd2005gossip,carli2010gossip,aysal2009broadcast,fagnani2008randomized,olfati2007consensus,schenato2007distributed}}.
For decentralized algorithms used for optimization problems, \citet{lu2010gossip}
proposed two non-gradient-based algorithms for solving one-dimensional
unconstrained convex optimization problems\iftoggle{icml}{.}{ where the objective on each node is
strictly convex, by calculating the inverse function of the derivative of the
local objectives and transmitting the gradients or local objectives to
neighbors, and the algorithms can be used over networks with time-varying
topologies. A convergence rate was not shown but the authors did prove the
algorithms will converge to the solution eventually.}
 {\cite{mokhtari2016dsa}}
proposed a fast decentralized variance reduced algorithm for strongly convex
optimization problems.\iftoggle{icml}{}{ The algorithm is proved to have linear convergence rate
and a nice stochastic saddle point method interpretation is given. However, the
speedup property is unclear and a table of stochastic gradients need to be
stored.}
 {\cite{yuan2016convergence}} studied decentralized gradient descent on
convex and strongly convex objectives.\iftoggle{icml}{}{ The algorithm in each iteration averages
the models of the nodes with their neighbors' and then updates the full gradient
of the local objective function on each node.}
 The subgradient version was
considered in {\citet{nedic2009distributed,ram2009distributed}}. The algorithm is
intuitive and easy to understand. However, the limitation of the algorithm is
that it does not converge to the exact solution because the exact solution is
not a fixed point of the algorithm's update rule. This issue was fixed later by
{\citet{shi2015extra,wu2016decentralized}} by using the gradients of last two
instead of one iterates in each iteration, which was later improved in
\citet{shi2015proximal,li2017decentralized} by considering proximal gradients.
Decentralized ADMM algorithms were analyzed in
\citet{zhang2014asynchronous,shi2014linear,aybat2015distributed}.
\citet{wang2016decentralized} develops a decentralized algorithm for recursive
least-squares problems.

\section{Algorithm}

We introduce the AD-PSGD algorithm in this section.

\paragraph{Definitions and notations} Throughout this paper, we use the
following notation and definitions:
\begin{itemize}
\item $\|\cdot\|$ denotes the vector $\ell_{2}$ norm or the matrix spectral norm
  depending on the argument.
\item $\|\cdot\|_F$ denotes the matrix Frobenius norm.
\item $\nabla f(\cdot)$ denotes the gradient of a function $f$.
\item ${\bf 1}_{n}$ denotes the column vector in $\mathbb{R}^{n}$ with $1$ for all elements.
\item $f^{*}$ denotes the optimal solution to \eqref{eq:4zljdslaf}.
\item $\lambda_{i}(\cdot)$ denotes the $i$-th largest eigenvalue of a matrix.
 \item $e_{i}$ denotes the $i$th element of the standard basis of $\mathbb{R}^{n}$.
\end{itemize}

\subsection{Problem definition}

The decentralized
communication topology is represented as an undirected graph: $(V,E)$, where
$V:=\{1,2,\ldots, n\}$ denotes the set of $n$ workers and $E\subseteq V\times V$
is the set of the edges in the graph. Each worker represents a machine/gpu owning its
local data (or a sensor collecting local data online) such that each worker is
associated with a local loss function
\[
f_{i}(x) := \mathbb{E}_{\xi\sim \mathcal{D}_{i}} F_i(x;\xi),
\]
where $\mathcal{D}_i$ is a distribution associated with the local data at worker
$i$ and $\xi$ is a data point sampled via $\mathcal{D}_i$. The edge means that
the connected two workers can exchange information. For the AD-PSGD algorithm, the
overall optimization problem it solves is
\begin{equation}
\min_{x\in \mathbb{R}^{N}} f(x):= \mathbb{E}_{i\sim \mathcal{I}} f_{i}(x) =
  \sum_{i=1}^{n} p_{i}f_{i}(x),\label{eq:4zljdslaf}
\end{equation}
where $p_{i}$'s define a distribution, that is, $p_i\geq 0$ and $\sum_{i} p_i
=1$, and $p_i$ indicates the updating frequency of worker $i$ or the percentage of
the updates performed by worker $i$. The faster a worker, the higher the
corresponding $p_i$. The intuition is that if a worker is faster than another
worker, then the faster worker will run more epochs given the same amount of time,
and consequently the corresponding worker has a larger impact.

\begin{remark}\small
  To solve the common form of objectives in machine learning using AD-PSGD
  \[
    \min_{x\in \mathbb{R}^N}~\mathbb{E}_{\xi\sim \mathcal{D}} F(x; \xi),
  \]
  we can appropriately distribute data such that Eq.~\eqref{eq:4zljdslaf} solves
  the target objective above:
  \begin{description}
  \item[Strategy-1] Let $\mathcal{D}_i=\mathcal{D}$ and $\mathcal{D}$, that is,
    all worker can access all data, and consequently $F_i(\cdot;\cdot) = F(\cdot;
    \cdot)$, that is, all $f_i(\cdot)$'s are the same;
  \item[Strategy-2] Split the data into all workers appropriately such that the
    portion of data is $p_i$ on worker $i$ and define $\mathcal{D}_i$ to be the
    uniform distribution over the assigned data samples.
  \end{description}
\end{remark}

\subsection{AD-PSGD algorithm}

The AD-PSGD algorithm can be described in the following: each worker maintains a
local model $x$ in its local memory and (using worker $i$ as an example) repeats
the following steps:
\begin{itemize}
\item \textbf{Sample data:} Sample a mini-batch of training data denoted by
  $\{\xi_{m}^{i}\}_{m=1}^{M}$, where $M$ is the batch size.
\item \textbf{Compute gradients:} Use the sampled data to compute the stochastic
  gradient $\sum_{m=1}^{M}\nabla F(\hat{x}^i; \xi_m^{i})$, where $\hat{x}^i$ is read
  from the model in the local memory.
\item \textbf{Gradient update:} Update the model in the local memory by $x^i
  \leftarrow {\color{blue} x^i} - \gamma \sum_{m=1}^{M}\nabla F(\hat{x}^i;
  \xi_m^{i})$. Note that $\hat{x}^i$ may not be the same as ${\color{blue} x^i}$
  as it may be modified by other workers in the \textbf{averaging} step.
\item \textbf{Averaging:} Randomly select a neighbor (e.g. worker ${i'}$) and average
  the local model with the worker $i'$'s model $x^{i'}$ (both models on
  both workers are updated to the averaged model). More specifically, $x^i, x^{i'}
  \leftarrow \frac{x^i}{2} + \frac{x^{i'}}{2}$.
\end{itemize}

Note that each worker runs the procedure above on its own without any global
synchronization. This reduces the idle time of each worker and the training
process will still be fast even if part of the network or workers slow down.

The \textbf{averaging} step can be generalized into the following update for all
workers:
\[
[x^1, x^2, \ldots, x^n] \leftarrow [x^1, x^2, \ldots, x^n] W
\]
where $W$ can be an arbitrary doubly stochastic matrix. This generalization
gives plenty flexibility to us in implementation without hurting our analysis.

All workers run the procedure above simultaneously, as shown in
\Cref{alg:Async-D-PSGD}. We use a virtual counter $k$ to denote the iteration
counter -- every single \textbf{gradient update} happens no matter on which worker
will increase $k$ by $1$. $i_k$ denotes the worker performing the $k$th update.

\subsection{Implementation details}

We briefly describe two interesting aspects of system designs
and leave more discussions to \Cref{sec:add:impl}.

\subsubsection{Deadlock avoidance}

A naive implementation of the above algorithm may cause deadlock --- the
averaging step needs to be atomic and involves updating two workers (the
selected worker and one of its neighbors). As an example, given three fully
connected workers $A$, $B$, and $C$, $A$ sends its local model $x_{A}$ to $B$
and waits for $x_{B}$ from $B$; $B$ has already sent out $x_{B}$ to $C$ and
waits for $C$'s response; and $C$ has sent out $x_C$ to $A$ and waits for $x_A$
from $A$.

We prevent the deadlock in the following way: The
communication network is designed to be a bipartite graph,
that is, the worker set $V$ can be split into two disjoint sets $A$ (active set)
and $P$ (passive set) such that any edge in the graph connects one worker in $A$
and one worker in $P$.
Due to the property of the bipartite graph, the neighbors of any active worker
can only be passive workers and the neighbors of any passive worker can only be
active workers. This implementation avoids deadlock but still fits in the
general algorithm \Cref{alg:Async-D-PSGD} we are analyzing. We leave more
discussions and a detailed implementation for wait-free training to
\Cref{sec:add:impl}.

\subsubsection{Communication topology}

The simplest realization of AD-PSGD algroithm is a ring-based topology. To
accelerate information exchanging, we also implement a communication topology
in which each sender communicates with a reciever that is $2^{i}+1$ hops away in the
ring, where $i$ is an integer from 0 to $\log(n-1)$ ($n$ is the number of
learners). It is easy to see it takes at most $O(\log(n))$
steps for any pair of workers to exchange information instead of $O(n)$ in the
simple ring-based topology. In this way, $\rho$ (as defined in
\Cref{sec:theory}) becomes smaller and the scalability of AD-PSGD improves. This
implementation also enables robustness against slow or failed network links
because there are multiple routes for a worker to disseminate its information.

\begin{algorithm}
  \caption{AD-PSGD (logical view) \label{alg:Async-D-PSGD}}
  \begin{minipage}{\iftoggle{icml}{0.48\textwidth}{\textwidth}}
    \iftoggle{icml}{
      \small
    }{}
    \begin{algorithmic}[1]
      \Require Initialize local models $\{x_{0}^{i}\}_{i=1}^{n}$ with the same initialization, learning rate
      $\gamma$      , batch size $M$, and total number of iterations $K$.
      \For {$k=0,1,\ldots, K-1$}
        \State Randomly sample a worker $i_{k}$ of the graph $G$ and randomly
        sample an averaging matrix $W_{k}$ which can be dependent on $i_{k}$.
        \State Randomly sample a batch
        \iftoggle{icml}{\[\xi_{k}^{i_{k}}:=(\xi_{k,1}^{i_{k}}, \xi_{k,2}^{i_{k}},
            \ldots, \xi_{k,M}^{i_{k}})\]}{$\xi_{k,i_{k}}:=(\xi_{k,1}^{i_{k}},
          \xi_{k,2}^{i_{k}}, \ldots, \xi_{k,M}^{i_{k}})$} from local data of the
        $i_{k}$-th worker.
        \State \label{step:C} Compute the stochastic gradient locally
        \iftoggle{icml}{\[g_k(\hat{x}_{k}^{i_{k}};\xi_{k}^{i_{k}}) :=
            \sum_{j=1}^{M}\nabla F(\hat{x}_{k}^{i_{k}};
            \xi_{k,j}^{i_{k}})\]}{$g_k(\hat{x}_{k}^{i_{k}};\xi_{k}^{i_{k}}) :=
          \sum_{j=1}^{M}\nabla F(\hat{x}_{k}^{i_{k}}; \xi_{k,j}^{i_{k}})$}.
        \State \label{step:A}
        Average local models by \footnote{Note that \Cref{step:C}
          and \Cref{step:A} can run in parallel.} \iftoggle{icml}{\[[x^1_{k+1/2}, x^2_{k+1/2},
            \ldots, x^n_{k+1/2}] \leftarrow [x^1_k, x^2_k, \ldots, x^n_k]
            W_k\]}{$[x^1_{k+1/2}, x^2_{k+1/2}, \ldots, x^n_{k+1/2}] \leftarrow
          [x^1_k, x^2_k, \ldots, x^n_k] W_k$}
        \State Update the local model \iftoggle{icml}{\[x_{k+1}^{i_{k}} \gets x_{k+1/2}^{i_{k}} - \gamma
            g_{k}(\hat{x}_{k}^{i_{k}};\xi_{k}^{i_{k}}),\] \[x_{k+1}^{j}\gets
            x_{k+1/2}^{j},\forall j\neq i_{k}.\]}{$x_{k+1}^{i_{k}} \gets x_{k+1/2}^{i_{k}} - \gamma
          g_{k}(\hat{x}_{k}^{i_{k}};\xi_{k}^{i_{k}}) $ and $x_{k+1}^{j}\gets
          x_{k+1/2}^{j}, \forall j\neq i_{k}.$}
      \EndFor
      \State Output the average of the models on all workers for inference.
    \end{algorithmic}
  \end{minipage}
\end{algorithm}

\section{Theoretical analysis}
\label{sec:theory}
In this section we provide theoretical analysis for the AD-PSGD
algorithm. We will show that the convergence rate of AD-PSGD is consistent with SGD and D-PSGD.

Note that by counting each update of stochastic gradients as one iteration, the
update of each iteration in \Cref{alg:Async-D-PSGD} can be viewed as
\[X_{k+1} = X_k W_k -\gamma \partial g(\hat{X}_{k}; \xi_{k}^{i_{k}}, i_k),\]
where $k$ is the iteration number, $x_{k}^{i}$ is the local model of the $i$th
worker at the $k$th iteration, and
\begin{align*}
  X_k = & \left[\begin{array}{ccc} x_{k}^{1} & \cdots & x_{k}^{n} \end{array}\right] \in \mathbb{R}^{N\times n},\\
  \hat{X}_k = & \left[\begin{array}{ccc} \hat{x}_{k}^{1} & \cdots & \hat{x}_{k}^{n} \end{array}\right] \in \mathbb{R}^{N\times n},\\
\iftoggle{icml}{  \partial g (\hat{X}_{k} ; \xi_{k}^{i_{k}}, i_k) = & {\scriptstyle \big[
    0 \enspace \cdots \enspace 0 \enspace \sum_{j=1}^{M}\nabla F (\hat{x}_{k}^{i_{k}}, \xi_{k,j}^{ i_k}) \enspace 0 \enspace \cdots \enspace 0 \big]} \in \mathbb{R}^{N\times n},
}{  \partial g (\hat{X}_{k} ; \xi_{k}^{i_{k}}, i_k) = & \left[\begin{array}{ccccccc}
    0 & \cdots & 0 & \sum_{j=1}^{M}\nabla F (\hat{x}_{k}^{i_{k}}, \xi_{k,j}^{ i_k}) & 0 & \cdots & 0
  \end{array}\right]\in \mathbb{R}^{N\times n},
}
\end{align*}
and $\hat{X}_k= X_{k-\tau_{k}}$ for some nonnegative integer $\tau_{k}$.

\begin{assumption}
\label{ass:20170614-002451}
Throughout this paper, we make the following commonly used assumptions:
  \begin{enumerate}
    \item \textbf{Lipschitzian gradient:} All functions $f_{i}(\cdot)$'s are
      with $L$-Lipschitzian gradients.
    \item \textbf{Doubly stochastic averaging:} $W_k$ is doubly stochastic for all $k$.
    \item \textbf{Spectral gap:} There exists a $\rho \in [0,1)$ such
      that\footnote{A smaller $\rho$ means a faster information spreading in
        the network, leading to a faster convergence.}
      \begin{equation}
      \max\{|\lambda_{2}(\mathbb{E}[W_{k}^\top W_k])|,
      |\lambda_{n}(\mathbb{E}[W_{k}^\top W_k])|\}\le \rho,\forall k.\label{eq:2lkzjsdf}
    \end{equation}
    \item \textbf{Unbiased estimation:}
\footnote{Note that this is easily satisfied when all workers can access all data so
      that $\mathbb{E}_{\xi \sim \mathcal{D}_{i}}\nabla F(x;\xi) = \nabla
      f(x)$. When each worker can only access part of the data, we can also meet
      these assumptions by appropriately distributing data.}
      \begin{align}
        \mathbb{E}_{\xi \sim \mathcal{D}_{i}}\nabla F(x;\xi) &= \nabla f_{i}(x),\label{eq:azjlzj}\\
        \mathbb{E}_{i\sim \mathcal{I}}\mathbb{E}_{\xi \sim \mathcal{D}_{i}}\nabla F(x;\xi) &= \nabla f(x).
      \end{align}
  \item \textbf{Bounded variance:} Assume the variance of the stochastic
      gradient
      \[\mathbb{E}_{i\sim \mathcal{I}}\mathbb{E}_{\xi \sim
          \mathcal{D}_{i}}\|\nabla F(x;\xi) - \nabla f(x)\|^{2}\] is bounded for
      any $x$ with $i$ sampled from the distribution $\mathcal{I}$ and $\xi$
      from the distribution $\mathcal{D}_{i}$. This implies there exist
      constants $\sigma$ and $\varsigma$ such that
      \begin{align}
        \mathbb{E}_{\xi\sim \mathcal{D}_{i}} \| \nabla F(x, \xi) - \nabla f_i (x) \|^2 & \leqslant \sigma^2, \forall i,\forall x .\label{eq:20170613-221110}\\
        \mathbb{E}_{i\sim \mathcal{I}}\|\nabla f_{i}(x) - \nabla f(x)\|^{2}&\leqslant \varsigma^{2},\forall x.\label{eq:1}
      \end{align}
    Note that if all workers can access all data, then $\varsigma = 0$.
     \item \textbf{Dependence of random variables:} $\xi_{k}, i_{k}, k\in \{0, 1, 2, \ldots\}$
       are independent random variables. $W_{k}$ is a random variable dependent
       on $i_{k}$.
     \item \textbf{Bounded staleness:} $\hat{X}_{k}= X_{k-\tau_{k}}$ and there
       exists a constant $T$ such that $\max_k \tau_{k}\leqslant T$.   \end{enumerate}
\end{assumption}

Throughout this paper, we define the following notations for simpler notation
\iftoggle{icml}{
  \begin{small}
    \begin{align*}
      \bar{\rho} := & \frac{n-1}{n}\left(  \frac{1}{1 - \rho} + \frac{2 \sqrt{\rho}}{\left( 1 - \sqrt{\rho} \right)^2}  \right),\\
      C_1 := & 1 - 24 M^{2} L^2 \gamma^2 \left( T \frac{n - 1}{n} + \bar{\rho} \right),\\
      C_2 := & \frac{\gamma M}{2 n} - \frac{\gamma^2 LM^2}{n^2} - \frac{2 M^3 L^2
               T^2 \gamma^3}{n^3} \\ & \hspace{-0.7cm}-  \frac{4\left( \frac{6 \gamma^2 L^3 M^2}{n^2} + \frac{\gamma
               M}{n} L^2 + \frac{12 M^3 L^4 T^2 \gamma^3}{n^3} \right)  M^2
               \gamma^2 (T\frac{n-1}{n}+ \bar{\rho})}{C_1},\\
      C_3 := & \frac{1}{2} + \frac{2 \left( 6 \gamma^2 L^2 M^2 + \gamma nML +
               \frac{12 M^3 L^3 T^2 \gamma^3}{n} \right)  \bar{\rho}}{C_1}  +
               \frac{LT^2 \gamma M}{n} .
    \end{align*}
  \end{small}

}{\begin{align*}
  \bar{\rho} := & \frac{n-1}{n}\left(  \frac{1}{1 - \rho} + \frac{2 \sqrt{\rho}}{\left( 1 - \sqrt{\rho} \right)^2}  \right),\\
  C_1 := & 1 - 24 M^{2} L^2 \gamma^2 \left( T \frac{n - 1}{n} + \bar{\rho} \right),\\
  C_2 := & \frac{\gamma M}{2 n} - \frac{\gamma^2 LM^2}{n^2} - \frac{2 M^3 L^2
            T^2 \gamma^3}{n^3} - \left( \frac{6 \gamma^2 L^3 M^2}{n^2} + \frac{\gamma
            M}{n} L^2 + \frac{12 M^3 L^4 T^2 \gamma^3}{n^3} \right)  \frac{4 M^2
            \gamma^2 (T\frac{n-1}{n}+ \bar{\rho})}{C_1},\\
C_3 := & \frac{1}{2} + {2 {C_1^{-1}} \left( 6 \gamma^2 L^2 M^2 + \gamma nML +
           \frac{12 M^3 L^3 T^2 \gamma^3}{n} \right)  \bar{\rho}} +
           \frac{LT^2 \gamma M}{n} .
\end{align*}
}

Under \Cref{ass:20170614-002451} we have the following results:

\begin{theorem}[Main theorem]\label{thm:main-thm}
  While $C_{3}\leqslant 1$ and $C_{2}\geqslant 0$ and $C_{1}> 0$ are satisfied
  we have
\iftoggle{icml}{\begin{equation*}
  \resizebox{\columnwidth}{!}{$
    \frac{\sum_{k = 0}^{K - 1} \mathbb{E} \left\| \nabla f \left( \frac{X_k
    \mathbf{1}_n}{n} \right) \right\|^2}{K} \leqslant
   \frac{2 (\mathbb{E} f (x_0) - \mathbb{E} f^{\ast}) n}{\gamma KM} + \frac{2 \gamma L(\sigma^2 + 6 M \varsigma^2)}{n}  .
    $}
\end{equation*}
}{
\begin{align*}
  \frac{\sum_{k = 0}^{K - 1} \mathbb{E} \left\| \nabla f \left( \frac{X_k
    \mathbf{1}_n}{n} \right) \right\|^2}{K} \leqslant
    & \frac{2 (\mathbb{E} f (x_0) - \mathbb{E} f^{\ast}) n}{\gamma KM} + \frac{2 \gamma L}{n}  (
      \sigma^2 + 6 M \varsigma^2).
\end{align*}
}
\end{theorem}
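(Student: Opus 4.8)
The plan is to track the evolution of the averaged model $\frac{X_k\mathbf{1}_n}{n}$ and run a descent argument on it, with the genuine difficulty being the simultaneous control of the consensus error (deviation of each local model from the average) and the staleness error. Since each $W_k$ is doubly stochastic it fixes $\mathbf{1}_n$, so multiplying the update $X_{k+1} = X_kW_k - \gamma\,\partial g(\hat X_k;\xi_k^{i_k},i_k)$ on the right by $\mathbf{1}_n/n$ annihilates the averaging step and leaves a plain perturbed-SGD recursion
\[
\frac{X_{k+1}\mathbf{1}_n}{n} = \frac{X_k\mathbf{1}_n}{n} - \frac{\gamma}{n}\sum_{j=1}^M \nabla F(\hat x_k^{i_k};\xi_{k,j}^{i_k}).
\]
First I would apply the $L$-smoothness descent lemma to $f$ along this averaged sequence, take conditional expectation over $i_k$ and $\xi_k$, and invoke the unbiasedness assumption so the expected step direction is $M\sum_i p_i\nabla f_i(\hat x_k^i)$. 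I split the inner-product term via $-\langle a,b\rangle = -\tfrac12\|a\|^2 - \tfrac12\|b\|^2 + \tfrac12\|a-b\|^2$ to extract the target $-\tfrac{\gamma M}{2n}\|\nabla f(\tfrac{X_k\mathbf 1_n}{n})\|^2$, leaving $\tfrac12\|\nabla f(\tfrac{X_k\mathbf 1_n}{n}) - \sum_i p_i\nabla f_i(\hat x_k^i)\|^2$ as an error to be absorbed. The second-moment term $\tfrac{L\gamma^2}{2n^2}\mathbb E\|g_k\|^2$ is decomposed into a variance piece (bounded through $\sigma^2$ and $\varsigma^2$ by the bounded-variance assumption) plus a squared-mean piece that combines with the $-\tfrac12\|b\|^2$ term, the net contribution being controllable for small $\gamma$.

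Next I would bound the two error sources in the residual. Using Lipschitz gradients, $\|\nabla f_i(\hat x_k^i)-\nabla f_i(\tfrac{X_k\mathbf 1_n}{n})\|\le L\|\hat x_k^i - \tfrac{X_k\mathbf 1_n}{n}\|$, and the displacement splits into a consensus part $x_{k-\tau_k}^i - \tfrac{X_{k-\tau_k}\mathbf 1_n}{n}$ and a staleness-drift part $\tfrac{X_{k-\tau_k}\mathbf 1_n}{n} - \tfrac{X_k\mathbf 1_n}{n}$. The staleness drift is a sum of at most $T$ gradient steps of size $O(\gamma)$, hence controlled by recent squared gradients. The consensus error is the crux: writing $\Pi := I - \tfrac1n\mathbf 1_n\mathbf 1_n^\top$ and using $X_0\Pi = 0$ (equal initialization) together with $W_k\Pi = \Pi W_k$, I unroll the recursion to $X_k\Pi = -\gamma\sum_{t=0}^{k-1}\partial g(\hat X_t;\xi_t^{i_t},i_t)\big(\prod_{s=t+1}^{k-1}W_s\big)\Pi$ and expand $\mathbb E\|X_k\Pi\|_F^2$. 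The diagonal terms contract as $\rho^{k-1-t}$ by the spectral-gap assumption, summing to $\tfrac1{1-\rho}$, while the cross terms are bounded by products of half-powers $\sqrt\rho^{\,\cdot}$ and summed as a double geometric series to yield $\tfrac{2\sqrt\rho}{(1-\sqrt\rho)^2}$; together these produce $\bar\rho$, and the resulting bound is expressed through the accumulated $\sum_t\mathbb E\|g_t\|^2$.

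Finally I would assemble everything: sum the per-step inequality over $k=0,\dots,K-1$, telescope the descent term using $f(\tfrac{X_K\mathbf 1_n}{n})\ge f^\ast$, and substitute the consensus and staleness bounds. Because those bounds are themselves expressed in $\sum_k\mathbb E\|g_k\|^2$, which feeds back into the main inequality, the outcome is a self-referential inequality coupling $\sum_k\mathbb E\|g_k\|^2$ and $\sum_k\mathbb E\|\nabla f(\tfrac{X_k\mathbf 1_n}{n})\|^2$. The conditions $C_1>0$, $C_2\ge0$, $C_3\le1$ are precisely what closes this loop: $C_1>0$ forces the coefficient of the fed-back $\sum\mathbb E\|g_k\|^2$ term strictly below one so the recursion is solvable, $C_3\le1$ lets the squared-mean terms be absorbed, and $C_2\ge0$ guarantees the surviving coefficient of $\sum_k\mathbb E\|\nabla f(\tfrac{X_k\mathbf 1_n}{n})\|^2$ is at least $\tfrac{\gamma M}{2n}$; dividing through and multiplying by $\tfrac{2n}{\gamma MK}$ then gives the stated bound, whose right-hand side no longer contains $\rho$ or $T$. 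I expect the consensus-error expansion under asynchrony and staleness to be the main obstacle, since the gradient noise injected at different iterations passes through products of random averaging matrices $W_s$ that are statistically dependent on the sampled worker $i_s$, so it is the cross-term estimates — not the routine descent algebra — where the real work lies.
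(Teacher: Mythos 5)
Your proposal is correct and follows essentially the same route as the paper's proof: the descent lemma on the averaged iterate with the inner product split via $-\langle a,b\rangle=\tfrac12\|a-b\|^2-\tfrac12\|a\|^2-\tfrac12\|b\|^2$, the decomposition of the residual into a staleness drift (at most $T$ gradient steps, the paper's \Cref{lemma:20170614-115656}) plus a consensus error, the unrolled spectral-gap contraction with separate diagonal and cross-term estimates yielding $\bar\rho=\frac{n-1}{n}\bigl(\frac{1}{1-\rho}+\frac{2\sqrt\rho}{(1-\sqrt\rho)^2}\bigr)$ (the paper's \Cref{lemma:zhzlxdfmlqwr,lemma:20170614-010128}), and the self-referential inequality closed by $C_1>0$, $C_2\geqslant 0$, $C_3\leqslant 1$. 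The only differences are cosmetic --- you phrase the consensus error via the projection $I-\tfrac1n\mathbf{1}_n\mathbf{1}_n^\top$ where the paper uses $p_i$-weighted per-column deviations $M_k$, and you slightly swap the roles of $C_2$ (which actually kills the $\|\partial f(\hat X_k)\mathbf{1}_n/n\|^2$ term) and $C_3$ (which bounds the variance coefficient) --- neither of which affects the argument.
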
  Noting that $\frac{X_{n}\mathbf{1}_{n}}{n} =
\frac{1}{n}\sum_{i=1}^{n}x_{k}^{i}$, this theorem characterizes the convergence
of the average of all local models. By appropriately choosing the learning rate,
we obtain the following corollary
\begin{corollary}
  \label{coro:Sun Aug 13 00:14:48 EDT 2017}
  Let $\gamma = \frac{n}{10ML + \sqrt{\sigma^2 + 6 M \varsigma^2} \sqrt{KM}}$.
  We have the following convergence rate

\iftoggle{icml}{{\small \begin{align} &\frac{\sum_{k = 0}^{K - 1} \mathbb{E} \left\| \nabla f \left( \frac{X_k
   \mathbf{1}_n}{n} \right) \right\|^2}{K}\nonumber \\ \hspace{-5mm} \leqslant & \frac{20 ( f
   (x_0) -  f^{\ast}) L}{K} + \frac{2 (f (x_0) -
    f^{\ast} + L)  \sqrt{\sigma^2/M + 6 \varsigma^2}}{\sqrt{K}}\label{eq:2}
\end{align}}
}{\begin{align} \frac{\sum_{k = 0}^{K - 1} \mathbb{E} \left\| \nabla f \left( \frac{X_k
   \mathbf{1}_n}{n} \right) \right\|^2}{K} \leqslant& \frac{20 ( f
   (x_0) -  f^{\ast}) L}{K} + \frac{2 (f (x_0) -
    f^{\ast} + L)  \sqrt{\sigma^2/M + 6 \varsigma^2}}{\sqrt{K}}\label{eq:2}
\end{align}
}
if the total number of iterations is sufficiently large, in particular,
\iftoggle{icml}{{\small \begin{align}
 K &\geqslant \frac{ML^2 n^2}{\sigma^2 + 6 M \varsigma^2}
  \max \Bigg\{192 \left( T \frac{n - 1}{n} + \bar{\rho} \right), \frac{64 T^4}{n^2},\label{eq:3} \\
    & 1024 n^2  \bar{\rho}^2, \frac{\left( 8 \sqrt{6} T^{2 / 3} + 8 \right)^2  \left( T + \bar{\rho}
    \frac{n}{n - 1} \right)^{2 / 3}(n - 1)^{1 / 2}}{n^{1 / 6} } \Bigg\} .\nonumber
\end{align} }
}{\begin{equation}
K \geqslant \frac{ML^2 n^2}{\sigma^2 + 6 M \varsigma^2}
  \max \left\{ \begin{array}{c}
    192 \left( T \frac{n - 1}{n} + \bar{\rho} \right), \frac{64 T^4}{n^2},
    1024 n^2  \bar{\rho}^2, \frac{\left( 8 \sqrt{6} T^{2 / 3} + 8 \right)^2  \left( T + \bar{\rho}
    \frac{n}{n - 1} \right)^{2 / 3}(n - 1)^{1 / 2}}{n^{1 / 6} }
  \end{array} \right\} .\label{eq:3}
\end{equation}
}
\end{corollary}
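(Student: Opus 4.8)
The plan is to obtain the corollary from \Cref{thm:main-thm} in two stages. In the first stage I substitute the prescribed step size into the bound of the main theorem and simplify; in the second stage I check that this step size makes the three structural hypotheses $C_1>0$, $C_2\geqslant 0$ and $C_3\leqslant 1$ hold precisely when $K$ obeys \eqref{eq:3}, so that \Cref{thm:main-thm} is applicable.

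For the first stage, write $\gamma=\frac{n}{10ML+\sqrt{\sigma^2+6M\varsigma^2}\sqrt{KM}}$ so that $\frac{n}{\gamma}=10ML+\sqrt{\sigma^2+6M\varsigma^2}\sqrt{KM}$. The first term of the main bound then splits exactly as
\[
\frac{2(f(x_0)-f^*)n}{\gamma KM}=\frac{20(f(x_0)-f^*)L}{K}+\frac{2(f(x_0)-f^*)\sqrt{\sigma^2/M+6\varsigma^2}}{\sqrt{K}},
\]
using $\sqrt{\sigma^2+6M\varsigma^2}\sqrt{KM}/(KM)=\sqrt{\sigma^2/M+6\varsigma^2}/\sqrt{K}$. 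For the second term I use that the denominator of $\gamma$ is at least $\sqrt{\sigma^2+6M\varsigma^2}\sqrt{KM}$, whence $\frac{2\gamma L}{n}(\sigma^2+6M\varsigma^2)\leqslant \frac{2L\sqrt{\sigma^2/M+6\varsigma^2}}{\sqrt K}$. Adding the two $1/\sqrt K$ pieces and keeping the $1/K$ piece yields \eqref{eq:2}. This stage is routine algebra.

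The substance is the second stage, for which the one fact I need is the crude bound $\gamma\leqslant \frac{n}{\sqrt{(\sigma^2+6M\varsigma^2)KM}}$ (obtained by discarding the $10ML$ summand), so that $\gamma^2\leqslant \frac{n^2}{(\sigma^2+6M\varsigma^2)KM}$ and $\gamma$ scales like $1/\sqrt K$. Each of the four arguments of the $\max$ in \eqref{eq:3} is then matched to one requirement. The first argument $192(T\frac{n-1}{n}+\bar\rho)$ forces $24M^2L^2\gamma^2(T\frac{n-1}{n}+\bar\rho)\leqslant \frac18$, hence $C_1\geqslant \frac78>0$; this lower bound on $C_1$ is reused to control the denominators $C_1$ inside $C_2$ and $C_3$. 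The condition $C_3\leqslant 1$ reduces to showing that the two leading $O(\gamma)$ contributions in $C_3$ — namely $\frac{2\gamma nML\bar\rho}{C_1}$, controlled by the $1024n^2\bar\rho^2$ argument, and $\frac{LT^2\gamma M}{n}$, controlled by the $\frac{64T^4}{n^2}$ argument — together with the higher-order $\gamma^2,\gamma^3$ pieces sum to at most $\frac12$.

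The main obstacle is establishing $C_2\geqslant 0$, since $C_2$ is a signed expression whose leading positive term $\frac{\gamma M}{2n}$ is only of order $\gamma$, while the subtracted terms are of orders $\gamma^2$, $\gamma^3$ and $\gamma^2/C_1$. Dividing the inequality $C_2\geqslant 0$ through by $\frac{\gamma M}{n}$ turns it into a requirement that a sum of powers of $\gamma$ be at most $\frac12$, and I would bound each such power separately by a fixed fraction of $\frac12$; the $\gamma^2$ spectral piece is again absorbed by the first argument of the $\max$, while the most delicate contribution is an order-$\gamma^3$ term carrying the staleness-and-spectral factor $T\frac{n-1}{n}+\bar\rho$. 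This last term is exactly what the composite fourth argument of \eqref{eq:3} is engineered to dominate: the fractional $2/3$ exponents and the $T^{2/3}$, $(n-1)^{1/2}$, $n^{-1/6}$ factors appearing there are the fingerprint of bounding a cubic-in-$\gamma$ quantity by a constant and then translating $\gamma\lesssim 1/\sqrt K$ into a lower bound on $K$ (a $\gamma^3$ budget produces a $2/3$ power, whereas a $\gamma^2$ budget would produce a $1/2$ power). I expect the careful bookkeeping of constants — so that all the fractions genuinely sum to less than one and the composite argument really does cover the $\gamma^3$ term after the cube-root conversion — to be the delicate part, while everything else follows by inserting the $1/\sqrt K$ scaling of $\gamma$ into the definitions of $C_1$, $C_2$, $C_3$.
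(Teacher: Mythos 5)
Your proposal is correct and follows essentially the same route as the paper: substitute the prescribed $\gamma$ into the bound of \Cref{thm:main-thm} (splitting the first term via $n/\gamma = 10ML+\sqrt{\sigma^2+6M\varsigma^2}\sqrt{KM}$ and bounding the second via $\gamma\leqslant n/\sqrt{(\sigma^2+6M\varsigma^2)KM}$), then verify $C_1\geqslant\tfrac12$, $C_2\geqslant 0$, $C_3\leqslant 1$ by bounding each constituent term by a fixed fraction and converting the resulting step-size ceilings into the four arguments of the $\max$ in \eqref{eq:3}. Your matching of each $\max$ argument to its role — the spectral one to $C_1$, the $1024n^2\bar\rho^2$ and $64T^4/n^2$ ones to the order-$\gamma$ pieces of $C_3$, and the composite $2/3$-exponent one to the cubic-in-$\gamma$ terms — is exactly how the paper's consolidated bound $U$ is assembled.
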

This corollary indicates that if the iteration number is big enough, AD-PSGD's
convergence rate is $O(1/\sqrt{K})$. We compare the convergence rate of AD-PSGD
with existing results for SGD and D-PSGD to show the tightness of the proved
convergence rate. We will also show the efficiency and the linear speedup
property for AD-PSGD w.r.t. batch size, number of workers, and staleness
respectively. Further discussions on communication topology and intuition will
be provided at the end of this section.
\begin{remark}[Consistency with SGD]
  Note that if $T=0$ and $n=1$ the proposed AD-PSGD reduces to the vanilla SGD
  algorithm \cite{nemirovski2009robust, moulines2011non, ghadimi2013stochastic}.
  Since $n=1$, we do not have the variance among workers, that is, $\varsigma =
  0$, the convergence rate becomes $O(1/K + \sigma / \sqrt{KM})$ which is
  consistent with the convergence rate with SGD.
\end{remark}

\begin{remark}[Linear speedup w.r.t. batch size]
  When $K$ is large enough the second term on the RHS of \eqref{eq:2} dominates
  the first term. Note that the second term converges at a rate $O(1/\sqrt{MK})$
  if $\varsigma = 0$, which means the convergence efficiency gets boosted with a
  linear rate if increase the mini-batch size. This observation indicates the
  linear speedup w.r.t. the batch size and matches the results of mini-batch
  SGD. \footnote{ Note that when $\varsigma^{2} \neq 0$, AD-PSGD does not admit
    this linear speedup w.r.t. batch size. It is unavoidable because increasing
    the minibatch size only decreases the variance of the stochastic gradients
    within each worker, while $\varsigma^{2}$ characterizes the variance of
    stochastic gradient among different workers, independent of the batch size.
  }\end{remark}

\begin{remark}[Linear speedup w.r.t. number of workers]
  Note that every single stochastic gradient update counts one iteration in our
  analysis and our convergence rate in \Cref{coro:Sun Aug 13 00:14:48 EDT 2017}
  is consistent with SGD / mini-batch SGD. It means that the number of required
  stochastic gradient updates to achieve a certain precision is consistent with
  SGD / mini-batch SGD, as long as the total number of iterations is large
  enough. It further indicates the linear speedup with respect to the number of
  workers $n$ ($n$ workers will make the iteration number advance $n$ times
  faster in the sense of wall-clock time, which means we will converge $n$ times
  faster). To the best of our knowledge, the linear speedup property w.r.t. to
  the number of workers for decentralized algorithms has not been recognized
  until the recent analysis for D-PSGD by \citet{1705.09056}. Our analysis
  reveals that by breaking the synchronization AD-PSGD can maintain linear
  speedup, reduce the idle time, and improve the robustness in heterogeneous
  computing environments.
\end{remark}

\begin{remark}[Linear speedup w.r.t. the staleness]
  From \eqref{eq:3} we can also see that as long as the staleness $T$ is bounded
  by $O(K^{1/4})$ (if other parameters are considered to be constants), linear
  speedup is achievable.
\end{remark}

\section{Experiments}
\label{sec:exp}

We describe our experimental methodologies in \Cref{sec:exp:meth} and we evaluate
the AD-PSGD algorithm in the following sections:
\begin{itemize}
\item \Cref{sec:exp:conv}: Compare AD-PSGD's convergence rate (w.r.t epochs) with other algorithms.
\item \Cref{sec:exp:speedup}: Compare AD-PSGD's convergence rate (w.r.t runtime) and its speedup with other algorithms.
\item \Cref{sec:exp:robustness}: Compare AD-PSGD's robustness to other algorithms in heterogeneous
  computing and heterogeneous communication environments.
\item \Cref{sec:nlc-experiments}: Evaluate AD-PSGD on IBM proprietary natural
  language processing dataset and model.
\end{itemize}

\subsection{Experiments methodology}
\label{sec:exp:meth}
\subsubsection{Dataset, model, and software}

We use CIFAR10 and ImageNet-1K as the evaluation dataset and we use Torch-7 as our deep
learning framework. We use MPI to implement the communication scheme. For CIFAR10, we evaluate both VGG~\citep{vgg} and ResNet-20~\citep{he2016deep}
models. VGG, whose size is about 60MB, represents a communication intensive workload and
ResNet-20, whose size is about 1MB, represents a computation intensive workload.
For the ImageNet-1K dataset, we use the ResNet-50 model whose size is about 100MB.

Additionally, we experimented on an IBM proprietary natural language processing
datasets and models \cite{gadei} in \Cref{sec:nlc-experiments}.

\subsubsection{Hardware}

We evaluate AD-PSGD in two different environments:

\begin{itemize}
\item IBM S822LC HPC cluster: Each node with 4 Nvidia P100 GPUs, 160 Power8
  cores (8-way SMT) and 500GB memory on each node. 100Gbit/s Mellanox EDR
  infiniband network. We use 32 such nodes.
\item x86-based cluster: This cluster is a cloud-like environment with 10Gbit/s
  ethernet connection. Each node has 4 Nvidia P100 GPUs, 56 Xeon E5-2680 cores
  (2-way SMT), and 1TB DRAM. We use 4 such nodes.
\end{itemize}

\subsubsection{Compared algorithms}

We compare the proposed AD-PSGD algorithm to AllReduce-SGD, D-PSGD
\cite{1705.09056} and a state of the art asynchronous SGD implementation EAMSGD.
\cite{eamsgd}\footnote{In this paper, we use ASGD and EAMSGD interchangeably.}
In EAMSGD, each worker can communicate with the parameter server less frequently
by increasing the ``communication period'' parameter $su$.

\subsection{Convergence w.r.t. epochs}
\label{sec:exp:conv}

\begin{table}
\centering
\caption{Testing accuracy comparison for VGG and ResNet-20 model on CIFAR10. 16
  workers in
  total. }
\label{tab:final-accuracy}
\iftoggle{icml}{\resizebox{0.8\linewidth}{!}{\begin{tabular}{|l|l|l|l|l|l|}
\hline
& AllReduce & D-PSGD & EAMSGD &AD-PSGD \\
  \hline
VGG & 87.04\% & 86.48\% & 85.75\%& 88.58\% \\
  \hline
ResNet-20 & 90.72\% & 90.81\% & 89.82\% & 91.49\% \\
  \hline
\end{tabular}}
}{\begin{tabular}{|l|l|l|l|l|l|}
\hline
& AllReduce & D-PSGD & EAMSGD &AD-PSGD \\
  \hline
VGG & 87.04\% & 86.48\% & 85.75\%& 88.58\% \\
  \hline
ResNet-20 & 90.72\% & 90.81\% & 89.82\% &91.49\% \\
  \hline
\end{tabular}
}
\end{table}
\paragraph{CIFAR10}
\Cref{fig:loss} plots training loss w.r.t. epochs for each algorithm, which is
evaluated for VGG and ResNet-20 models on CIFAR10 dataset with 16 workers.
\Cref{tab:final-accuracy} reports the test accuracy of all algorithms.

For EAMSGD, we did extensive hyper-parameter tuning to get the best possible
model, where $su=1$. We set momentum moving average to be $0.9/n$ (where $n$ is the number of
workers) as recommended in \cite{eamsgd} for EAMSGD.

For other algorithms, we use the following hyper-parameter setup as prescribed
in \cite{cifar-torch} and \cite{fb.resnet}:

\begin{itemize}
\item Batch size: 128 per worker for VGG, 32 for ResNet-20.
\item Learning rate: For VGG start from 1 and reduce by half every 25 epochs.
  For ResNet-20 start from 0.1 and decay by a factor of 10 at the 81st epoch and
  the 122nd epoch.
\item Momentum: 0.9.
\item Weight decay: $10^{-4}$.
\end{itemize}

\emph{\Cref{fig:loss} show that w.r.t epochs, AllReduce-SGD, D-PSGD and AD-PSGD
  converge similar, while ASGD converges worse. \Cref{tab:final-accuracy} shows
  AD-PSGD does not sacrifice test accuracy. }

\iftoggle{icml}{\begin{figure}[t]\small
    \centering
    \subfloat[{\small VGG loss}]{{ \includegraphics[width=0.5\columnwidth]{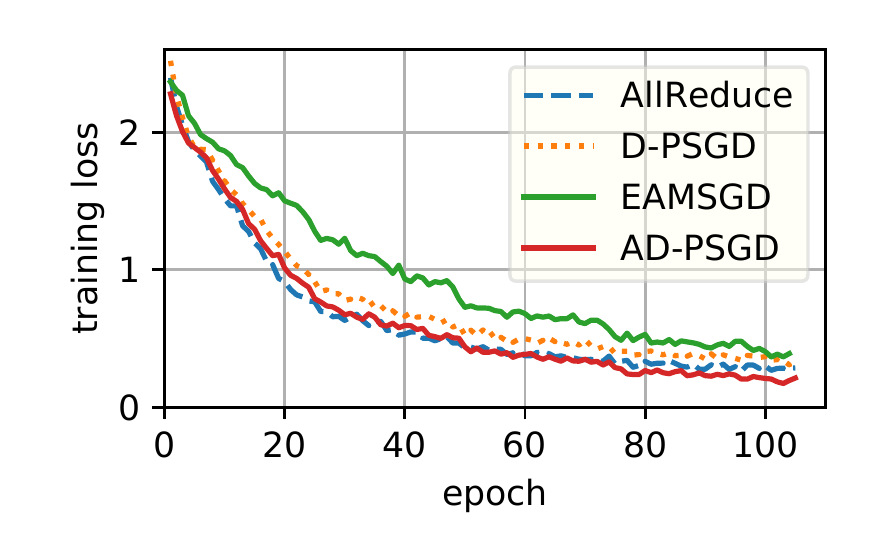} }\label{fig:cifar-vgg-loss}}    \subfloat[{\small ResNet-20 loss}]{{ \includegraphics[width=0.5\columnwidth]{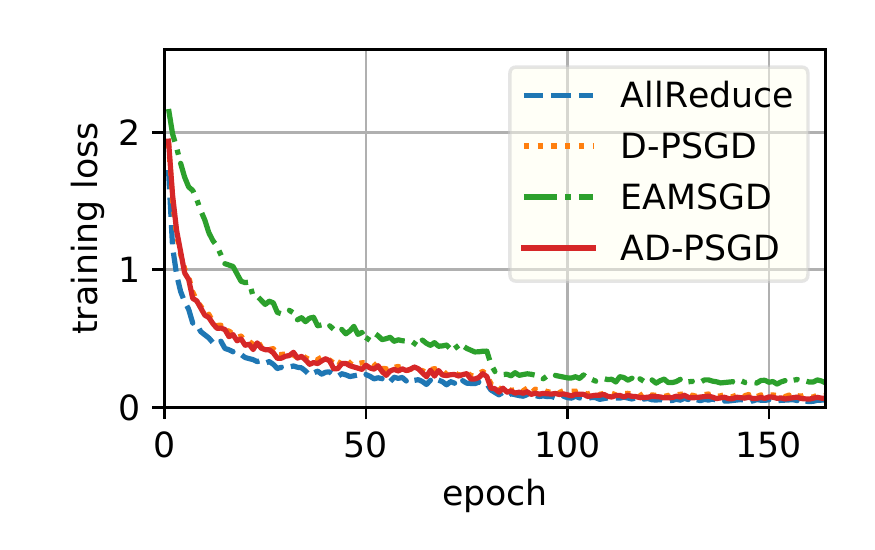} }\label{fig:cifar-resnet20-loss}}    \caption{\small Training loss comparison for VGG and ResNet-20 model on CIFAR10. AllReduce-SGD, D-PSGD and AD-PSGD converge alike, EAMSGD converges the worst. 16 workers in total.}
    \label{fig:loss}
\end{figure}
}{\begin{figure*}[t]\small
    \centering
    \subfloat[{\small VGG loss}]{{ \includegraphics[width=0.3\textwidth]{./figures/plot/vgg} }\label{fig:cifar-vgg-loss}}    \subfloat[{\small ResNet-20 loss}]{{ \includegraphics[width=0.3\textwidth]{./figures/plot/resnet-bs32} }\label{fig:cifar-resnet20-loss}}    \caption{\small Training loss comparison for VGG and ResNet-20 model on CIFAR10. AllReduce-SGD, D-PSGD and AD-PSGD converge alike, EAMSGD converges the worst. 16 workers in total.}
    \label{fig:loss}
\end{figure*}
}

\iftoggle{icml}{
\begin{table}
\centering
\caption{Testing accuracy comparison for ResNet-50 model on ImageNet dataset for
  AllReduce, D-PSGD, and AD-PSGD. The ResNet-50 model is trained for 90 epochs.
  AD-PSGD and AllReduce-SGD achieve similar model accuracy.}
\label{tab:imagenet-accuracy}
  \resizebox{0.8\columnwidth}{!}{
\begin{tabular}{|l|l|l|l|}
\hline
& AllReduce & D-PSGD & AD-PSGD \\
  \hline
  16 Workers & 74.86\% & 74.74\% & 75.28\% \\
  \hline
  32 Workers & 74.78\% & 73.66\% & 74.66\%\\
  \hline
  64 Workers & 74.90\% & 71.18\% & 74.20\%\\
  \hline
  128 Workers & 74.78\% & 70.90\% & 74.23\% \\
  \hline
\end{tabular}}
\end{table}
}{

\begin{table}[t]
\centering
\caption{Testing accuracy comparison for ResNet-50 model on ImageNet dataset for
  AllReduce, D-PSGD, and AD-PSGD. The ResNet-50 model is trained for 90 epochs.
  AD-PSGD and AllReduce-SGD achieve similar model accuracy.}
\label{tab:imagenet-accuracy}
\begin{tabular}{|l|l|l|l|}
\hline
& AllReduce & D-PSGD & AD-PSGD \\
  \hline
  16 Workers & 74.86\% & 74.74\% & 75.28\% \\
  \hline
  32 Workers & 74.78\% & 73.66\% & 74.66\%\\
  \hline
  64 Workers & 74.90\% & 71.18\% & 74.20\%\\
  \hline
  128 Workers & 74.78\% & 70.90\% & 74.23\% \\
  \hline
\end{tabular}
\end{table}
}

\paragraph{ImageNet} We further evaluate the AD-PSGD's convergence rate w.r.t.
epochs using ImageNet-1K and ResNet-50 model. We compare AD-PSGD with
AllReduce-SGD and D-PSGD as they tend to converge better than A-PSGD.

\Cref{fig:imagenet} and \Cref{tab:imagenet-accuracy} demonstrate that w.r.t.
epochs AD-PSGD converges similarly to AllReduce and converges better than D-PSGD
when running with 16,32,64,128 workers. How to maintain convergence while
increasing $M \times n$\footnote{$M$ is mini-batch size per worker and $n$ is
  the number of workers} is an active ongoing research area
\cite{zhang2016icdm,facebook-1hr} and it is orthogonal to the topic of this
paper. For 64 and 128 workers, we adopted similar learning rate tuning scheme as
proposed in \citet{facebook-1hr} (i.e., learning rate warm-up and linear
scaling)\footnote{In AD-PSGD, we decay the learning rate every 25 epochs instead
  of 30 epochs as in AllReduce.} \emph{It worths noting that we could further
  increase the scalability of AD-PSGD by combining learners on the same
  computing node as a super-learner (via Nvidia NCCL AllReduce collectives). In
  this way, a 128-worker system can easily scale up to 512 GPUs or more,
  depending on the GPU count on a node. }

{\em
  Above results show AD-PSGD converges similarly to AllReduce-SGD w.r.t epochs and
  better than D-PSGD. Techniques used for tuning learning rate for AllReduce-SGD
  can be applied to AD-PSGD when batch size is large.
}

\subsection{Speedup and convergence w.r.t runtime}
\label{sec:exp:speedup}
\iftoggle{icml}{\begin{figure}
\small
    \centering
        { \includegraphics[width=\columnwidth]{./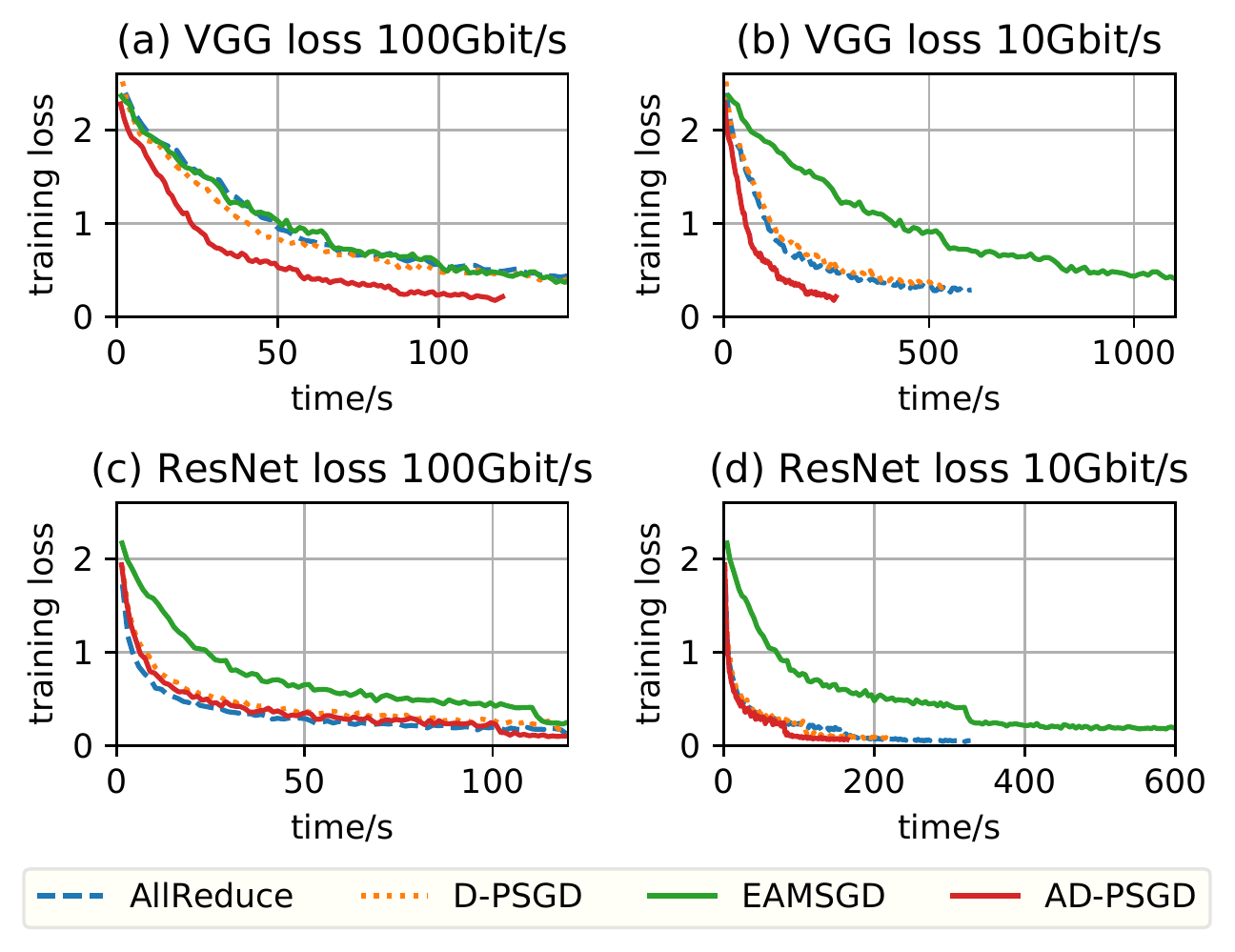}

          \caption{Runtime comparison for VGG (communication intensive) and
            ResNet-20 (computation intensive) models on CIFAR10. Experiments run
            on IBM HPC w/ 100Gbit/s network links and on x86 system w/ 10Gbit/s
            network links. AD-PSGD consistently converges the fastest. 16
            workers in total.}
          \label{fig:runtimes}
        }

\end{figure}
}{\begin{figure*}[t]\small
    \centering
        { \includegraphics[width=\textwidth]{./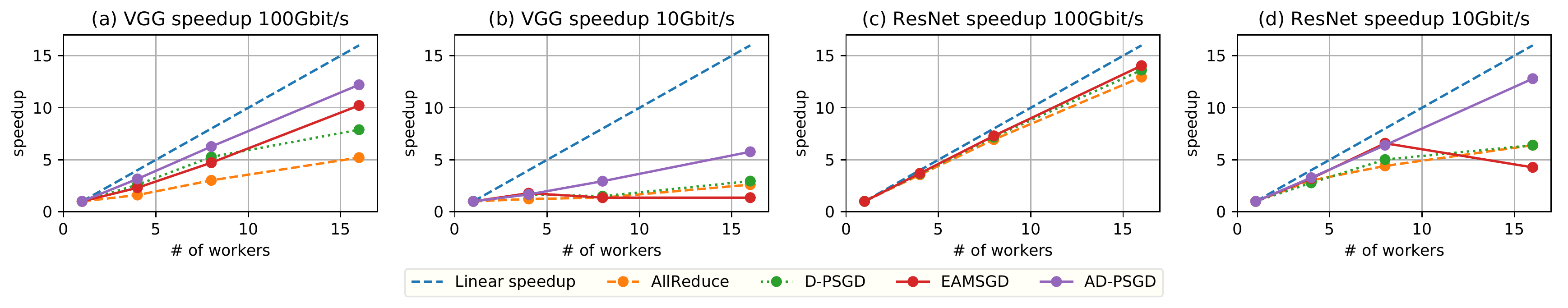}

          \caption{Runtime comparison for VGG (communication intensive) and
            ResNet-20 (computation intensive) models on CIFAR10. Experiments run
            on IBM HPC w/ 100Gbit/s network links and on x86 system w/ 10Gbit/s
            network links. AD-PSGD consistently converges the fastest. 16
            workers in total.}
          \label{fig:runtimes}
        }
\end{figure*}
}

\begin{figure*}[t]
\small
    \centering
    \subfloat[{\small 16 workers }]{{ \includegraphics[width=0.18\textwidth]{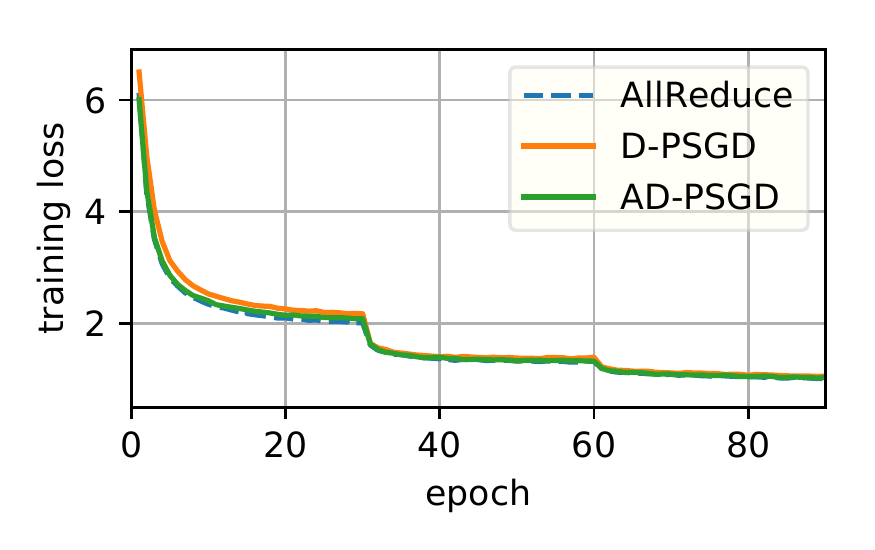} }\label{fig:imagenet-resnet50-16L}}
    \subfloat[{\small 32 workers}]{{ \includegraphics[width=0.18\textwidth]{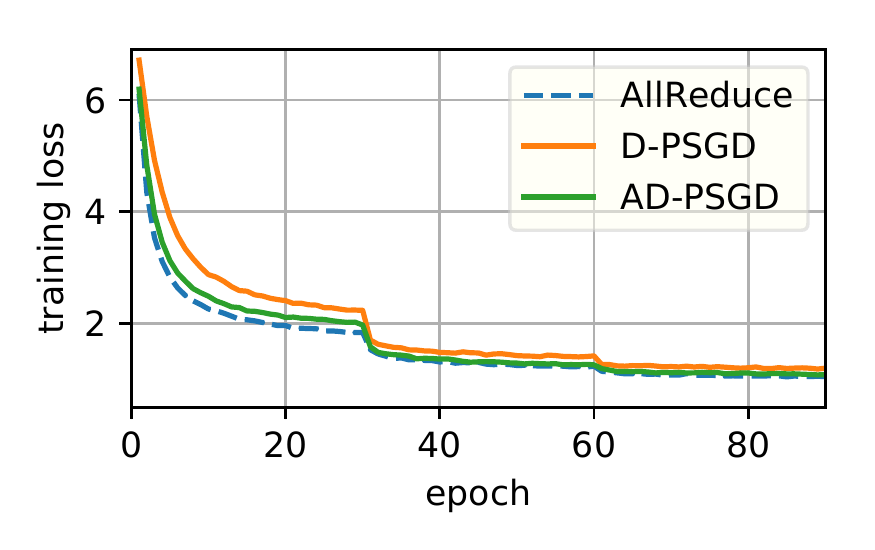} }\label{fig:imagenet-resnet50-32L}}
    \subfloat[{\small 64 workers}]{{ \includegraphics[width=0.18\textwidth]{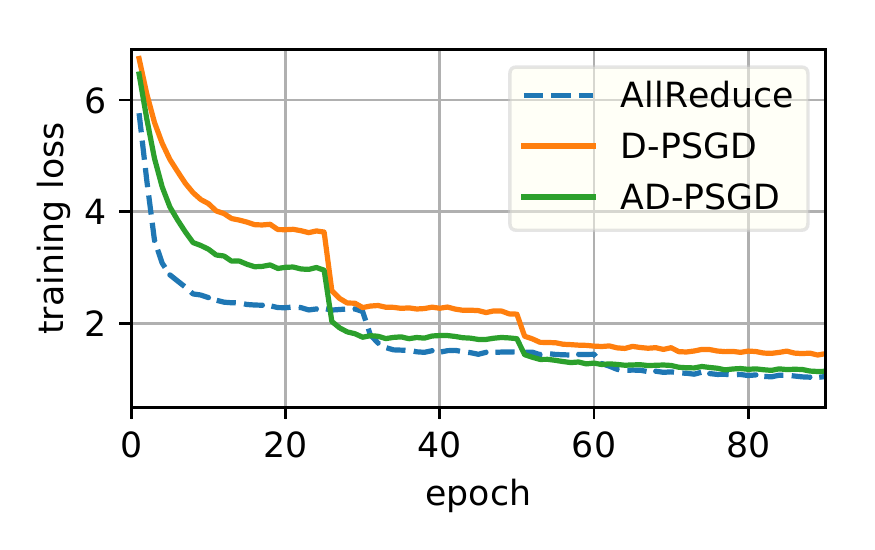} }\label{fig:imagenet-resnet50-64L}}
    \subfloat[{\small 128 workers}]{{ \includegraphics[width=0.18\textwidth]{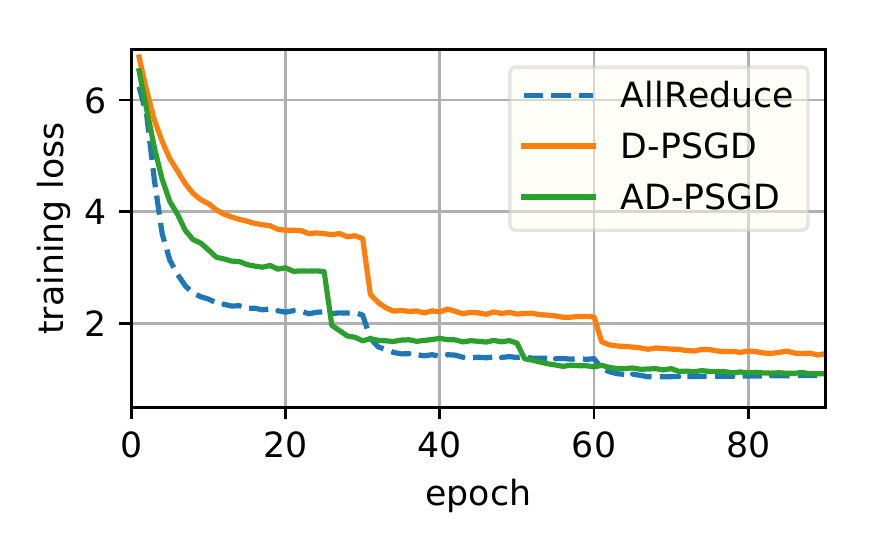} }\label{fig:imagenet-resnet50-1288L}}
    \subfloat[{\small training time variation (64 workers)}]{{ \includegraphics[width=0.18\textwidth]{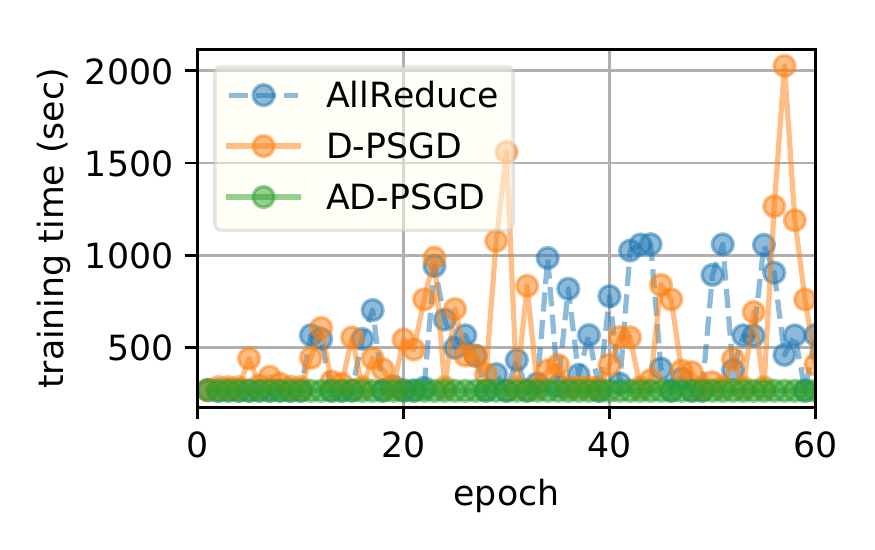} }\label{fig:time-variance-imagenet}}
    \caption{\small Training loss and training time per epoch comparison for
      ResNet-50 model on ImageNet dataset, evaluated up to 128 workers. AD-PSGD
      and AllReduce-SGD converge alike, better than D-PSGD. For 64 workers AD-PSGD finishes
      each epoch in 264 seconds, whereas AllReduce-SGD and D-PSGD can take over
      1000 sec/epoch. }
    \label{fig:imagenet}
\end{figure*}

\iftoggle{icml}{\begin{figure*}[t]
\small
    \centering
        { \includegraphics[width=1.8\columnwidth]{./figures/plot/speedups.pdf}

          \caption{Speedup comparison for VGG (communication intensive) and
            ResNet-20 (computation intensive) models on CIFAR10. Experiments run
            on IBM HPC w/ 100Gbit/s network links and on x86 system w/ 10Gbit/s
            network links. AD-PSGD consistently achieves the best speedup.}
          \label{fig:speedups}
        }\end{figure*}
}{\begin{figure*}[t]\small
    \centering
        { \includegraphics[width=\textwidth]{./figures/plot/speedups.pdf}

          \caption{Speedup comparison for VGG (communication intensive) and ResNet-20 (computation intensive) models on CIFAR10. Experiments run on IBM HPC w/ 100Gbit/s network links and on x86 system w/ 10Gbit/s network links. AD-PSGD consistently achieves the best speedup.}
          \label{fig:speedups}        }
\end{figure*}
}

On CIFAR10, \Cref{fig:runtimes} shows the runtime convergence results on both
IBM HPC and x86 system. The EAMSGD implementation deploys parameter server
sharding to mitigate the network bottleneck at the parameter servers. However,
the central parameter server quickly becomes a bottleneck on a slow network with
a large model as shown in \Cref{fig:runtimes}-(b).

\Cref{fig:speedups} shows the speedup for different algorithms w.r.t. number of
workers. The speedup for ResNet-20 is better than VGG because ResNet-20 is a
computation intensive workload.

\emph{Above results show that regardless of workload type (computation intensive
  or communication intensive) and communication networks (fast or slow), AD-PSGD
  consistently converges the fastest w.r.t. runtime and achieves the best
  speedup. }

\subsection{Robustness in a heterogeneous environment}
\label{sec:exp:robustness}
In a heterogeneous environment, the speed of computation device and communication device may often vary, subject
to architectural features (e.g., over/under-clocking, caching, paging),
resource-sharing (e.g., cloud computing) and hardware malfunctions. Synchronous
algorithms like AllReduce-SGD and D-PSGD perform poorly when workers'
computation and/or communication speeds vary. Centralized asynchronous
algorithms, such as A-PSGD, do poorly when the parameter server's network links
slow down. In contrast, AD-PSGD localizes the impact of slower workers or
network
links. 

On ImageNet, \Cref{fig:time-variance-imagenet} shows the epoch-wise training
time of the AD-PSGD, D-PSGD and AllReduce run over 64 GPUs (16 nodes) over a
reserved window of 10 hours when the job shares network links with other jobs on
IBM HPC. AD-PSGD finishes each epoch in 264 seconds, whereas AllReduce-SGD and
D-PSGD can take over 1000 sec/epoch.

We then evaluate AD-PSGD's robustness under different situations by randomly
slowing down 1 of the 16 workers and its incoming/outgoing network links. Due to
space limit, we will discuss the results for ResNet-20 model on CIFAR10 dataset
as the VGG results are similar.

\paragraph{Robustness against slow computation}
\iftoggle{icml}{\begin{figure}[t]\small
    \centering
    \subfloat[A computation device slows down by
    10X-100X.]{{ \includegraphics[width=0.45\columnwidth]{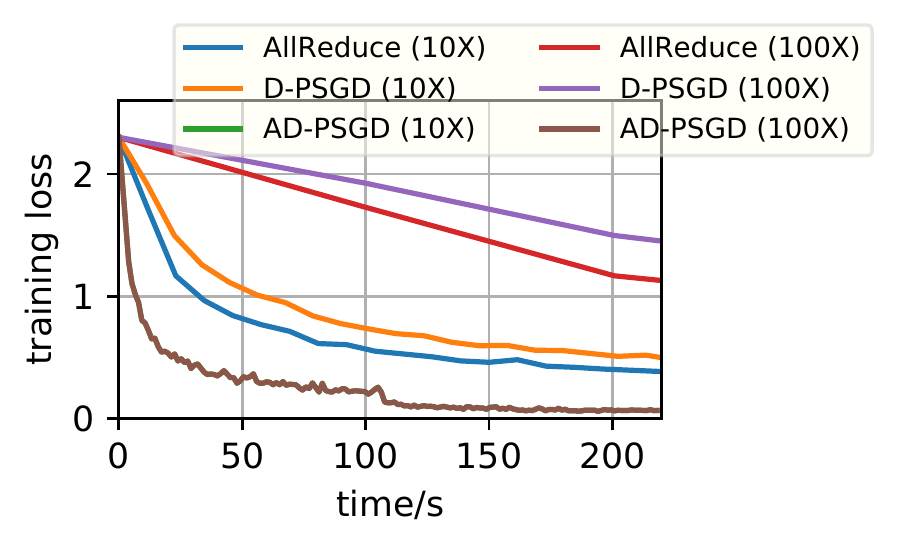}
      }\label{fig:loss-slower-worker}}\quad     \subfloat[A network link slows down by
    2X-100X.]{{ \includegraphics[width=0.45\columnwidth]{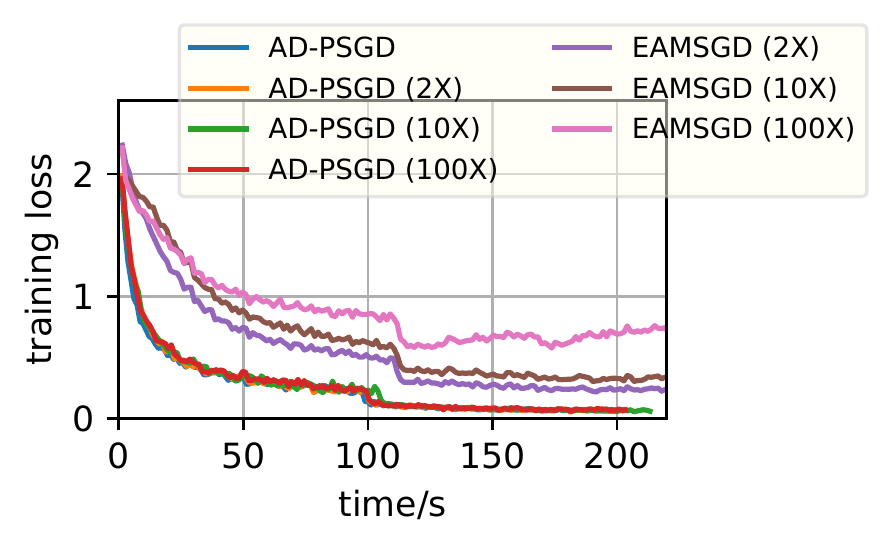}
      }\label{fig:loss-link}}    \\
    \caption{\small Training loss for ResNet-20 model on CIFAR10, when a
      computation device/network link slows down by 2X-100X. $k$X in parentheses means a random worker's GPU or network links slow down by $k$-times. 16 workers in
      total. AD-PSGD is robust under such heterogeneous environments.}
\end{figure}

}{\begin{figure*}[t]\small
    \centering
   \subfloat[AD-PSGD converges steadily w.r.t. epochs despite a slower computing device.]{{ \includegraphics[width=0.3\textwidth]{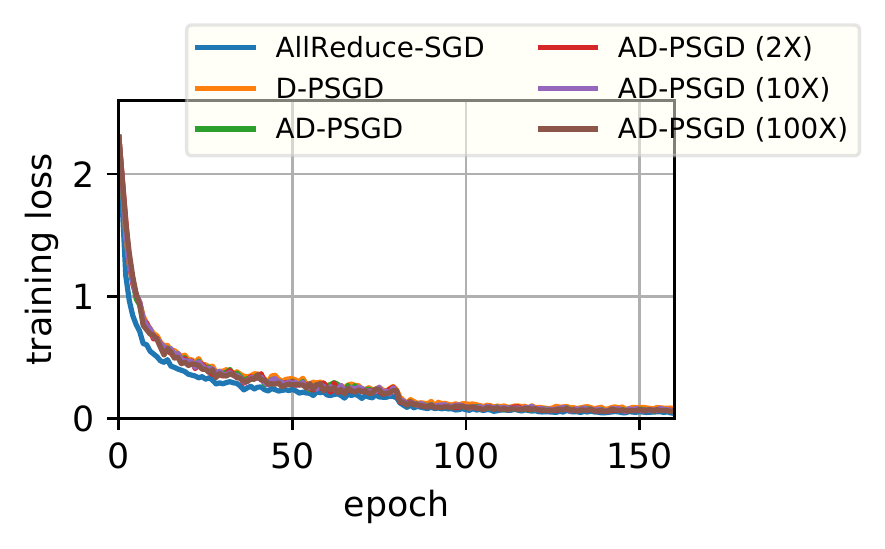} }\label{fig:slower-loss-epoch}} \quad
   \subfloat[AD-PSGD converges much faster than AllReduce-SGD and D-PSGD in the presence of a slower computing device.]{{ \includegraphics[width=0.3\textwidth]{./figures/plot/resnet-slower-runtime} }\label{fig:slower-loss-runtime}} \\
 \caption{\small Training loss for ResNet-20 model on CIFAR10 w.r.t (a) epochs
   and (b) runtime, when a computation device slows down by 2X-100X. 16 workers
   in total.}
    \label{fig:loss-slower-worker}
\end{figure*}
\begin{figure}[t]  \small
  \centering

    {\includegraphics[width=0.4\columnwidth]{./figures/plot/resnet-slower-link-runtime}      \caption{\small Training loss for ResNet-20 on CIFAR10 w.r.t. runtime,
        when a network link slows down by 2X-100X.       }
    \label{fig:loss-link}

    }
\end{figure}
}

\iftoggle{icml}{

      \begin{table}
        \centering
        \caption{Runtime comparison for ResNet-20 model on CIFAR10 dataset when
          a worker slows down by 2X-100X.}
        \label{tab:speedup_slow_node}

\resizebox{\linewidth}{!}{
        \begin{tabular}{|l|l|l|l|l|}
          \hline
          \multirow{2}{*}{\begin{tabular}[c]{@{}l@{}}Slowdown of\\ one node\end{tabular}} & \multicolumn{2}{l|}{AD-PSGD} & \multicolumn{2}{l|}{AllReduce/D-PSGD}         \\ \cline{2-5}
                                                                                          & Time/epoch (sec)    & Speedup   & Time/epoch (sec) & Speedup         \\ \hline
          no slowdown                                                                        & 1.22              & 14.78     & 1.47/1.45           & 12.27/12.44           \\ \hline
          2X                                                                                 & 1.28               & 14.09     & 2.6/2.36             & 6.93/7.64            \\ \hline
          10X                                                                                & 1.33               & 13.56     & 11.51/11.24             & 1.56/1.60            \\ \hline
          100X                                                                               & 1.33               & 13.56     & 100.4/100.4             & 0.18/0.18            \\ \hline
        \end{tabular}
        }
      \end{table}
}{\begin{table}[t]
\centering
\caption{Runtime efficiency comparison for ResNet-20 model on CIFAR-10 dataset when a computation device slows down by 2X-100X. AD-PSGD converges faster than AllReduce-SGD and D-PSGD, by orders of magnitude. 16 workers in total.}
\label{tab:speedup_slow_node}
\begin{tabular}{|l|l|l|l|l|}
\hline
\multirow{2}{*}{\begin{tabular}[c]{@{}l@{}}Slowdown of\\ one node\end{tabular}} & \multicolumn{2}{l|}{AD-PSGD} & \multicolumn{2}{l|}{AllReduce/D-PSGD}         \\ \cline{2-5}
                                                                                   & Time/epoch(sec)    & Speedup   & Time/epoch (sec) & Speedup         \\ \hline
no slowdown                                                                        & 1.22              & 14.78     & 1.47/1.45           & 12.27/12.44           \\ \hline
2X                                                                                 & 1.28               & 14.09     & 2.6/2.36             & 6.93/7.64            \\ \hline
10X                                                                                & 1.33               & 13.56     & 11.51/11.24             & 1.56/1.60            \\ \hline
100X                                                                               & 1.33               & 13.56     & 100.4/100.4             & 0.18/0.18            \\ \hline
\end{tabular}
\end{table}
}

\emph{\Cref{fig:loss-slower-worker} and \Cref{tab:speedup_slow_node} shows that
AD-PSGD's convergence is robust against slower workers. AD-PSGD can converge
faster than AllReduce-SGD and D-PSGD by orders of magnitude when there is a very
slow worker. }

\paragraph{Robustness against slow communication}

\Cref{fig:loss-link} shows that AD-PSGD is robust when one worker is connected
to slower network links. In contrast, centralized asynchronous algorithm EAMSGD
uses a larger communication period to overcome slower links, which significantly
slows down the convergence.

\emph{These results show only AD-PSGD is robust against both heterogeneous
  computation and heterogeneous communication. }

\section{Conclusion}

This paper proposes an asynchronous decentralized stochastic gradient descent
algorithm (AD-PSGD). The algorithm is not only robust in heterogeneous
environments by combining both decentralization and asynchronization, but it is
also theoretically justified to have the same convergence rate as its
synchronous and/or centralized counterparts and can achieve linear speedup w.r.t. number of
workers. Extensive experiments validate the proposed algorithm.

{\small
  \paragraph{Acknowledgment}
  This project is supported in part by NSF CCF1718513, NEC fellowship, IBM
  faculty award, Swiss NSF NRP 75 407540\_167266, IBM Zurich, Mercedes-Benz
  Research \& Development North America, Oracle Labs, Swisscom, Zurich
  Insurance, and Chinese Scholarship Council. \iftoggle{icml}{}{We thank David
    Grove, Hillery Hunter, and Ravi Nair for providing valuable feedback. We
    thank Anthony Giordano and Paul Crumley for well-maintaining the computing
    infrastructure that enables the experiments conducted in this paper.} }

\newpage
\bibliographystyle{abbrvnat}

\newpage
\appendix
\clearpage

\section{Wait-free (continuous) training and communication}
\label{sec:add:impl}

The theoretical guarantee of AD-PSGD relies on the the doubly stochastic
property of matrix $W$. The implication is the averaging of the weights between
two workers should be atomic. This brings a special challenge for current
distributed deep learning frameworks where the computation (gradients
calculation and weights update) runs on GPU devices and the communication runs
on CPU (or its peripherals such as infiniband or RDMA), because when there is
averaging happening on a worker, the GPU is not allowed to update gradients into
the weights. This can be solve by using CPU to update weights while GPUs only
calculate gradients. Every worker (including active and passive workers) runs
two threads in parallel with a shared buffer $g$, one thread for computation and
the other for communication. \Cref{alg:Async-D-PSGD-1},
\Cref{alg:Async-D-PSGD-2}, and \Cref{alg:Async-D-PSGD-3} illustrate the task on
each thread. The communication thread is run by CPUs, while the computation
thread is run by GPUs. In this way GPUs can continuously calculate new gradients
by putting the results in CPUs' buffer regardless of whether there is averaging
happening. Recall in D-PSGD, communication only occurs once in each iteration.
In contrast, AD-PSGD can exchange weights at any time by using this
implementation.

\begin{algorithm}
  \caption{Computation thread on active or passive worker (worker index is $i$)\label{alg:Async-D-PSGD-1}}
  \begin{minipage}{\iftoggle{icml}{0.48\textwidth}{\textwidth}}
  \begin{algorithmic}[1]
\iftoggle{icml}{\small }{ }
    \Require Batch size $M$
    \While {not terminated}
      \State Pull model $x^{i}$ from the communication thread.
      \State Update locally in the thread $x^{i} \gets x^{i}-\gamma g$.\footnote{At this time
        the communication thread may have not update $g$ into $x^{i}$ so the
        computation thread pulls an old model. We compensate this by doing local
        update in computation thread. We observe this helps the scaling.}
      \State Randomly sample a batch $\xi^i:=(\xi_{1}^{i}, \xi_{2}^{i},\ldots, \xi_{M}^{i})$ from local data of the $i$-th worker and compute
      the stochastic gradient $g^i(x^i;\xi^i):=\sum_{m=1}^{M}\nabla F(x^{i};\xi^{i}_{m})$ locally.
      \BlockUntil[$g=0$]
      \State Local buffer $g\gets g^{i}(x^{i};\xi^{i})$.\footnote{We can also
        make a queue of gradients here to avoid the waiting. Note that doing
        this will make the effective batch-size different from $M$.}
      \EndBlockUntil
    \EndWhile
  \end{algorithmic}
  \end{minipage}
\end{algorithm}

\begin{algorithm}
  \caption{Communication thread on active worker (worker index is $i$) \label{alg:Async-D-PSGD-2}}
  \begin{minipage}{\iftoggle{icml}{0.48\textwidth}{\textwidth}}
  \begin{algorithmic}[1]
\iftoggle{icml}{\small }{ }
  \Require Initialize local model $x^i$, learning rate $\gamma$.
  \While {not terminated}
    \If {$g\neq 0$}
        \State $x^{i} \gets x^{i} - \gamma g, \quad g\gets 0$.    \EndIf
    \State Randomly select a neighbor (namely worker $j$). Send $x^{i}$ to worker $j$ and fetch $x^{j}$ from it.
    \State $x^{i}\gets \frac{1}{2}(x^{i} + x^{j})$.
  \EndWhile
  \end{algorithmic}
  \end{minipage}
\end{algorithm}

\begin{algorithm}
  \caption{Communication thread on passive worker (worker index is $j$)  \label{alg:Async-D-PSGD-3}}
  \begin{minipage}{\iftoggle{icml}{0.48\textwidth}{\textwidth}}
  \begin{algorithmic}[1]
\iftoggle{icml}{\small }{ }
  \Require Initialize local model $x^j$, learning rate $\gamma$.
  \While {not terminated}
    \If {$g\neq 0$ }
        \State $x^{j} \gets x^{j} - \gamma g, \quad g\gets 0$.     \EndIf
    \If {receive the request of reading local model (say from worker $i$)}
      \State Send $x^{j}$ to worker $i$.
      \State $x^{j} \gets \frac{1}{2}(x^{i}+ x^{j})$.
    \EndIf
  \EndWhile
  \end{algorithmic}
  \end{minipage}
\end{algorithm}

\section{NLC experiments}
\label{sec:nlc-experiments}

In this section, we use IBM proprietary natural language processing dataset and
model to evaluate AD-PSGD against other algorithms.

\iftoggle{icml}{\begin{figure}[H]
\small
    \centering
    \subfloat[{\small NLC-Joule}]{{ \includegraphics[width=0.5\columnwidth]{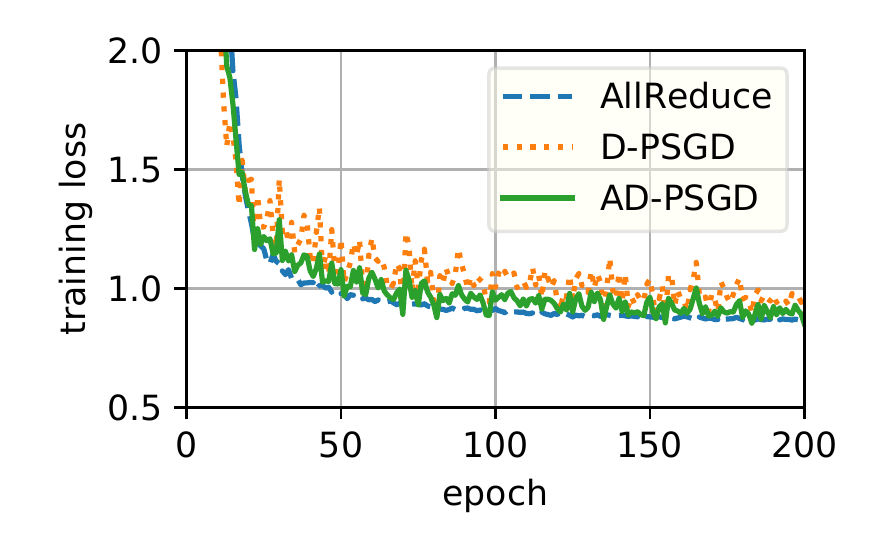} }\label{fig:nlc-jewel}}
    \subfloat[{\small NLC-Yelp}]{{ \includegraphics[width=0.5\columnwidth]{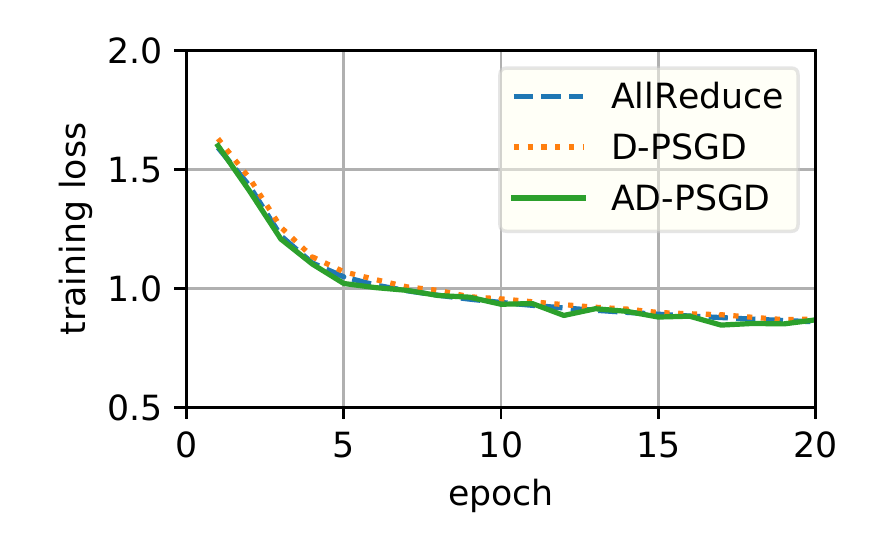}}\label{fig:yelp-jewel}}

    \caption{\small Training loss comparison for IBM NLC model on Joule and Yelp
      datasets. AllReduce-SGD, D-PSGD and AD-PSGD converge alike w.r.t. epochs.}
    \label{fig:NLC}
\end{figure}
}{\begin{figure*}[t]\small
    \centering
    \subfloat[{\small NLC-Joule}]{{ \includegraphics[scale=0.5]{./figures/plot/nlc-jewel} }\label{fig:nlc-jewel}}
    \subfloat[{\small NLC-Yelp}]{{ \includegraphics[scale=0.5]{./figures/plot/nlc-yelp}}\label{fig:yelp-jewel}}

    \caption{\small Training loss comparison for IBM NLC model on Joule and Yelp
      datasets. AllReduce-SGD, D-PSGD and AD-PSGD converge alike w.r.t. epochs.}
    \label{fig:NLC}
\end{figure*}
}

The IBM NLC task is to classify input sentences into a target category in a
predefined label set. The NLC model is a CNN model that has a word-embedding lookup table layer, a
convolutional layer and a fully connected layer with a softmax output layer. We
use two datasets in our evaluation. The first dataset Joule is an in-house
customer dataset that has 2.5K training samples, 1K test samples, and 311
different classes. The second dataset Yelp, which is a public dataset, has 500K
training samples, 2K test samples and 5 different classes. \Cref{fig:NLC} shows
that AD-PSGD converges (w.r.t epochs) similarly to AllReduce-SGD and D-PSGD on
NLC tasks.

{\em
  Above results show AD-PSGD converges similarly (w.r.t) to AllReduce-SGD and
  D-PSGD for IBM NLC workload, which is an example of proprietary workloads.
}

\onecolumn
\section{Appendix: proofs}

In the following analysis we define
\begin{equation}
  M_k := \sum_{i=1}^n p_{i} \left\| \frac{X_{k} \mathbf{1}_n}{n} - X_{k} e_i \right\|^2,
\end{equation}
and
\begin{equation}
  \hat{M}_k := M_{k-\tau_{k}}.
\end{equation}

We also define
\begin{align*}
  \partial f (X_k) := & n \left[\begin{array}{cccc}
    p_{1} \nabla f_1 (x_{k}^{1}) & p_{2} \nabla f_2 (x_{k}^{2}) & \cdots & p_{n}\nabla f_n (x_{k}^{n})
  \end{array}\right] \in \mathbb{R}^{N \times n} ,\\
  \partial f (X_k, i) := & \left[\begin{array}{ccccc}
    0 & \cdots & \nabla f_i (x_{k}^{i}) & \cdots & 0
  \end{array}\right] \in \mathbb{R}^{N \times n} ,\\
 \partial g (\hat{X}_{k}, \xi_k) :=& n \left[\begin{array}{cccc}
     p_1  \sum_{j = 1}^M \nabla F (\hat{x}_{k}^{1}, \xi_{k,j}^{1}) & \cdots & p_n
     \sum_{j = 1}^M \nabla F (\hat{x}_{k}^{n}, \xi_{k, j}^n)
   \end{array}\right] \in \mathbb{R}^{N\times n} .
\end{align*}

\begin{align*}
  \bar{\rho} := & \frac{n-1}{n}\left(  \frac{1}{1 - \rho} + \frac{2 \sqrt{\rho}}{\left( 1 - \sqrt{\rho} \right)^2}  \right),\\
  C_1 := & 1 - 24 M^{2} L^2 \gamma^2 \left( T \frac{n - 1}{n} + \bar{\rho} \right),\\
  C_2 := & \frac{\gamma M}{2 n} - \frac{\gamma^2 LM^2}{n^2} - \frac{2 M^3 L^2
           T^2 \gamma^3}{n^3} - \left( \frac{6 \gamma^2 L^3 M^2}{n^2} + \frac{\gamma
           M}{n} L^2 + \frac{12 M^3 L^4 T^2 \gamma^3}{n^3} \right)  \frac{4 M^2
           \gamma^2 (T\frac{n-1}{n}+ \bar{\rho})}{C_1},\\
  C_3 := & \frac{1}{2} + \frac{2 \left( 6 \gamma^2 L^2 M^2 + \gamma nML +
           \frac{12 M^3 L^3 T^2 \gamma^3}{n} \right)  \bar{\rho}}{C_1} +
           \frac{LT^2 \gamma M}{n} .
\end{align*}

\begin{proof}[Proof to \Cref{thm:main-thm}]
  We start from
  \begin{align*}
  & \mathbb{E} f \left( \frac{X_{k + 1}  \mathbf{1}_n}{n} \right)\\
  = & \mathbb{E} f \left( \frac{X_k W_k  \mathbf{1}_n}{n} - \gamma
  \frac{\partial g (\hat{X}_{k} ; \xi_{k}^{i_{k}}, i_k) \mathbf{1}_n}{n}
  \right) = \mathbb{E} f \left( \frac{X_k  \mathbf{1}_n}{n} - \gamma
  \frac{\partial g (\hat{X}_{k} ; \xi_{k}^{i_{k}}, i_k) \mathbf{1}_n}{n}
  \right)\\
  \leqslant & \mathbb{E} f \left( \frac{X_k  \mathbf{1}_n}{n} \right) - \gamma
  \mathbb{E} \left\langle \nabla f \left( \frac{X_k  \mathbf{1}_n}{n} \right),
  \frac{\partial g (\hat{X}_{k} ; \xi_{k}^{i_{k}}, i_k) \mathbf{1}_n}{n}
  \right\rangle + \frac{\gamma^2 L}{2}  \mathbb{E} \left\| \frac{\partial g
  (\hat{X}_{k} ; \xi_{k}^{i_{k}}, i_k) \mathbf{1}_n}{n} \right\|^2\\
  \overset{\text{(\ref{eq:azjlzj}), {\Cref{lemma:zhlkdsamfls}}}}{\leqslant} &
  \mathbb{E} f \left( \frac{X_k  \mathbf{1}_n}{n} \right) - \frac{\gamma M}{n}
  \mathbb{E} \left\langle \nabla f \left( \frac{X_k  \mathbf{1}_n}{n}
  \right), \frac{\partial f (\hat{X}_{k}) \mathbf{1}_n}{n} \right\rangle +
  \frac{\gamma^2 L \sigma^2 M}{2 n^2} + \frac{\gamma^2 LM^2}{2 n^2}  \sum_{i =
  1}^n p_i  \mathbb{E} \| \nabla f_i (\hat{x}_{k}^{i})\|^2\\
  = & \mathbb{E} f \left( \frac{X_k  \mathbf{1}_n}{n} \right) + \frac{\gamma
  M}{2 n}  \left\| \nabla f \left( \frac{X_k  \mathbf{1}_n}{n} \right) -
  \frac{\partial f (\hat{X}_{k}) \mathbf{1}_n}{n} \right\|^2 - \frac{\gamma
  M}{2 n}  \mathbb{E} \left\| \nabla f \left( \frac{X_k  \mathbf{1}_n}{n}
  \right) \right\|^2 - \frac{\gamma M}{2 n}  \mathbb{E} \left\| \frac{\partial
  f (\hat{X}_{k}) \mathbf{1}_n}{n} \right\|^2\\
  & + \frac{\gamma^2 LM^2}{2 n^2}  \sum_{i = 1}^n p_i  \mathbb{E} \| \nabla
  f_i (\hat{x}_{k}^{i})\|^2 + \frac{\gamma^2 L \sigma^2 M}{2 n^2} .
\end{align*}

Using the upper bound of $\sum_{i = 1}^n p_i \mathbb{E} \| \nabla f_i (\hat{x}_{k}^{i})\|^2$ in \Cref{lemma:20170614-001358}:
\begin{align*}
  & \mathbb{E} f \left( \frac{X_{k + 1}  \mathbf{1}_n}{n} \right)\\
  \leqslant & \mathbb{E} f \left( \frac{X_k  \mathbf{1}_n}{n} \right) +
  \frac{\gamma M}{2 n}  \mathbb{E}  \left\| \nabla f \left( \frac{X_k
  \mathbf{1}_n}{n} \right) - \frac{\partial f (\hat{X}_{k})
  \mathbf{1}_n}{n} \right\|^2 - \frac{\gamma M}{2 n}  \mathbb{E} \left\|
  \nabla f \left( \frac{X_k  \mathbf{1}_n}{n} \right) \right\|^2 -
  \frac{\gamma M}{2 n}  \mathbb{E}\left \| \frac{\partial f (\hat{X}_{k}) \mathbf{1}_n}{n} \right\|^2\\
  & + \frac{\gamma^2 LM^2}{2 n^2}  \left( 12 L^2 \hat{M}_{k} + 6
  \varsigma^2 + 2 \sum_{i = 1}^n p_i  \mathbb{E} \left\| \sum_{j = 1}^n p_j
  \nabla f_j (\hat{x}_{k}^{j}) \right\|^2 \right) + \frac{\gamma^2 L
  \sigma^2 M}{2 n^2}\\
  = & \mathbb{E} f \left( \frac{X_k  \mathbf{1}_n}{n} \right) + \frac{\gamma
  M}{2 n} \underbrace{\mathbb{E}  \left\| \nabla f \left( \frac{X_k
  \mathbf{1}_n}{n} \right) - \frac{\partial f (\hat{X}_{k})
  \mathbf{1}_n}{n} \right\|^2}_{T_1} - \frac{\gamma M}{2 n}  \mathbb{E}
  \left\| \nabla f \left( \frac{X_k  \mathbf{1}_n}{n} \right) \right\|^2
  \\
  & - \left( \frac{\gamma M}{2 n} - \frac{\gamma^2 LM^2}{n^2} \right)  \mathbb{E}
  \left\| \frac{\partial f (\hat{X}_{k}) \mathbf{1}_n}{n} \right\|^2
   + \frac{\gamma^2 L (\sigma^2 M + 6 \varsigma^2 M^2)}{2 n^2} + \frac{6
  \gamma^2 L^3 M^2}{n^2} \hat{M}_{k} .\numberthis \label{eq:Wed Jul 19 16:40:39 EDT 2017}
\end{align*}

For $T_1$ we have
\begin{align*}
  T_1= & \mathbb{E} \left\| \nabla f \left( \frac{X_k
  \mathbf{1}_n}{n} \right) - \frac{\partial f (\hat{X}_{k})
  \mathbf{1}_n}{n} \right\|^2\\
  \leqslant & 2\mathbb{E} \left\| \nabla f \left( \frac{X_k
  \mathbf{1}_n}{n} \right) - \nabla f \left( \frac{\hat{X}_{k}
  \mathbf{1}_n}{n} \right) \right\|^2
   + 2\mathbb{E} \left\| \nabla f \left( \frac{\hat{X}_{k}
  \mathbf{1}_n}{n} \right) - \frac{\partial f (\hat{X}_{k})
  \mathbf{1}_n}{n} \right\|^2\\
  = & 2\mathbb{E} \left\| \nabla f \left( \frac{X_k \mathbf{1}_n}{n}
  \right) - \nabla f \left( \frac{\hat{X}_{k} \mathbf{1}_n}{n} \right)
  \right\|^2 + 2 \mathbb{E} \left\| \sum_ip_{i} \left( \nabla f_i
  \left( \frac{\hat{X}_{k} \mathbf{1}_n}{n} \right) - \nabla f_i (\hat{x}_{k}^{i}) \right) \right\|^2\\
  \leqslant & 2\mathbb{E} \left\| \nabla f \left( \frac{X_k \mathbf{1}_n}{n}
  \right) - \nabla f \left( \frac{\hat{X}_{k} \mathbf{1}_n}{n} \right)
              \right\|^2 + 2 \mathbb{E}\sum_ip_{i} \left\|   \nabla f_i
              \left( \frac{\hat{X}_{k} \mathbf{1}_n}{n} \right) - \nabla f_i (\hat{x}_{k}^{i})  \right\|^2\\
  \overset{
\Cref{ass:20170614-002451}:1 }{\leqslant} & 2 L^2
  \mathbb{E}\left\| \frac{(X_k - \hat{X}_{k}) \mathbf{1}_n}{n} \right\|^2 + 2
  L^2\mathbb{E} \hat{M}_{k} .\numberthis\label{eq:Wed Jul 19 16:41:07 EDT 2017}
\end{align*}

From \eqref{eq:Wed Jul 19 16:40:39 EDT 2017} and \eqref{eq:Wed Jul 19 16:41:07
  EDT 2017} we obtain
\begin{align*}
  \mathbb{E} f \left( \frac{X_{k + 1}  \mathbf{1}_n}{n} \right) \leqslant &
  \mathbb{E} f \left( \frac{X_k  \mathbf{1}_n}{n} \right) + \frac{\gamma M}{2
  n}  \mathbb{E}  \left( 2 L^2 \left\| \frac{(X_k - \hat{X}_{k})
  \mathbf{1}_n}{n} \right\|^2 + 2 L^2 \hat{M}_{k} \right)\\
  & - \frac{\gamma M}{2 n}  \mathbb{E} \left\| \nabla f \left( \frac{X_k
  \mathbf{1}_n}{n} \right) \right\|^2 - \left( \frac{\gamma M}{2 n} -
  \frac{\gamma^2 LM^2}{n^2} \right)  \mathbb{E}\left \| \frac{\partial f (\hat{X}_{k}) \mathbf{1}_n}{n} \right\|^2\\
  & + \frac{6 \gamma^2 L^3 M^2}{n^2}  \mathbb{E} \hat{M}_{k} +
  \frac{\gamma^2 L (\sigma^2 M + 6 \varsigma^2 M^2)}{2 n^2}\\
  = & \mathbb{E} f \left( \frac{X_k  \mathbf{1}_n}{n} \right) - \frac{\gamma
  M}{2 n}  \mathbb{E} \left\| \nabla f \left( \frac{X_k  \mathbf{1}_n}{n}
  \right) \right\|^2 - \left( \frac{\gamma M}{2 n} - \frac{\gamma^2 LM^2}{n^2}
  \right)  \mathbb{E}\left \| \frac{\partial f (\hat{X}_{k}) \mathbf{1}_n}{n} \right\|^2\\
  & + \left( \frac{6 \gamma^2 L^3 M^2}{n^2} + \frac{\gamma M}{n} L^2 \right)
  \mathbb{E} \hat{M}_{k} + \frac{\gamma M}{n} L^2  \mathbb{E} \left\|
  \frac{(X_k - \hat{X}_{k}) \mathbf{1}_n}{n} \right\|^2 + \frac{\gamma^2 L
  (\sigma^2 M + 6 \varsigma^2 M^2)}{2 n^2}\\
  \overset{\text{{\Cref{lemma:20170614-115656}}}}{\leqslant} & \mathbb{E} f
  \left( \frac{X_k  \mathbf{1}_n}{n} \right) - \frac{\gamma M}{2 n}
  \mathbb{E} \left\| \nabla f \left( \frac{X_k  \mathbf{1}_n}{n} \right)
  \right\|^2 - \left( \frac{\gamma M}{2 n} - \frac{\gamma^2 LM^2}{n^2} \right)
  \mathbb{E} \left\| \frac{\partial f (\hat{X}_{k}) \mathbf{1}_n}{n} \right\|^2\\
  & + \left( \frac{6 \gamma^2 L^3 M^2}{n^2} + \frac{\gamma M}{n} L^2 \right)
  \mathbb{E} \hat{M}_{k} + \frac{\gamma M}{n} L^2  \left( \frac{\tau_k^2
  \gamma^2 \sigma^2 M}{n^2} + \tau_k \gamma^2  \sum_{t = 1}^{\tau_k} \left(
  \frac{M^2}{n^2}  \sum_{i = 1}^n p_i  \mathbb{E} \| \nabla f_i (\hat{x}_{k - t}^{i})\|^2 \right) \right)\\
  & + \frac{\gamma^2 L (\sigma^2 M + 6 \varsigma^2 M^2)}{2 n^2}\\
  \leqslant & \mathbb{E} f \left( \frac{X_k  \mathbf{1}_n}{n} \right) -
  \frac{\gamma M}{2 n}  \mathbb{E} \left\| \nabla f \left( \frac{X_k
  \mathbf{1}_n}{n} \right) \right\|^2 - \left( \frac{\gamma M}{2 n} -
  \frac{\gamma^2 LM^2}{n^2} \right)  \mathbb{E}\left \| \frac{\partial f (\hat{X}_{k}) \mathbf{1}_n}{n} \right\|^2\\
  & + \left( \frac{6 \gamma^2 L^3 M^2}{n^2} + \frac{\gamma M}{n} L^2 \right)
  \mathbb{E} \hat{M}_{k} + \frac{\gamma^2 L (\sigma^2 M + 6 \varsigma^2
  M^2)}{2 n^2} + \frac{L^2 T^2 \gamma^3 \sigma^2 M^2}{n^3}\\
  & + \frac{M^3 L^2 \tau_k \gamma^3}{n^3}  \sum_{t = 1}^{\tau_k} \left(
  \sum_{i = 1}^n p_i  \mathbb{E} \| \nabla f_i (\hat{x}_{k - t}^{i})\|^2 \right)\\
  \overset{\text{{\Cref{lemma:20170614-001358}}}}{\leqslant} & \mathbb{E} f
  \left( \frac{X_k  \mathbf{1}_n}{n} \right) - \frac{\gamma M}{2 n}
  \mathbb{E} \left\| \nabla f \left( \frac{X_k  \mathbf{1}_n}{n} \right)
  \right\|^2 - \left( \frac{\gamma M}{2 n} - \frac{\gamma^2 LM^2}{n^2} \right)
  \mathbb{E} \left\| \frac{\partial f (\hat{X}_{k}) \mathbf{1}_n}{n} \right\|^2\\
  & + \left( \frac{6 \gamma^2 L^3 M^2}{n^2} + \frac{\gamma M}{n} L^2 \right)
  \mathbb{E} \hat{M}_{k} + \frac{\gamma^2 L (\sigma^2 M + 6 \varsigma^2
  M^2)}{2 n^2} + \frac{L^2 T^2 \gamma^3 \sigma^2 M^2}{n^3}\\
  & + \frac{M^3 L^2 \tau_k \gamma^3}{n^3}  \sum_{t = 1}^{\tau_k} \left( 12
  L^2 \hat{M}_{k - t } + 6 \varsigma^2 + 2 \mathbb{E} \left\| \sum_{j
  = 1}^n p_j \nabla f_j (\hat{x}_{k - t}^{j}) \right\|^2 \right)\\
  =& \mathbb{E} f \left( \frac{X_k  \mathbf{1}_n}{n} \right) - \frac{\gamma
  M}{2 n}  \mathbb{E} \left\| \nabla f \left( \frac{X_k  \mathbf{1}_n}{n}
  \right) \right\|^2 - \left( \frac{\gamma M}{2 n} - \frac{\gamma^2 LM^2}{n^2}
  \right)  \mathbb{E} \left\| \frac{\partial f (\hat{X}_{k}) \mathbf{1}_n}{n} \right\|^2\\
  & + \left( \frac{6 \gamma^2 L^3 M^2}{n^2} + \frac{\gamma M}{n} L^2 \right)
  \mathbb{E} \hat{M}_{k} + \frac{\gamma^2 L (\sigma^2 M + 6 \varsigma^2
  M^2)}{2 n^2} + \frac{L^2 T^2 \gamma^3 M (\sigma^2 M + 6 \varsigma^2
  M^2)}{n^3}\\
  & + \frac{2 M^3 L^2 T \gamma^3}{n^3}  \sum_{t = 1}^{\tau_k} \left( 6
  L^2  \mathbb{E} \hat{M}_{k - t} + \mathbb{E} \left\| \frac{\partial
  f (\hat{X}_{k - t}) \mathbf{1}_n}{n} \right\|^2 \right) .
\end{align*}

Summing from $k = 0$ to $k = K - 1$ we obtain
\begin{align*}
  \mathbb{E} f \left( \frac{X_K  \mathbf{1}_n}{n} \right) \leqslant &
  \mathbb{E} f \left( \frac{X_0  \mathbf{1}_n}{n} \right) - \frac{\gamma M}{2
  n}  \sum_{k = 0}^{K - 1} \mathbb{E} \left\| \nabla f \left( \frac{X_k
  \mathbf{1}_n}{n} \right) \right\|^2 - \left( \frac{\gamma M}{2 n} -
  \frac{\gamma^2 LM^2}{n^2} \right)  \sum_{k = 0}^{K - 1} \mathbb{E} \left\|
  \frac{\partial f (\hat{X}_{k}) \mathbf{1}_n}{n} \right\|^2\\
  & + \left( \frac{6 \gamma^2 L^3 M^2}{n^2} + \frac{\gamma M}{n} L^2 \right)
  \sum_{k = 0}^{K - 1} \mathbb{E} \hat{M}_{k} + \frac{\gamma^2 L (\sigma^2
  M + 6 \varsigma^2 M^2) K}{2 n^2} + \frac{L^2 T^2 \gamma^3 M (\sigma^2 M + 6
  \varsigma^2 M^2) K}{n^3}\\
  & + \frac{2 M^3 L^2 T \gamma^3}{n^3}  \sum_{k = 0}^{K - 1} \sum_{t =
  1}^{\tau_k} \left( 6 L^2  \mathbb{E} \hat{M}_{k - t} + \mathbb{E}
  \left\| \frac{\partial f (\hat{X}_{k - t}) \mathbf{1}_n}{n}
  \right\|^2 \right)\\
  \leqslant & \mathbb{E} f \left( \frac{X_0  \mathbf{1}_n}{n} \right) -
  \frac{\gamma M}{2 n}  \sum_{k = 0}^{K - 1} \mathbb{E} \left\| \nabla f
  \left( \frac{X_k  \mathbf{1}_n}{n} \right) \right\|^2 - \left( \frac{\gamma
  M}{2 n} - \frac{\gamma^2 LM^2}{n^2} \right)  \sum_{k = 0}^{K - 1} \mathbb{E}
  \| \frac{\partial f (\hat{X}_{k}) \mathbf{1}_n}{n} \|^2\\
  & + \left( \frac{6 \gamma^2 L^3 M^2}{n^2} + \frac{\gamma M}{n} L^2 \right)
  \sum_{k = 0}^{K - 1} \mathbb{E} \hat{M}_{k} + \frac{\gamma^2 L (\sigma^2
  M + 6 \varsigma^2 M^2) K}{2 n^2} + \frac{L^2 T^2 \gamma^3 M (\sigma^2 M + 6
  \varsigma^2 M^2) K}{n^3}\\
  & + \frac{2 M^3 L^2 T^2 \gamma^3}{n^3}  \sum_{k = 0}^{K - 1} \left( 6 L^2
  \mathbb{E} \hat{M}_{k} + \mathbb{E} \left\| \frac{\partial f (\hat{X}_{k}) \mathbf{1}_n}{n} \right\|^2 \right)\\
  = & \mathbb{E} f \left( \frac{X_0  \mathbf{1}_n}{n} \right) - \frac{\gamma
  M}{2 n}  \sum_{k = 0}^{K - 1} \mathbb{E} \left\| \nabla f \left( \frac{X_k
  \mathbf{1}_n}{n} \right) \right\|^2\\
  & - \left( \frac{\gamma M}{2 n} - \frac{\gamma^2 LM^2}{n^2} - \frac{2 M^3
  L^2 T^2 \gamma^3}{n^3} \right)  \sum_{k = 0}^{K - 1} \mathbb{E} \left\|
  \frac{\partial f (\hat{X}_{k}) \mathbf{1}_n}{n} \right\|^2\\
  & + \left( \frac{6 \gamma^2 L^3 M^2}{n^2} + \frac{\gamma M}{n} L^2 +
  \frac{12 M^3 L^4 T^2 \gamma^3}{n^3} \right)  \sum_{k = 0}^{K - 1} \mathbb{E}
  \hat{M}_{k}\\
  & + \frac{\gamma^2 L (\sigma^2 M + 6 \varsigma^2 M^2) K}{2 n^2} + \frac{L^2
  T^2 \gamma^3 M (\sigma^2 M + 6 \varsigma^2 M^2) K}{n^3} .\\
  \leqslant & \mathbb{E} f \left( \frac{X_0  \mathbf{1}_n}{n} \right) -
  \frac{\gamma M}{2 n}  \sum_{k = 0}^{K - 1} \mathbb{E} \left\| \nabla f
  \left( \frac{X_k  \mathbf{1}_n}{n} \right) \right\|^2\\
  & - \left( \frac{\gamma M}{2 n} - \frac{\gamma^2 LM^2}{n^2} - \frac{2 M^3
  L^2 T^2 \gamma^3}{n^3} \right)  \sum_{k = 0}^{K - 1} \mathbb{E} \left\|
  \frac{\partial f (\hat{X}_{k}) \mathbf{1}_n}{n} \right\|^2\\
  & + \left( \frac{6 \gamma^2 L^3 M^2}{n^2} + \frac{\gamma M}{n} L^2 +
  \frac{12 M^3 L^4 T^2 \gamma^3}{n^3} \right)  \sum_{k = 0}^{K - 1} \mathbb{E}
  \hat{M}_{k}\\
  & + \frac{\gamma^2 L (\sigma^2 M + 6 \varsigma^2 M^2) K}{2 n^2} + \frac{L^2
  T^2 \gamma^3 M (\sigma^2 M + 6 \varsigma^2 M^2) K}{n^3}\\
  \overset{C_1 > 0, \text{{\Cref{lemma:20170614-114822}}}}{\leqslant} &
  \mathbb{E} f \left( \frac{X_0  \mathbf{1}_n}{n} \right) - \frac{\gamma M}{2
  n}  \sum_{k = 0}^{K - 1} \mathbb{E} \left\| \nabla f \left( \frac{X_k
  \mathbf{1}_n}{n} \right) \right\|^2\\
  & - \left( \frac{\gamma M}{2 n} - \frac{\gamma^2 LM^2}{n^2} - \frac{2 M^3
  L^2 T^2 \gamma^3}{n^3} \right)  \sum_{k = 0}^{K - 1} \mathbb{E} \left\|
  \frac{\partial f (\hat{X}_{k}) \mathbf{1}_n}{n} \right\|^2\\
  & + \left( \frac{6 \gamma^2 L^3 M^2}{n^2} + \frac{\gamma M}{n} L^2 +
  \frac{12 M^3 L^4 T^2 \gamma^3}{n^3} \right) K \frac{2 \gamma^2  (M \sigma^2
  + 6 M^2 \varsigma^2)  \bar{\rho}}{C_1}\\
  & + \left( \frac{6 \gamma^2 L^3 M^2}{n^2} + \frac{\gamma M}{n} L^2 +
  \frac{12 M^3 L^4 T^2 \gamma^3}{n^3} \right)  \frac{4 M^2 \gamma^2  \left( T
  \frac{n - 1}{n}  + \bar{\rho} \right) \sum_{k = 0}^{K - 1} \mathbb{E} \left\|
  \sum_{i = 1}^n p_i \nabla f_i (\hat{x}_{k}^{i}) \right\|^2}{C_1}\\
  & + \frac{\gamma^2 L (\sigma^2 M + 6 \varsigma^2 M^2) K}{2 n^2} + \frac{L^2
  T^2 \gamma^3 M (\sigma^2 M + 6 \varsigma^2 M^2) K}{n^3}\\
  = & \mathbb{E} f \left( \frac{X_0  \mathbf{1}_n}{n} \right) - \frac{\gamma
  M}{2 n}  \sum_{k = 0}^{K - 1} \mathbb{E} \left\| \nabla f \left( \frac{X_k
  \mathbf{1}_n}{n} \right) \right\|^2\\
  & - C_2  \sum_{k = 0}^{K - 1} \mathbb{E} \left\| \frac{\partial f (\hat{X}_{k}) \mathbf{1}_n}{n} \right\|^2 + C_3  \frac{\gamma^2 LK}{n^2}  (M
  \sigma^2 + 6 M^2 \varsigma^2) .
\end{align*}
Thus while $C_3 \leqslant 1$ and $C_2 \geqslant 0$ we have
\begin{align*}
  \frac{\sum_{k = 0}^{K - 1} \mathbb{E} \left\| \nabla f \left( \frac{X_k
  \mathbf{1}_n}{n} \right) \right\|^2}{K} \leqslant & \frac{2 \left(
  \mathbb{E} f \left( \frac{X_0  \mathbf{1}_n}{n} \right) - \mathbb{E} f
  \left( \frac{X_K  \mathbf{1}_n}{n} \right) \right)}{\gamma KM / n} + \frac{2
  \gamma L}{Mn}  (M \sigma^2 + 6 M^2 \varsigma^2)\\
  \leqslant & \frac{2 (\mathbb{E} f (x_0) - \mathbb{E} f^{\ast})}{\gamma KM /
  n} + \frac{2 \gamma L}{n}  ( \sigma^2 + 6 M \varsigma^2) .
\end{align*}
It completes the proof.
\end{proof}

\begin{lemma}\label{lemma:zhzlxdfmlqwr}
Define $\prod_{k = 1}^0 W_k = I$, where $I$ is the identity matrix. Then
\begin{align*}
  \mathbb{E}\left\|\frac{\mathbf{1}_n}{n} - \prod_{k = 1}^K W_k e_i\right \|^2 \le \frac{n-1}{n}\rho^K, \quad \forall K \ge 0.
\end{align*}
\end{lemma}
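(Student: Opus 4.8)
The plan is to exploit the doubly stochastic structure of each $W_k$ to reduce the product to a product of ``centered'' matrices $W_k - J$, where $J := \frac{\mathbf{1}_n\mathbf{1}_n^\top}{n}$, and then to peel the factors one at a time in expectation using the spectral-gap bound. First I would record the algebraic facts following from $W_k \mathbf{1}_n = \mathbf{1}_n$ and $\mathbf{1}_n^\top W_k = \mathbf{1}_n^\top$ (double stochasticity): namely $W_k J = J W_k = J$ and $J^2 = J = J^\top$. A short induction on $K$ then gives the key factorization
\[
\prod_{k=1}^K W_k - J = \prod_{k=1}^K (W_k - J),
\]
since in expanding the right-hand side every term containing a factor $J$ collapses to $J$, and these collapse to cancel all but the full product. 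Writing $B_k := W_k - J$ and noting $B_k \mathbf{1}_n = \mathbf{1}_n - \mathbf{1}_n = 0$, I can replace $e_i$ by $e_i - \tfrac{\mathbf{1}_n}{n}$ at no cost, so that
\[
\frac{\mathbf{1}_n}{n} - \prod_{k=1}^K W_k\, e_i = \Big(J - \prod_{k=1}^K W_k\Big) e_i = -\Big(\prod_{k=1}^K B_k\Big)\Big(e_i - \frac{\mathbf{1}_n}{n}\Big),
\]
and the deterministic initial vector $w := e_i - \tfrac{\mathbf{1}_n}{n}$ satisfies $\|w\|^2 = \tfrac{n-1}{n}$ by direct computation (this also settles the base case $K=0$).

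Next I would control a single centered factor in expectation. A one-line computation using the facts above gives $\mathbb{E}[B_k^\top B_k] = \mathbb{E}[W_k^\top W_k] - J$. Since $\mathbb{E}[W_k^\top W_k]$ is symmetric positive semidefinite with $\mathbf{1}_n$ as an eigenvector of eigenvalue $1$ (this is its top eigenvalue because each $W_k$ is doubly stochastic, hence $W_k^\top W_k \preceq I$), and since subtracting $J$ removes exactly this top eigenpair while leaving the action on $\mathbf{1}_n^\perp$ unchanged, the spectral-gap assumption (item 3 of \Cref{ass:20170614-002451}) yields $0 \preceq \mathbb{E}[B_k^\top B_k] \preceq \rho I$. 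Consequently, for any deterministic $u$,
\[
\mathbb{E}\|B_k u\|^2 = u^\top \mathbb{E}[B_k^\top B_k]\, u \le \rho \|u\|^2.
\]

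Finally I would iterate. Because $W_k$ depends only on $i_k$ and the $i_k$ are independent (item 7 of \Cref{ass:20170614-002451}), the matrices $B_1,\dots,B_K$ are independent. Conditioning on $B_2,\dots,B_K$ and applying the single-step bound to the outermost factor $B_1$ (independent of $u := B_2\cdots B_K w$) gives $\mathbb{E}\big\|\prod_{k=1}^K B_k\, w\big\|^2 \le \rho\, \mathbb{E}\big\|\prod_{k=2}^K B_k\, w\big\|^2$; repeating the tower-property step $K$ times peels off one factor of $\rho$ each time and terminates at $\|w\|^2$, so that $\mathbb{E}\big\|\prod_{k=1}^K B_k\, w\big\|^2 \le \rho^K \|w\|^2 = \frac{n-1}{n}\rho^K$, which is the claim. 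I expect the main obstacle to be the spectral step: correctly verifying that subtracting $J$ annihilates precisely the eigenvalue-$1$ component so the remaining eigenvalues are bounded by $\rho$, together with the bookkeeping that justifies peeling through a product of \emph{random} matrices via their independence rather than a crude sub-multiplicative operator-norm bound.
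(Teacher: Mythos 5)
Your proof is correct and follows essentially the same route as the paper's: both reduce to the one-step contraction $\mathbb{E}\|W_{k}u\|^2 \le \rho\|u\|^2$ for deterministic $u\perp \mathbf{1}_n$ (via the spectral gap of $\mathbb{E}[W_k^\top W_k]$ restricted to $\mathbf{1}_n^\perp$), iterated by peeling one independent factor at a time with the tower property, starting from $\|e_i-\mathbf{1}_n/n\|^2=\tfrac{n-1}{n}$. Your centering with $J=\mathbf{1}_n\mathbf{1}_n^\top/n$ and the factorization $\prod_k W_k - J=\prod_k(W_k-J)$ is only a cosmetic repackaging of the paper's observation that $y_K=\mathbf{1}_n/n-\prod_k W_k e_i$ stays orthogonal to $\mathbf{1}_n$.
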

\begin{proof}
  Let $y_K = \frac{\mathbf{1}_n}{n} - \prod_{k = 1}^K W_k e_i$. Then noting that
  $y_{K+1} = W_{K+1}y_{K}$ we have
\begin{align*}
  & \mathbb{E} \| y_{K + 1} \|^2\\
  = & \mathbb{E} \| W_{k+1} y_{K}\|^{2}\\
  = & \mathbb{E} \langle W_{K + 1} y_K, W_{K + 1} y_K \rangle\\
  = & \mathbb{E} \langle y_K, W^{\top}_{K + 1} W_{K + 1} y_K \rangle \\
  = & \mathbb{E} \langle y_K, \mathbb{E}\mathbb{E}_{i_{K+1}}(W^{\top}_{K + 1} W_{K + 1}) y_K \rangle\\
  = & \mathbb{E} \langle y_K, \mathbb{E}(W^{\top}_{K + 1} W_{K + 1}) y_K \rangle.
\end{align*}

Note that $\mathbb{E}(W^{\top}_{K + 1} W_{K + 1})$ is symmetric and doubly
stochastic and $\mathbf{1}_{n}$ is an eigenvector of $\mathbb{E}(W^{\top}_{K +
  1} W_{K + 1})$ with eigenvalue 1. Starting from $\mathbf{1}_{n}$ we construct
a basis of $\mathbb{R}^{n}$ composed by the eigenvectors of
$\mathbb{E}(W^{\top}_{K + 1} W_{K + 1})$, which is guaranteed to exist by the
spectral theorem of Hermitian matrices. From \eqref{eq:2lkzjsdf} the magnitude
of all other eigenvectors' associated eigenvalues should be smaller or equal to
$\rho$. Noting $y_{K}$ is orthogonal to $\mathbf{1}_{n}$, we decompose $y_{K}$
using this constructed basis and it follows that
\begin{align*}
 \mathbb{E} \| y_{K + 1} \|^2 \leqslant & \rho \mathbb{E} \| y_K \|^2 .
\end{align*}
Noting that $\|y_0 \|^2 = \| \mathbf{1}_n / n - e_i \|^2 = \frac{(n - 1)^2}{n^2} + \sum_{i
= 1}^{n - 1} \frac{1}{n^2} = \frac{n^2 - 2 n + 1 + n - 1}{n^2} = \frac{n -
1}{n}$, by induction, we complete the proof.
\end{proof}

\begin{lemma}\label{lemma:zhlkdsamfls}
  \[ \mathbb{E} \left\| \frac{\partial g (\hat{X}_{k} ; \xi_{k}^{i_{k}}, i_k)
   \mathbf{1}_n}{n} \right\|^2 \leqslant \frac{\sigma^2 M}{n^2} +
   \frac{M^2}{n^2}  \sum_{i = 1}^n p_i  \mathbb{E} \| \nabla f_i (\hat{x}_{k}^{i})\|^2 ,\quad \forall k\ge 0. \]
\end{lemma}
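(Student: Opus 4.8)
The plan is to exploit the sparse structure of $\partial g(\hat X_k;\xi_k^{i_k},i_k)$: since this matrix has a single nonzero column (the $i_k$-th one, equal to $\sum_{j=1}^M\nabla F(\hat x_k^{i_k};\xi_{k,j}^{i_k})$), right-multiplication by $\mathbf 1_n$ simply extracts that column, so that
\[
\frac{\partial g(\hat X_k;\xi_k^{i_k},i_k)\mathbf 1_n}{n}=\frac1n\sum_{j=1}^M\nabla F(\hat x_k^{i_k};\xi_{k,j}^{i_k}).
\]
Hence the left-hand side equals $\tfrac1{n^2}\mathbb E\|\sum_{j=1}^M\nabla F(\hat x_k^{i_k};\xi_{k,j}^{i_k})\|^2$, and the whole argument reduces to a standard bias--variance estimate for a sum of $M$ i.i.d.\ stochastic gradients.

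First I would condition on the worker index $i_k=i$ and on the delayed iterate $\hat X_k$. By the dependence assumption in \Cref{ass:20170614-002451}, the fresh samples $\xi_k$ and the index $i_k$ are independent of $\hat X_k=X_{k-\tau_k}$, so this conditioning is legitimate and the summands $\{\nabla F(\hat x_k^{i};\xi_{k,j}^{i})\}_{j=1}^M$ are i.i.d.\ with mean $\nabla f_i(\hat x_k^i)$ by \eqref{eq:azjlzj}. Writing each summand as its mean plus a zero-mean fluctuation and expanding the square, the cross term vanishes by unbiasedness, the mean contributes $M^2\|\nabla f_i(\hat x_k^i)\|^2$, and independence collapses the fluctuation term to $\sum_{j=1}^M\mathbb E\|\nabla F(\hat x_k^i;\xi_{k,j}^i)-\nabla f_i(\hat x_k^i)\|^2\le M\sigma^2$ by the variance bound \eqref{eq:20170613-221110}. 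This yields the conditional estimate
\[
\mathbb E_{\xi}\Big\|\sum_{j=1}^M\nabla F(\hat x_k^{i};\xi_{k,j}^{i})\Big\|^2\le M\sigma^2+M^2\|\nabla f_i(\hat x_k^i)\|^2.
\]

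Finally I would take the outer expectation over $i_k$, which selects worker $i$ with probability $p_i$, and over $\hat X_k$, turning the (previously fixed) $\|\nabla f_i(\hat x_k^i)\|^2$ into $\mathbb E\|\nabla f_i(\hat x_k^i)\|^2$ and averaging with weights $p_i$. Dividing through by $n^2$ then gives exactly the claimed bound. The only point requiring genuine care is the independence bookkeeping in the conditioning step: one must verify that fixing $\hat X_k$ leaves the $M$ mini-batch gradients i.i.d.\ with mean $\nabla f_i(\hat x_k^i)$, so that the cross term is truly zero and the per-sample variances add to $M\sigma^2$ rather than accumulating an extra factor. Everything else is the routine $\|\sum_j g_j\|^2$ mean/variance decomposition.
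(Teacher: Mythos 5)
Your proposal is correct and follows essentially the same route as the paper's proof: extract the single nonzero column via $\mathbf{1}_n$, average over $i_k$ with weights $p_i$, and apply the standard bias--variance decomposition of the mini-batch sum, with the cross term vanishing by unbiasedness and the fluctuation term bounded by $M\sigma^2$ via \eqref{eq:20170613-221110}. The paper carries out the same steps (slightly more tersely, without spelling out the conditioning on $\hat{X}_k$), so there is nothing to add.
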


\begin{proof}The LHS can be bounded by
  \begin{align*}
  \mathbb{E} \left\| \frac{\partial g (\hat{X}_{k} ; \xi_{k}^{i_{k}}, i_k)
  \mathbf{1}_n}{n} \right\|^2 \overset{(\ref{eq:4zljdslaf})}{=} & \sum_{i =
  1}^n p_i  \mathbb{E} \left\| \frac{\sum_{j = 1}^M \nabla F (\hat{x}_{k}^{i}, \xi_{k, j}^i)}{n} \right\|^2\\
  = & \sum_{i = 1}^n p_i  \mathbb{E} \left\| \frac{\sum_{j = 1}^M (\nabla F
  (\hat{x}_{k}^{i}, \xi_{k, j}^i) - \nabla f_i (\hat{x}_{k}^{i}))}{n}
  \right\|^2 + \sum_{i = 1}^n p_i  \mathbb{E} \left\| \frac{M \nabla f_i (\hat{x}_{k}^{i})}{n} \right\|^2\\
  \overset{(\ref{eq:20170613-221110})}{\leqslant} & \frac{\sigma^2 M}{n^2} +
  \frac{M^2}{n^2}  \sum_{i = 1}^n p_i  \mathbb{E} \| \nabla f_i (\hat{x}_{k}^{i})\|^2 .
\end{align*}
\end{proof}

\begin{lemma}
\label{lemma:20170614-001358}
\begin{align*}
 \sum_{i=1}^{n} p_{i} \mathbb{E} \| \nabla f_{i} (\hat{x}_{k}^{i}) \|^2 \leqslant & 12 L^2 \mathbb{E}\hat{M}_{k} + 6 \varsigma^2 + 2   \mathbb{E} \left\| \sum_{j = 1}^n p_j \nabla f_j (\hat{x}_{k}^{j}) \right\|^2 ,\quad \forall k\ge 0.
\end{align*}
\end{lemma}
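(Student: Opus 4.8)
The plan is to bound $\sum_{i=1}^n p_i \mathbb{E}\|\nabla f_i(\hat{x}_k^i)\|^2$ by separating the $p$-weighted average gradient, which is exactly the quantity $\|\sum_j p_j \nabla f_j(\hat{x}_k^j)\|^2$ appearing on the right-hand side, from the fluctuation of the individual gradients around that average. First I would write $\nabla f_i(\hat{x}_k^i) = \big(\nabla f_i(\hat{x}_k^i) - \sum_j p_j \nabla f_j(\hat{x}_k^j)\big) + \sum_j p_j \nabla f_j(\hat{x}_k^j)$ and apply $\|a+b\|^2 \le 2\|a\|^2 + 2\|b\|^2$. Summing against $p_i$ and using $\sum_i p_i = 1$ immediately produces the term $2\mathbb{E}\|\sum_j p_j \nabla f_j(\hat{x}_k^j)\|^2$ with the correct constant, and it remains to show that the fluctuation term $\sum_i p_i \mathbb{E}\|\nabla f_i(\hat{x}_k^i) - \sum_j p_j \nabla f_j(\hat{x}_k^j)\|^2$ is at most $6L^2\mathbb{E}\hat{M}_k + 3\varsigma^2$.

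For the fluctuation term I would introduce the delayed average model $\bar{x} := \hat{X}_k \mathbf{1}_n / n$ as an intermediate point, splitting $\nabla f_i(\hat{x}_k^i) - \sum_j p_j \nabla f_j(\hat{x}_k^j)$ into three pieces: $\big(\nabla f_i(\hat{x}_k^i) - \nabla f_i(\bar{x})\big)$, $\big(\nabla f_i(\bar{x}) - \nabla f(\bar{x})\big)$, and $\big(\nabla f(\bar{x}) - \sum_j p_j \nabla f_j(\hat{x}_k^j)\big)$. Here I would use the fact, coming from the definition of $f$ in \eqref{eq:4zljdslaf}, that $\nabla f(\bar{x}) = \sum_j p_j \nabla f_j(\bar{x})$, so the third piece equals $\sum_j p_j\big(\nabla f_j(\bar{x}) - \nabla f_j(\hat{x}_k^j)\big)$. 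Applying $\|a+b+c\|^2 \le 3(\|a\|^2 + \|b\|^2 + \|c\|^2)$ and summing against $p_i$, the first piece is controlled by the Lipschitz assumption (\Cref{ass:20170614-002451}, item 1) as $3L^2\sum_i p_i \|\hat{x}_k^i - \bar{x}\|^2$, which by the definitions of $M_k$ and $\hat{M}_k$ is exactly $3L^2\mathbb{E}\hat{M}_k$; the second piece is bounded by $3\varsigma^2$ via the worker-heterogeneity bound \eqref{eq:1}; and the third piece, after applying Jensen's inequality $\|\sum_j p_j v_j\|^2 \le \sum_j p_j \|v_j\|^2$ and then Lipschitzness, contributes another $3L^2\mathbb{E}\hat{M}_k$.

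Adding these gives $6L^2\mathbb{E}\hat{M}_k + 3\varsigma^2$ for the fluctuation term, and multiplying by $2$ and combining with the average-gradient term yields exactly $12L^2\mathbb{E}\hat{M}_k + 6\varsigma^2 + 2\mathbb{E}\|\sum_j p_j \nabla f_j(\hat{x}_k^j)\|^2$. The only real subtlety is choosing the two-then-three-way splitting so that the constants land precisely at $2$, $12$, and $6$: in particular one must apply Jensen's inequality to the averaged third piece \emph{before} invoking Lipschitzness (rather than bounding it term by term with a looser constant), and one must recognize the intermediate point $\bar{x}$ as the common mean that both ties the fluctuation to $\hat{M}_k$ through the definition of $M_k$ and activates the heterogeneity bound \eqref{eq:1}. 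Everything else is a routine application of the elementary inequality $\|\sum_{\ell} a_\ell\|^2 \le m\sum_\ell \|a_\ell\|^2$ together with \Cref{ass:20170614-002451}.
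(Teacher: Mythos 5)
Your proposal is correct and follows essentially the same route as the paper's proof: the same two-way split into the $p$-weighted average gradient plus fluctuation, the same three-way decomposition of the fluctuation through the delayed average $\hat{X}_k\mathbf{1}_n/n$, and the same use of Lipschitzness, the heterogeneity bound \eqref{eq:1}, and Jensen's inequality on the averaged third piece, yielding the identical constants $12$, $6$, and $2$.
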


\begin{proof}
The LHS can be bounded by
\begin{align*}
  \sum_{i = 1}^n p_i  \mathbb{E} \| \nabla f_i (\hat{x}_{k}^{i})\|^2
  \leqslant & \sum_{i = 1}^n p_i  \mathbb{E}  \| \nabla f_i (\hat{x}_{k}^{i}) - \sum_{j = 1}^n p_j \nabla f_j (\hat{x}_{k}^{j}) + \sum_{j = 1}^n p_j
  \nabla f_j (\hat{x}_{k}^{j})\|^2\\
  \leqslant & 2 \sum_{i = 1}^n p_i  \mathbb{E}  \left\| \nabla f_i (\hat{x}_{k}^{i}) - \sum_{j = 1}^n p_j \nabla f_j (\hat{x}_{k}^{j}) \right\|^2 +
  2 \sum_{i = 1}^n p_i  \mathbb{E} \left\| \sum_{j = 1}^n p_j \nabla f_j (\hat{x}_{k}^{j}) \right\|^2 \\
  = & 2 \sum_{i = 1}^n p_i  \mathbb{E}  \left\| \nabla f_i (\hat{x}_{k}^{i}) - \sum_{j = 1}^n p_j \nabla f_j (\hat{x}_{k}^{j}) \right\|^2 +
  2 \mathbb{E} \left\| \sum_{j = 1}^n p_j \nabla f_j (\hat{x}_{k}^{j}) \right\|^2 .\numberthis\label{eq:zjlafmsf}
\end{align*}

For the first term on the RHS we have
\begin{align*}
  & \sum_{i = 1}^n p_i  \mathbb{E}  \left\| \nabla f_i (\hat{x}_{k}^{i}) -
    \sum_{j = 1}^n p_j \nabla f_j (\hat{x}_{k}^{j}) \right\|^2\\
  \leqslant & 3 \sum_{i = 1}^n p_i  \mathbb{E}  \left\| \nabla f_i (\hat{x}_{k}^{i}) - \nabla f_i \left( \frac{\hat{X}_{k}  \mathbf{1}_n}{n}
              \right) \right\|^2 + 3 \sum_{i = 1}^n p_i  \mathbb{E} \left\| \nabla f_i
              \left( \frac{\hat{X}_{k}  \mathbf{1}_n}{n} \right) - \sum_{j = 1}^n p_j
              \nabla f_j \left( \frac{\hat{X}_{k}  \mathbf{1}_n}{n} \right)
              \right\|^2\\
  & + 3 \sum_{i = 1}^n p_i  \mathbb{E}  \left\| \sum_{j = 1}^n p_j \nabla f_j
    (\hat{x}_{k}^{j}) - \sum_{j = 1}^n p_j \nabla f_j \left( \frac{\hat{X}_{k}  \mathbf{1}_n}{n} \right) \right\|^2\\
  \leqslant & 3 L^2  \sum_{i = 1}^n p_i  \mathbb{E}  \left\| \hat{x}_{k}^{i}
              - \frac{\hat{X}_{k}  \mathbf{1}_n}{n} \right\|^2 + 3 \sum_{i = 1}^n p_i
              \mathbb{E}  \left\| \nabla f_i \left( \frac{\hat{X}_{k}  \mathbf{1}_n}{n}
              \right) - \sum_{j = 1}^n p_j \nabla f_j \left( \frac{\hat{X}_{k}
              \mathbf{1}_n}{n} \right) \right\|^2\\
  & + 3 \mathbb{E}  \left\| \sum_{j = 1}^n p_j \nabla f_j (\hat{x}_{k}^{j})
    - \sum_{j = 1}^n p_j \nabla f_j \left( \frac{\hat{X}_{k}
    \mathbf{1}_n}{n} \right) \right\|^2\\
  \leqslant & 3 L^2 \mathbb{E}\hat{M}_{k} + 3 \sum_{i = 1}^n p_i  \mathbb{E}  \left\| \nabla f_i \left( \frac{\hat{X}_{k}  \mathbf{1}_n}{n} \right) - \nabla f \left( \frac{\hat{X}_{k}
              \mathbf{1}_n}{n} \right) \right\|^2 + 3 \sum_{j = 1}^n p_j  \mathbb{E}  \left\| \nabla f_j (\hat{x}_{k}^{j}) - \nabla f_j \left( \frac{\hat{X}_{k}  \mathbf{1}_n}{n} \right)
              \right\|^2\\
  \leqslant & 6 L^2 \mathbb{E}\hat{M}_{k} + 3 \varsigma^2 .
\end{align*}
Plugging this upper bound into \eqref{eq:zjlafmsf} we complete the proof.
\end{proof}

\begin{lemma}
\label{lemma:20170614-010128}
For any $k \ge -1$ we have
  \begin{align*}
  & \mathbb{E} \left\| \frac{X_{k + 1} \mathbf{1}_n}{n} - X_{k + 1} e_i
  \right\|^2\\
    \leqslant &
  2 \gamma^2  (M \sigma^2 + 6 M^{2} \varsigma^2)  \bar{\rho}\\
  & + 2\frac{n-1}{n} M^{2} \gamma^2  \mathbb{E}  \sum_{j = 0}^k \left( 12 L^2 \hat{M}_{j} + 2
  \mathbb{E} \left\| \sum_{i = 1}^n p_i \nabla f_i (\hat{x}_{j}^{i})
  \right\|^2 \right)  \left( \rho^{k - j} + 2 (k - j) \rho^{\frac{k - j}{2}}
  \right) .
  \end{align*}
\end{lemma}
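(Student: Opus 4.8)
The plan is to unroll the recursion $X_{k+1}=X_kW_k-\gamma\,\partial g(\hat X_k;\xi_k^{i_k},i_k)$ and bound the resulting explicit expression for the consensus deviation of worker $i$. Writing $\partial g_j:=\partial g(\hat X_j;\xi_j^{i_j},i_j)=g_je_{i_j}^\top$ with $g_j:=\sum_{m=1}^M\nabla F(\hat x_j^{i_j},\xi_{j,m}^{i_j})$ and using $W_j\mathbf{1}_n=\mathbf{1}_n$ repeatedly, I would obtain
\[
\frac{X_{k+1}\mathbf{1}_n}{n}-X_{k+1}e_i=X_{k+1}\left(\frac{\mathbf{1}_n}{n}-e_i\right)=X_0W_0\cdots W_k\left(\frac{\mathbf{1}_n}{n}-e_i\right)-\gamma\sum_{j=0}^k\partial g_j\,W_{j+1}\cdots W_k\left(\frac{\mathbf{1}_n}{n}-e_i\right).
\]
Because all workers share the same initialization, $X_0=x_0\mathbf{1}_n^\top$, and a product of doubly stochastic matrices keeps column sums equal to one, so the first term vanishes ($x_0-x_0=0$). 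Setting $v_j:=\frac{\mathbf{1}_n}{n}-W_{j+1}\cdots W_ke_i$ (empty product $=I$) and $a_j:=e_{i_j}^\top v_j$, the deviation equals $-\gamma\sum_{j=0}^k a_jg_j$, reducing the claim to bounding $\gamma^2\mathbb{E}\|\sum_{j=0}^k a_jg_j\|^2$. This makes $v_j$ exactly the vector controlled by \Cref{lemma:zhzlxdfmlqwr}, giving $\mathbb{E}\|v_j\|^2\le\frac{n-1}{n}\rho^{k-j}$; note that $v_j$ depends only on $W_{j+1},\dots,W_k$, i.e. on $i_{j+1},\dots,i_k$, not on $i_j$. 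The $k=-1$ case is the trivial $0\le 0+$ nonnegative.

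Next I would expand the square via $|a_j|\le\|v_j\|$ and Cauchy--Schwarz on the inner products, $\mathbb{E}\|\sum_j a_jg_j\|^2\le\sum_{j,j'}\mathbb{E}[\,|a_j|\|g_j\|\,|a_{j'}|\|g_{j'}\|\,]$. For the diagonal terms I condition on $i_j$ and the iterates up to step $j$; there $\xi_j$ and the future matrices $W_{>j}$ are independent, so $\mathbb{E}[a_j^2\|g_j\|^2]\le\mathbb{E}_{W_{>j}}\|v_j\|^2\cdot\mathbb{E}\|g_j\|^2\le\frac{n-1}{n}\rho^{k-j}\mathbb{E}\|g_j\|^2$, producing the $\rho^{k-j}$ part. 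For a cross pair $j<j'$ I would apply a weighted Young inequality $|a_j|\|g_j\|\,|a_{j'}|\|g_{j'}\|\le\tfrac12\rho^{-(k-j)/2}a_j^2\|g_j\|^2+\tfrac12\rho^{(k-j)/2}a_{j'}^2\|g_{j'}\|^2$ and bound each expectation by the same conditioning argument. The first piece contributes $\frac{n-1}{n}\rho^{(k-j)/2}\mathbb{E}\|g_j\|^2$ for each of the $(k-j)$ indices $j'>j$, which is precisely where $(k-j)\rho^{(k-j)/2}$ is born; the second piece decays like $\rho^{3(k-j')/2}$ and is dominated. Collecting everything yields $\mathbb{E}\|\sum_j a_jg_j\|^2\le2\frac{n-1}{n}\sum_{j=0}^k\mathbb{E}\|g_j\|^2\big(\rho^{k-j}+2(k-j)\rho^{(k-j)/2}\big)$.

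Finally I would insert the second-moment bound on $g_j$. Since $\|\partial g_j\mathbf{1}_n/n\|^2=\|g_j\|^2/n^2$, \Cref{lemma:zhlkdsamfls} together with \Cref{lemma:20170614-001358} gives $\mathbb{E}\|g_j\|^2\le(M\sigma^2+6M^2\varsigma^2)+M^2\big(12L^2\hat M_j+2\mathbb{E}\|\sum_ip_i\nabla f_i(\hat x_j^i)\|^2\big)$. Splitting the sum into the constant part and the $j$-dependent part, the constant part carries the geometric series $\sum_{m\ge0}(\rho^m+2m\rho^{m/2})=\frac{1}{1-\rho}+\frac{2\sqrt{\rho}}{(1-\sqrt{\rho})^2}=\frac{n}{n-1}\bar{\rho}$, turning $2\gamma^2\frac{n-1}{n}(M\sigma^2+6M^2\varsigma^2)\cdot\frac{n}{n-1}\bar\rho$ into the stated $2\gamma^2(M\sigma^2+6M^2\varsigma^2)\bar\rho$, while the $j$-dependent part reproduces the second sum verbatim.

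The main obstacle is the cross-term analysis: the coefficients $a_j,a_{j'}$ are correlated through the shared future averaging matrices, and each $a_j$ is further coupled to $g_j$ via the common worker index $i_j$, so expectations cannot simply be factored. The device that makes it work is to split every mixed product by a weighted Young inequality \emph{before} taking expectations, which decouples the two factors and lets \Cref{lemma:zhzlxdfmlqwr} act on each one alone; choosing the weight $\rho^{\mp(k-j)/2}$ is exactly what converts the summation over $j'>j$ into the linear factor $(k-j)$ multiplying $\rho^{(k-j)/2}$.
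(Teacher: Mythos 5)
Your route is essentially the paper's: unroll the recursion so the initialization term vanishes, reduce to $\gamma^2\,\mathbb{E}\bigl\|\sum_{j=0}^k\partial g_j\bigl(\tfrac{\mathbf{1}_n}{n}-\prod_{q=j+1}^kW_qe_i\bigr)\bigr\|^2$, control the vectors $\tfrac{\mathbf{1}_n}{n}-\prod_{q=j+1}^kW_qe_i$ via \Cref{lemma:zhzlxdfmlqwr}, separate diagonal from cross terms with a weighted Young inequality, and finish with \Cref{lemma:zhlkdsamfls} and \Cref{lemma:20170614-001358}. The one structural difference is that the paper first splits $\partial g_j=(\partial g_j-M\partial f(\hat X_j,i_j))+M\partial f(\hat X_j,i_j)$ and bounds the zero-mean part ($A_1$) and the systematic part ($A_2$) separately, whereas you keep $g_j$ intact and inject the second-moment bound only at the end; that rearrangement is harmless, your independence bookkeeping ($v_j$ depends only on $i_{j+1},\dots,i_k$, so $\|v_j\|^2$ decouples from $\|g_j\|^2$) is sound, and your final assembly of the constants is correct \emph{conditional on} your intermediate inequality $\mathbb{E}\|\sum_ja_jg_j\|^2\le 2\tfrac{n-1}{n}\sum_j\mathbb{E}\|g_j\|^2\bigl(\rho^{k-j}+2(k-j)\rho^{(k-j)/2}\bigr)$.

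That intermediate inequality is where the gap is. With your weights, the cross contribution $2\sum_{j<j'}\mathbb{E}[|a_j|\|g_j\|\,|a_{j'}|\|g_{j'}\|]$ is bounded by $\sum_{j<j'}\bigl[\rho^{-(k-j)/2}\mathbb{E}(a_j^2\|g_j\|^2)+\rho^{(k-j)/2}\mathbb{E}(a_{j'}^2\|g_{j'}\|^2)\bigr]$. The first half indeed produces $\tfrac{n-1}{n}\sum_j(k-j)\rho^{(k-j)/2}\mathbb{E}\|g_j\|^2$. But the second half, resummed for fixed $j'$, is
\begin{align*}
\tfrac{n-1}{n}\sum_{j'}\rho^{k-j'}\mathbb{E}\|g_{j'}\|^2\sum_{j=0}^{j'-1}\rho^{(k-j)/2}
\;\le\;\tfrac{n-1}{n}\,\tfrac{\sqrt{\rho}}{1-\sqrt{\rho}}\sum_{j'}\rho^{3(k-j')/2}\,\mathbb{E}\|g_{j'}\|^2,
\end{align*}
i.e.\ it attaches a coefficient of order $\tfrac{\sqrt{\rho}}{1-\sqrt{\rho}}\rho^{3(k-j')/2}$ to $\mathbb{E}\|g_{j'}\|^2$. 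This is \emph{not} dominated by the remaining budget $\rho^{k-j'}+3(k-j')\rho^{(k-j')/2}$: at $j'=k$ the budget is $1$ while the contribution is $\tfrac{\sqrt{\rho}}{1-\sqrt{\rho}}$, which already exceeds $1$ for $\rho>1/4$ and diverges as $\rho\to1$. So "the second piece decays like $\rho^{3(k-j')/2}$ and is dominated" is false as a statement about the \emph{sum}; the stated inequality does not follow from these weights. To repair it you must either symmetrize the Young weight so that each unordered pair is charged $\rho^{(k-\min\{j,j'\})/2}$ against \emph{both} gradient norms (which is what the paper attempts), or accept an extra $O\bigl(\tfrac{\sqrt{\rho}}{1-\sqrt{\rho}}\rho^{(k-j)/2}\bigr)$ term in the per-index coefficient, which changes the lemma's constant (though only by an amount of the same order as $\bar{\rho}$, so the downstream theorems survive with adjusted constants). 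In fairness, the paper's own treatment of $A_2$ hides the identical difficulty in the step that equates $\sum_{j\ne j'}\|\partial f(\hat X_j,i_j)\|^2\rho^{(k-\min\{j,j'\})/2}$ with $2\sum_{j<j'}\|\partial f(\hat X_j,i_j)\|^2\rho^{(k-j)/2}$, which silently discards the pairs where the larger index carries the gradient norm; you have reproduced the paper's argument faithfully, including its weakest step, but your write-up makes the unjustified domination claim explicit, and it needs to be fixed rather than asserted.
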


\begin{proof}
Note that for $k = -1$, we have
  \begin{align*}
  & \mathbb{E} \left\| \frac{X_{k + 1} \mathbf{1}_n}{n} - X_{k + 1} e_i
  \right\|^2 = 0.
  \end{align*}

  Also note that the columns of $X_{0}$ are the same (all workers start with the same
  model), we have $X_{0}W_{k} = X_{0}$ for all $k$ and
  $X_{0}\mathbf{1}_{n}/n - X_{0}e_{i} = 0,\forall i$. It follows that
\begin{align*}
  & \mathbb{E}  \left\| \frac{X_{k + 1}  \mathbf{1}_n}{n} - X_{k + 1} e_i
  \right\|^2\\
  = & \mathbb{E}  \left\| \frac{X_k  \mathbf{1}_n - \gamma \partial g (\hat{X}_{k} ; \xi_{k}^{i_{k}}, i_k) \mathbf{1}_n}{n} - (X_k W_k e_i - \gamma
  \partial g (\hat{X}_{k}, \xi_{k}^{i_{k}}, i_k) e_i) \right\|^2\\
  = & \mathbb{E}  \left\| \frac{X_0  \mathbf{1}_n - \sum_{j = 0}^k \gamma
  \partial g (\hat{X}_{j} ; \xi_{j}^{i_{j}}, i_j) \mathbf{1}_n}{n} - \left(
  X_0  \prod_{j = 0}^k W_j e_i - \sum_{j = 0}^k \gamma \partial g (\hat{X}_{j} ; \xi_{j}^{i_{j}}, i_j) \prod_{q = j + 1}^k W_q e_i \right)
  \right\|^2\\
  = & \mathbb{E}  \left\| - \sum_{j = 0}^k \gamma \partial
  g (\hat{X}_{j} ; \xi_{j}^{i_{j}}, i_j) \frac{\mathbf{1}_n}{n} + \sum_{j =
  0}^k \gamma \partial g (\hat{X}_{j} ; \xi_{j}^{i_{j}}, i_j) \prod_{q = j +
  1}^k W_q e_i \right\|^2\\
  = & \gamma^2  \mathbb{E} \left\|
  \sum_{j = 0}^k \partial g (\hat{X}_{j}, \xi_{j}^{i_{j}}, i_j) \left(
  \frac{\mathbf{1}_n}{n} - \prod_{q = j + 1}^k W_q e_i \right) \right\|^2\\
  \leqslant & 2 \gamma^2 \underbrace{\mathbb{E} \left\| \sum_{j = 0}^k
  (\partial g (\hat{X}_{j}, \xi_{j}^{i_{j}}, i_j) - M \partial f (\hat{X}_{j}, i_j)) \left( \frac{\mathbf{1}_n}{n} - \prod_{q = j + 1}^k W_q e_i
  \right) \right\|^2}_{A_1}\\
  & + 2 M^{2}\gamma^2 \underbrace{\mathbb{E} \left\| \sum_{j = 0}^k \partial f
  (\hat{X}_{j}, i_j) \left( \frac{\mathbf{1}_n}{n} - \prod_{q = j + 1}^k
  W_q e_i \right) \right\|^2}_{A_2} .\numberthis \label{eq:Wed Jul 19 11:16:27 EDT 2017}
\end{align*}

For $A_1$,
\begin{align*}
  A_1 = & \mathbb{E} \left\| \sum_{j = 0}^k (\partial g (\hat{X}_{j},
  \xi_{j}^{i_{j}}, i_j) -M  \partial f (\hat{X}_{j}, i_j)) \left(
  \frac{\mathbf{1}_n}{n} - \prod_{q = j + 1}^k W_q e_i \right) \right\|^2\\
  = & \underbrace{\sum_{j = 0}^k \mathbb{E} \left\| (\partial g (\hat{X}_{j}, \xi_{j}^{i_{j}}, i_j) - M\partial f (\hat{X}_{j}, i_j)) \left(
  \frac{\mathbf{1}_n}{n} - \prod_{q = j + 1}^k W_q e_i \right)
  \right\|^2}_{A_3}\\
  & + 2 \underbrace{\mathbb{E}  \sum_{k \geqslant j > j' \geqslant 0}
  \begin{array}{c}
    \left\langle (\partial g (\hat{X}_{j}, \xi_{j}^{i_{j}}, i_j) - M \partial f
    (\hat{X}_{j}, i_j)) \left( \frac{\mathbf{1}_n}{n} - \prod_{q = j + 1}^k
    W_q e_i \right), \right.\\
    \left. (\partial g (\hat{X}_{j'} \xi_{j'}^{i_{j'}}, i_{j'}) -
    M\partial f (\hat{X}_{j'}, i_{j'})) \left( \frac{\mathbf{1}_n}{n} -
    \prod_{q = j' + 1}^k W_q e_i \right) \right\rangle
  \end{array}}_{A_4} .
\end{align*}

$A_3$ can be bounded by a constant:
\begin{align*}
  A_3 = & \sum_{j = 0}^k \mathbb{E} \left\| (\partial g (\hat{X}_{j},
  \xi_{j}^{i_{j}}, i_j) - M\partial f (\hat{X}_{j}, i_j)) \left(
  \frac{\mathbf{1}_n}{n} - \prod_{q = j + 1}^{k} W_q e_i \right)
  \right\|^2\\
  \leqslant & \sum_{j = 0}^k \mathbb{E}  \| \partial g (\hat{X}_{j},
  \xi_{j}^{i_{j}}, i_j) - M\partial f (\hat{X}_{j}, i_j)\|^2  \left\|
  \frac{\mathbf{1}_n}{n} - \prod_{q = j + 1}^{k} W_q e_i \right\|^2\\
  \overset{\text{\Cref{lemma:zhzlxdfmlqwr}}}{\leqslant} & \frac{n-1}{n}\sum_{j = 0}^k \mathbb{E}  \| \partial g (\hat{X}_{j},
  \xi_{j}^{i_{j}}, i_j) - M\partial f (\hat{X}_{j}, i_j)\|^2 \rho^{k - j}\\
  \overset{\text{\Cref{ass:20170614-002451}:5}}{\leqslant} &\frac{n-1}{n} M \sigma^2  \sum_{j = 0}^k \rho^{k - j} \leqslant \frac{n-1}{n} \frac{M \sigma^2}{1 - \rho} .
\end{align*}

$A_4$ can be bounded by another constant:
\begin{align*}
  A_4 = & \sum_{k \geqslant j > j' \geqslant 0} \mathbb{E} \begin{array}{c}
    \left\langle (\partial g (\hat{X}_{j}, \xi_{j}^{i_{j}}, i_j) -M \partial f
    (\hat{X}_{j}, i_j)) \left( \frac{\mathbf{1}_n}{n} - \prod_{q = j + 1}^k
    W_q e_i \right), \right.\\
    \left. (\partial g (\hat{X}_{j'}, \xi_{j'}^{i_{j'}}, i_{j'}) -M
    \partial f (\hat{X}_{j'}, i_{j'})) \left( \frac{\mathbf{1}_n}{n} -
    \prod_{q = j' + 1}^k W_q e_i \right) \right\rangle
  \end{array}\\
  \leqslant & \sum_{k \geqslant j > j' \geqslant 0} \begin{array}{c}
    \mathbb{E}  \| \partial g (\hat{X}_{j}, \xi_{j}^{i_{j}}, i_j) - M\partial f
    (\hat{X}_{j}, i_j)\|  \left\| \frac{\mathbf{1}_n}{n} - \prod_{q = j +
    1}^k W_q e_i \right\| \times\\
    \| \partial g (\hat{X}_{j'}, \xi_{j'}^{i_{j'}}, i_{j'}) - M\partial f
    (\hat{X}_{j'}, i_{j'})\|  \left\| \frac{\mathbf{1}_n}{n} - \prod_{q
    = j' + 1}^k W_q e_i \right\|
  \end{array}\\
  \leqslant & \mathbb{E}  \sum_{k \geqslant j > j' \geqslant 0} \left(
  \begin{array}{c}
    \frac{\left\| \frac{\mathbf{1}_n}{n} - \prod_{q = j + 1}^k W_q e_i
    \right\|^2  \left\| \frac{\mathbf{1}_n}{n} - \prod_{q = j' + 1}^k W_q e_i
    \right\|^2}{2 \alpha_{j, j'}}\\
    + \frac{\| \partial g (\hat{X}_{j}, \xi_{j}^{i_{j}}, i_j) - M\partial f
    (\hat{X}_{j}, i_j)\|^2 \| \partial g (\hat{X}_{j'}, \xi_{j'}^{i_{j'}}, i_{j'}) -M \partial f (\hat{X}_{j'}, i_{j'})\|^2}{2 /
    \alpha_{j, j'}}
  \end{array} \right), \forall \alpha_{j, j'} > 0\\
  \overset{(\ref{eq:20170613-221110})}{\leqslant} & \mathbb{E}  \sum_{k
  \geqslant j > j' \geqslant 0} \left( \frac{\left\| \frac{\mathbf{1}_n}{n} -
  \prod_{q = j + 1}^k W_q e_i \right\|^2  \left\| \frac{\mathbf{1}_n}{n} -
  \prod_{q = j' + 1}^k W_q e_i \right\|^2}{2 \alpha_{j, j'}} + \frac{M^2
  \sigma^4}{2 / \alpha_{j, j'}} \right), \forall \alpha_{j, j'} > 0\\
  \leqslant & \mathbb{E}  \sum_{k \geqslant j > j' \geqslant 0} \left(\frac{n-1}{n}
  \frac{\left\| \frac{\mathbf{1}_n}{n} - \prod_{q = j' + 1}^k W_q e_i
  \right\|^2}{2 \alpha_{j, j'}} + \frac{M^2 \sigma^4}{2 / \alpha_{j, j'}}
  \right), \forall \alpha_{j, j'} > 0\\
  \overset{\text{\Cref{lemma:zhzlxdfmlqwr}}}{\leqslant} & \mathbb{E}  \sum_{k \geqslant j > j' \geqslant 0} \left(
  \left(  \frac{n-1}{n}\right)^{2}\frac{\rho^{k - j'}}{2 \alpha_{j, j'}} + \frac{M^2 \sigma^4}{2 / \alpha_{j,
  j'}} \right) , \forall \alpha_{j, j'} > 0.
\end{align*}
We can choose $\alpha_{j, j'} > 0$ to make the term in the last step become
$\frac{n-1}{n}\sum_{k \geqslant j > j' \geqslant 0}^k \rho^{\frac{k - j'}{2}} M \sigma^2$ (by
applying inequality of arithmetic and geometric means).
Thus
\begin{align*}
  A_4 \leqslant &\frac{n-1}{n} \sum_{k \geqslant j > j' \geqslant 0}^k \rho^{\frac{k -
                  j'}{2}} M \sigma^2 \leqslant \frac{n-1}{n}M \sigma^2  \sum_{j' = 0}^k \sum_{j = j' + 1}^k
                  \rho^{\frac{k - j'}{2}}\\
  =&\frac{n-1}{n} M \sigma^2  \sum_{j' = 0}^k (k - j') \rho^{\frac{k
     - j'}{2}} \leqslant \frac{n-1}{n}M \sigma^2  \frac{\sqrt{\rho}}{\left( 1 - \sqrt{\rho}
  \right)^2} .
\end{align*}

Putting $A_3$ and $A_4$ back into $A_1$ we obtain:
\begin{align}
  A_1 \leqslant &\frac{n-1}{n} M\sigma^2 \left( \frac{1}{1 - \rho} + \frac{2
  \sqrt{\rho}}{\left( 1 - \sqrt{\rho} \right)^2} \right) = M\sigma^{2}\bar{\rho}. \label{eq:Wed Jul 19 11:15:27 EDT 2017}
\end{align}

We then start bounding $A_2$:
\begin{align*}
  A_2 = & \mathbb{E} \left\| \sum_{j = 0}^k \partial f (\hat{X}_{j}, i_j)
  \left( \frac{\mathbf{1}_n}{n} - \prod_{q = j + 1}^k W_q e_i \right)
  \right\|^2\\
  = & \mathbb{E}  \sum_{j = 0}^k \left\| \partial f (\hat{X}_{j}, i_j)
  \left( \frac{\mathbf{1}_n}{n} - \prod_{q = j + 1}^k W_q e_i \right)
  \right\|^2\\
  & + 2 \mathbb{E}  \sum_{j = 0}^k \sum_{j' = j + 1}^k \left\langle \partial
  f (\hat{X}_{j}, i_j) \left( \frac{\mathbf{1}_n}{n} - \prod_{q = j + 1}^k
  W_q e_i \right), \partial f (\hat{X}_{j'}, i_{j'}) \left(
  \frac{\mathbf{1}_n}{n} - \prod_{q = j' + 1}^k W_q e_i \right)
  \right\rangle\\
  \overset{\text{{\Cref{lemma:zhzlxdfmlqwr}},(\ref{eq:4zljdslaf})}}{\leqslant}
  &\frac{n-1}{n} \mathbb{E}  \sum_{j = 0}^k \left( \sum_{i = 1}^n p_i \| \nabla f_i (\hat{x}_{j}^{i})\|^2 \right) \rho^{k - j}\\
  & + 2 \mathbb{E}  \sum_{j = 0}^k \sum_{j' = j + 1}^k \| \partial f (\hat{X}_{j}, i_j)\|  \left\| \frac{\mathbf{1}_n}{n} - \prod_{q = j + 1}^k W_q
  e_i \right\|  \| \partial f (\hat{X}_{j'}, i_{j'})\|  \left\|
  \frac{\mathbf{1}_n}{n} - \prod_{q = j' + 1}^k W_q e_i \right\| . \numberthis
  \label{eq:Wed Jul 19 10:57:29 EDT 2017}
\end{align*}
For the second term:
\begin{align*}
  & \mathbb{E}  \sum_{j = 0}^k \sum_{j' = j + 1}^k \| \partial f (\hat{X}_{j}, i_j)\|  \left\| \frac{\mathbf{1}_n}{n} - \prod_{q = j + 1}^k W_q
  e_i \right\|  \| \partial f (\hat{X}_{j'}, i_{j'})\|  \left\|
  \frac{\mathbf{1}_n}{n} - \prod_{q = j' + 1}^k W_q e_i \right\|\\
  \leqslant & \mathbb{E}  \sum_{j = 0}^k \sum_{j' = j + 1}^k \left( \frac{\|
  \partial f (\hat{X}_{j}, i_j)\|^2 \| \partial f (\hat{X}_{j'},
  i_{j'})\|^2}{2 \alpha_{j, j'}} + \frac{\left\| \frac{\mathbf{1}_n}{n} -
  \prod_{q = j + 1}^k W_q e_i \right\|^2  \left\| \frac{\mathbf{1}_n}{n} -
  \prod_{q = j' + 1}^k W_q e_i \right\|^2}{2 / \alpha_{j, j'}} \right),
  \forall \alpha_{j, j'} > 0\\
  \overset{\text{{\Cref{lemma:zhzlxdfmlqwr}}}}{\leqslant} & \frac{1}{2}
  \mathbb{E}  \sum_{j \neq j'}^k \left( \frac{\| \partial f (\hat{X}_{j},
  i_j)\|^2 \| \partial f (\hat{X}_{j'}, i_{j'})\|^2}{2 \alpha_{j, j'}} +
  \frac{\rho^{k - \min \{j, j' \}}}{2 / \alpha_{j, j'}} \left(\frac{n-1}{n}\right)^{2} \right), \quad \forall
  \alpha_{j, j'} > 0, \alpha_{j, j'} = \alpha_{j', j} .
\end{align*}
By applying inequality of arithmetic and geometric means to the term in the
last step we can choose $\alpha_{j, j'} > 0$ such that
\begin{align*}
  & \mathbb{E}  \sum_{j = 0}^k \sum_{j' = j + 1}^k \| \partial f (\hat{X}_{j}, i_j)\|  \left\| \frac{\mathbf{1}_n}{n} - \prod_{q = j + 1}^k W_q
  e_i \right\|  \| \partial f (\hat{X}_{j'}, i_{j'})\|  \left\|
  \frac{\mathbf{1}_n}{n} - \prod_{q = j' + 1}^k W_q e_i \right\|\\
  \leqslant & \frac{n-1}{2n}  \mathbb{E}  \sum_{j \neq j'}^k \left( \| \partial f
  (\hat{X}_{j}, i_j)\|\| \partial f (\hat{X}_{j'}, i_{j'})\|
  \rho^{\frac{k - \min \{j, j' \}}{2}} \right)\\
  \leqslant &\frac{n-1}{2n} \mathbb{E}  \sum_{j \neq j'}^k \left( \frac{\|
  \partial f (\hat{X}_{j}, i_j)\|^2 +\| \partial f (\hat{X}_{j'},
  i_{j'})\|^2}{2} \rho^{\frac{k - \min \{j, j' \}}{2}} \right)\\
  = & \frac{n-1}{2n}  \mathbb{E}  \sum_{j \neq j'}^k \left( \| \partial f (\hat{X}_{j}, i_j)\|^2 \rho^{\frac{k - \min \{j, j' \}}{2}} \right) = \frac{n-1}{n}\sum_{j =
  0}^k \sum_{j' = j + 1}^k \left( \mathbb{E} \| \partial f (\hat{X}_{j},
  i_j)\|^2 \rho^{\frac{k - j}{2}} \right)\\
  = &\frac{n-1}{n} \sum_{j = 0}^k \left( \sum_{i = 1}^n p_i  \mathbb{E} \| \nabla f_i (\hat{x}_{j}^{i})\|^2 \right)  (k - j) \rho^{\frac{k - j}{2}} \numberthis
  \label{eq:Wed Jul 19 10:56:39 EDT 2017} .
\end{align*}
It follows from (\ref{eq:Wed Jul 19 10:56:39 EDT 2017}) and (\ref{eq:Wed Jul
19 10:57:29 EDT 2017}) that
\begin{align*}
  A_2 \leqslant & \frac{n-1}{n} \mathbb{E}  \sum_{j = 0}^k \left( \sum_{i = 1}^n p_i \|
  \nabla f_i (\hat{x}_{j}^{i})\|^2 \right)  \left( \rho^{k - j} + 2 (k - j)
  \rho^{\frac{k - j}{2}} \right)\\
  \overset{\text{{\Cref{lemma:20170614-001358}}}}{\leqslant} & \frac{n-1}{n}  \sum_{j = 0}^k
  \left( 12 L^2  \mathbb{E} \hat{M}_{j} + 6 \varsigma^2 + 2 \mathbb{E}
  \left\| \sum_{j' = 1}^n p_{j'} \nabla f_{j'} (\hat{x}_{j}^{j'}) \right\|^2
  \right)  \left( \rho^{k - j} + 2 (k - j) \rho^{\frac{k - j}{2}} \right)\\
  \leqslant & \frac{n-1}{n}\sum_{j = 0}^k \left( 12 L^2  \mathbb{E} \hat{M}_{j} + 2
  \mathbb{E} \left\| \sum_{j' = 1}^n p_{j'} \nabla f_{j'} (\hat{x}_{j}^{j'})
  \right\|^2 \right)  \left( \rho^{k - j} + 2 (k - j) \rho^{\frac{k - j}{2}} \right)\\
  & + 6 \varsigma^2  \underbrace{\frac{n-1}{n}\left( \frac{1}{1 - \rho} + \frac{2 \sqrt{\rho}}{\left(
  1 - \sqrt{\rho} \right)^2} \right)}_{=\bar{\rho}} .\numberthis \label{eq:Wed Jul 19 11:15:55 EDT 2017}
\end{align*}

Finally from \eqref{eq:Wed Jul 19 11:15:27 EDT 2017}, \eqref{eq:Wed Jul 19
  11:15:55 EDT 2017} and \eqref{eq:Wed Jul 19 11:16:27 EDT 2017} we obtain
\begin{align*}
  & \mathbb{E}  \left\| \frac{X_{k + 1}  \mathbf{1}_n}{n} - X_{k + 1} e_i
  \right\|^2\\
  \leqslant & 2 \gamma^2 A_1 + 2 M^{2}\gamma^2 A_2\\
  \leqslant & 2 \gamma^2 M \sigma^2  \bar{\rho}\\
  & + 2 \gamma^2 M^{2} \mathbb{E}  \sum_{j = 0}^k \left( 12 L^2 \hat{M}_{j} + 2
  \mathbb{E} \left\| \sum_{i = 1}^n p_i \nabla f_i (\hat{x}_{j}^{i})
  \right\|^2 \right)  \left( \rho^{k - j} + 2 (k - j) \rho^{\frac{k - j}{2}}
  \right) + 12 \gamma^2 M^{2}\varsigma^2  \bar{\rho}\\
  = & 2 \gamma^2  (M \sigma^2 + 6 M^{2} \varsigma^2)  \bar{\rho}\\
  & + 2\frac{n-1}{n} M^{2} \gamma^2  \mathbb{E}  \sum_{j = 0}^k \left( 12 L^2 \hat{M}_{j} + 2
  \mathbb{E} \left\| \sum_{i = 1}^n p_i \nabla f_i (\hat{x}_{j}^{i})
  \right\|^2 \right)  \left( \rho^{k - j} + 2 (k - j) \rho^{\frac{k - j}{2}}
  \right) .
\end{align*}

This completes the proof.

\end{proof}

\begin{lemma}
  \label{lemma:20170614-114822}
While $C_1 > 0$, we have
\begin{align*}
  \frac{\sum_{k = 0}^{K - 1} \mathbb{E} \hat{M}_{k}}{K} \leqslant & \frac{2
  \gamma^2  (M \sigma^2 + 6 M^2 \varsigma^2)  \bar{\rho} + \frac{4 \gamma^2
  M^2}{K}  \left( T \frac{n - 1}{n} + \bar{\rho} \right)  \sum_{k = 0}^{K - 1}
  \mathbb{E} \left\| \sum_{i = 1}^n p_i \nabla f_i (\hat{x}_{k}^{i})
  \right\|^2}{C_1}, \quad \forall K \ge 1.
\end{align*}
\end{lemma}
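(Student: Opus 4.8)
The plan is to obtain the bound directly from \Cref{lemma:20170614-010128}, which already controls each deviation $\mathbb{E}\|\frac{X_{k+1}\mathbf{1}_n}{n}-X_{k+1}e_i\|^2$, and then to solve the resulting \emph{self-referential} inequality for $\sum_{k=0}^{K-1}\mathbb{E}\hat M_k$. First I would pass from the per-coordinate deviations to $M_k$: since $M_{k+1}=\sum_{i=1}^n p_i\|\frac{X_{k+1}\mathbf{1}_n}{n}-X_{k+1}e_i\|^2$ and the right-hand side of \Cref{lemma:20170614-010128} does not depend on $i$, averaging that bound against the weights $p_i$ (which sum to $1$) gives, for every $m\ge 0$,
\[
\mathbb{E}M_m \le 2\gamma^2(M\sigma^2+6M^2\varsigma^2)\bar{\rho}+2\tfrac{n-1}{n}M^2\gamma^2\sum_{j=0}^{m-1}a_j\left(\rho^{m-1-j}+2(m-1-j)\rho^{(m-1-j)/2}\right),
\]
where $a_j:=12L^2\mathbb{E}\hat M_j+2\mathbb{E}\|\sum_{i=1}^n p_i\nabla f_i(\hat x_j^i)\|^2$ collects the two driving terms. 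Alongside this I would record the two elementary series $\sum_{t\ge 0}\rho^t=\frac{1}{1-\rho}$ and $\sum_{t\ge 0}t\rho^{t/2}=\frac{\sqrt{\rho}}{(1-\sqrt{\rho})^2}$, so that $\tfrac{n}{n-1}\bar{\rho}=\sum_{t\ge 0}(\rho^t+2t\rho^{t/2})$; this is what will let the convolution weights collapse into $\bar{\rho}$.

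The crux is the third step: sum the displayed bound over the stale indices $m=k-\tau_k$ for $k=0,\dots,K-1$ and interchange the order of summation in the convolution. For the undelayed problem the inner geometric sum simply reproduces $\tfrac{n}{n-1}\bar{\rho}$ times $\sum_j a_j$; the staleness only shifts each outer index downward by at most $T$, and because $0\le\tau_k\le T$ each value of the summation variable is revisited over a window of width at most $T$. Making this precise — for instance by Abel summation using that the cumulative counts $\#\{k:k-\tau_k=m\}$ differ from the undelayed counts by a total amount controlled by $T$ — converts the single prefactor $\bar{\rho}$ into $\left(T\tfrac{n-1}{n}+\bar{\rho}\right)$, yielding
\[
\sum_{k=0}^{K-1}\mathbb{E}\hat M_k \le 2K\gamma^2(M\sigma^2+6M^2\varsigma^2)\bar{\rho}+2M^2\gamma^2\left(T\tfrac{n-1}{n}+\bar{\rho}\right)\sum_{j=0}^{K-1}a_j.
\]
I expect this staleness bookkeeping to be the main obstacle, precisely because the convolution weights $\rho^t+2t\rho^{t/2}$ are \emph{not} monotone in $t$, so one must exploit their decay rather than a crude $(T+1)$-fold multiplicity count, which would spoil the constant.

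Finally I would expand $\sum_{j=0}^{K-1}a_j=12L^2\sum_{k=0}^{K-1}\mathbb{E}\hat M_k+2\sum_{k=0}^{K-1}\mathbb{E}\|\sum_{i=1}^n p_i\nabla f_i(\hat x_k^i)\|^2$. The first piece reproduces $\sum_{k}\mathbb{E}\hat M_k$ on the right with coefficient $24M^2L^2\gamma^2\left(T\tfrac{n-1}{n}+\bar{\rho}\right)=1-C_1$; moving it to the left and dividing by $C_1>0$ — which is legitimate exactly under the lemma's hypothesis — isolates $\sum_k\mathbb{E}\hat M_k$ with the $C_1$ denominator, while the second piece contributes the $4\gamma^2 M^2\left(T\tfrac{n-1}{n}+\bar{\rho}\right)\sum_k\mathbb{E}\|\sum_i p_i\nabla f_i(\hat x_k^i)\|^2$ term and the variance constant carries $\bar\rho$. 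Dividing through by $K$ then gives the claimed inequality.
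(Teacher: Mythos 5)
Your proposal is correct and follows essentially the same route as the paper's proof: average \Cref{lemma:20170614-010128} over $i$ with the weights $p_i$ to get $\mathbb{E}\hat{M}_k$, sum over $k$, interchange the order of summation so that the convolution weights collapse into the factor $T\tfrac{n-1}{n}+\bar{\rho}$, and then move the resulting $24M^2L^2\gamma^2\left(T\tfrac{n-1}{n}+\bar{\rho}\right)\sum_k\mathbb{E}\hat{M}_k$ term to the left-hand side to produce the $C_1$ denominator. The staleness bookkeeping you flag as the main obstacle is dispatched in the paper by the single asserted bound $\sum_{k=j+1}^{\infty}\bigl(\rho^{h_k}+2h_k\rho^{h_k/2}\bigr)\leqslant T+\sum_{h=0}^{\infty}\bigl(\rho^{h}+2h\rho^{h/2}\bigr)$ with $h_k=\max\{k-\tau_k-1-j,0\}$ --- at most $T$ extra unit terms from the clamped exponents plus one copy of the full series --- so your sketch is at essentially the same level of detail as the source on this point.
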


\begin{proof}
From \Cref{lemma:20170614-010128} and noting that $\hat{X}_{k} = X_{k-\tau_{k}}$, we have
\begin{align*}
  & \mathbb{E}  \left\| \frac{\hat{X}_{k}  \mathbf{1}_n}{n} - \hat{X}_{k} e_i \right\|^2\\
  \leqslant & 2 \gamma^2  (M \sigma^2 + 6 M^2 \varsigma^2)  \bar{\rho}\\
  & + 2 \frac{n - 1}{n} M^2 \gamma^2  \sum_{j = 0}^{k - \tau_k - 1} \left( 12
  L^2  \mathbb{E} \hat{M}_{j} + 2 \mathbb{E} \left\| \sum_{i = 1}^n p_i
  \nabla f_i (\hat{x}_{j}^{i}) \right\|^2 \right)  \left( \rho^{k - \tau_k -
  1 - j} + 2 (k - \tau_k - 1 - j) \rho^{\frac{k - \tau_k - 1 - j}{2}} \right)
  .
\end{align*}
By averaging from $i = 1$ to $n$ with distribution $\mathcal{I}$ we obtain
\begin{align*}
  & \mathbb{E} \hat{M}_{k} = \sum_{i = 1}^n p_i  \mathbb{E}  \left\|
  \frac{\hat{X}_{k }  \mathbf{1}_n}{n} - \hat{X}_{k} e_i \right\|^2\\
  \leqslant & 2 \gamma^2  (M \sigma^2 + 6 M^2 \varsigma^2)  \bar{\rho}\\
  & + 2 M^2 \gamma^2  \frac{n - 1}{n} \sum_{j = 0}^{k - \tau_k - 1} \left( 12
  L^2  \mathbb{E} \hat{M}_{j} + 2 \mathbb{E} \left\| \sum_{i = 1}^n p_i
  \nabla f_i (\hat{x}_{j}^{i}) \right\|^2 \right)  \left( \rho^{k - \tau_k -
  1 - j} + 2 (k - \tau_k - 1 - j) \rho^{\frac{k - \tau_k - 1 - j}{2}} \right)
  .
\end{align*}
It follows that
\begin{align*}
  \frac{\sum_{k = 0}^{K - 1} \mathbb{E} \hat{M}_{k}}{K} \leqslant & 2
  \gamma^2  (M \sigma^2 + 6 M^2 \varsigma^2)  \bar{\rho}\\
  & + \frac{2 \gamma^2}{K}  \frac{n - 1}{n} M^2  \sum_{k = 0}^{K - 1} \sum_{j
  = 0}^{k - \tau_k - 1} \begin{array}{c}
    \left( 12 L^2  \mathbb{E} \hat{M}_{j} + 2 \mathbb{E} \left\| \sum_{i =
    1}^n p_i \nabla f_i (\hat{x}_{j}^{i}) \right\|^2 \right) \times\\
    \left( \rho^{k - \tau_k - 1 - j} + 2 (k - \tau_k - 1 - j) \rho^{\frac{k -
    \tau_k - 1 - j}{2}} \right)
  \end{array}\\
  = & 2 \gamma^2  (M \sigma^2 + 6 M^2 \varsigma^2)  \bar{\rho}\\
  & + \frac{2 \gamma^2}{K}  \frac{n - 1}{n} M^2  \sum_{k = 0}^{K - 1} \sum_{j
  = 0}^{k - \tau_k - 1} \begin{array}{c}
    \left( 12 L^2  \mathbb{E} \hat{M}_{j} + 2 \mathbb{E} \left\| \sum_{i =
    1}^n p_i \nabla f_i (\hat{x}_{j}^{i}) \right\|^2 \right) \times\\
    \left( \rho^{\max \{ k - \tau_k - 1 - j, 0 \}} + 2 (\max \{ k - \tau_k - 1
    - j, 0 \}) \rho^{\frac{\max \{ k - \tau_k - 1 - j, 0 \}}{2}} \right)
  \end{array}\\
  \leqslant & 2 \gamma^2  (M \sigma^2 + 6 M^2 \varsigma^2)  \bar{\rho}\\
  & + \frac{2 \gamma^2}{K}  \frac{n - 1}{n} M^2  \sum_{j = 0}^{K - 1}
  \begin{array}{c}
    \left( 12 L^2  \mathbb{E} \hat{M}_{j} + 2 \mathbb{E} \left\| \sum_{i =
    1}^n p_i \nabla f_i (\hat{x}_{j}^{i}) \right\|^2 \right) \times\\
    \sum_{k = j + 1}^{\infty} \left( \rho^{\max \{ k - \tau_k - 1 - j, 0 \}} +
    2 (\max \{ k - \tau_k - 1 - j, 0 \}) \rho^{\frac{\max \{ k - \tau_k - 1 -
    j, 0 \}}{2}} \right)
  \end{array}\\
  \leqslant & 2 \gamma^2  (M \sigma^2 + 6 M^2 \varsigma^2)  \bar{\rho}\\
  & + \frac{2 \gamma^2}{K}  \frac{n - 1}{n} M^2  \sum_{j = 0}^{K - 1} \left(
  12 L^2  \mathbb{E} \hat{M}_{j} + 2 \mathbb{E} \left\| \sum_{i = 1}^n p_i
  \nabla f_i (\hat{x}_{j}^{i}) \right\|^2 \right)  \left( T + \sum_{h =
  0}^{\infty} \left( \rho^h + 2 h \rho^{\frac{h}{2}} \right) \right)\\
  \leqslant & 2 \gamma^2  (M \sigma^2 + 6 M^2 \varsigma^2)  \bar{\rho}\\
  & + \frac{2 \gamma^2}{K} M^2  \left( T \frac{n - 1}{n} + \bar{\rho} \right)
  \sum_{j = 0}^{K - 1} \left( 12 L^2  \mathbb{E} \hat{M}_{j} + 2 \mathbb{E}
  \left\| \sum_{i = 1}^n p_i \nabla f_i (\hat{x}_{j , i}) \right\|^2
  \right)\\
  \leqslant & 2 \gamma^2  (M \sigma^2 + 6 M^2 \varsigma^2)  \bar{\rho}\\
  & + \frac{4 \gamma^2 M^2 }{K} \left( T \frac{n - 1}{n} + \bar{\rho} \right)
  \sum_{k = 0}^{K - 1} \mathbb{E} \left\| \sum_{i = 1}^n p_i \nabla f_i (\hat{x}_{k}^{i}) \right\|^2\\
  & + \frac{24 L^2 \gamma^2 M^2 }{K}  \left( T \frac{n - 1}{n} + \bar{\rho}
  \right)  \sum_{k = 0}^{K - 1} \mathbb{E} \hat{M}_{k} .
\end{align*}
By rearranging the terms we obtain
\begin{align*}
  & \underbrace{\left( 1 - 24 L^2 M^2 \gamma^2  \left( T \frac{n - 1}{n} +
  \bar{\rho} \right) \right)}_{C_1} \frac{\sum_{k = 0}^{K - 1} \mathbb{E} \hat{M}_{k}}{K}\\
  \leqslant & 2 \gamma^2  (M \sigma^2 + 6 M^2 \varsigma^2)  \bar{\rho} +
  \frac{4 \gamma^2 M^2 }{K}  \left( T \frac{n - 1}{n} + \bar{\rho} \right)
  \sum_{k = 0}^{K - 1} \mathbb{E} \left\| \sum_{i = 1}^n p_i \nabla f_i (\hat{x}_{k}^{i}) \right\|^2,
\end{align*}
we complete the proof.
\end{proof}

\begin{lemma}
\label{lemma:20170614-115656}
For all $k\ge 0$ we have
\[ \mathbb{E} \left\| \frac{X_k  \mathbf{1}_n - \hat{X}_{k}
   \mathbf{1}_n}{n} \right\|^2 \leqslant \frac{\tau_k^2 \gamma^2 \sigma^2
   M}{n^2} + \tau_k \gamma^2  \sum_{t = 1}^{\tau_k} \left( \frac{M^2}{n^2}
   \sum_{i = 1}^n p_i  \mathbb{E} \| \nabla f_i (\hat{x}_{k - t}^{i})\|^2 \right) . \]
\end{lemma}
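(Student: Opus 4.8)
The plan is to exploit the fact that right-multiplying the update rule by $\mathbf{1}_n$ annihilates the averaging matrix, collapsing the evolution of the averaged iterate to a plain (stale) gradient recursion. Concretely, since each $W_j$ is doubly stochastic (\Cref{ass:20170614-002451}:2) we have $W_j\mathbf{1}_n=\mathbf{1}_n$, so applying $\mathbf{1}_n$ to $X_{j+1}=X_jW_j-\gamma\,\partial g(\hat{X}_j;\xi_j^{i_j},i_j)$ yields $X_{j+1}\mathbf{1}_n = X_j\mathbf{1}_n - \gamma\,\partial g(\hat{X}_j;\xi_j^{i_j},i_j)\mathbf{1}_n$. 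The mixing step has vanished, and only the gradient contributions survive.

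First I would telescope this identity from $j=k-\tau_k$ up to $j=k-1$ (recall $\hat{X}_k=X_{k-\tau_k}$) to obtain, after re-indexing $t=k-j$,
\[
\frac{X_k\mathbf{1}_n-\hat{X}_k\mathbf{1}_n}{n} = -\gamma\sum_{t=1}^{\tau_k}\frac{\partial g(\hat{X}_{k-t};\xi_{k-t}^{i_{k-t}},i_{k-t})\mathbf{1}_n}{n}.
\]
This writes the quantity of interest purely as a sum of $\tau_k$ stale stochastic-gradient terms, each of exactly the form already controlled by \Cref{lemma:zhlkdsamfls}.

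Next I would take squared norms and apply the Cauchy--Schwarz inequality $\|\sum_{t=1}^{\tau_k}a_t\|^2\le \tau_k\sum_{t=1}^{\tau_k}\|a_t\|^2$, then take expectations, giving $\mathbb{E}\|\cdot\|^2\le \gamma^2\tau_k\sum_{t=1}^{\tau_k}\mathbb{E}\big\|\partial g(\hat{X}_{k-t};\cdots)\mathbf{1}_n/n\big\|^2$. Bounding each summand via \Cref{lemma:zhlkdsamfls} replaces the $t$-th term by $\frac{\sigma^2 M}{n^2}+\frac{M^2}{n^2}\sum_i p_i\mathbb{E}\|\nabla f_i(\hat{x}_{k-t}^i)\|^2$. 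Summing the constant $\sigma^2$-pieces over the $\tau_k$ values of $t$ and multiplying by the outer prefactor $\tau_k$ produces the $\tau_k^2\gamma^2\sigma^2M/n^2$ term, while the gradient pieces assemble into the remaining $\tau_k\gamma^2\sum_{t=1}^{\tau_k}\frac{M^2}{n^2}\sum_i p_i\mathbb{E}\|\nabla f_i(\hat{x}_{k-t}^i)\|^2$ term, which is precisely the claimed bound.

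There is no genuinely hard step here; the whole argument rests on the single observation that $\mathbf{1}_n$ is a right eigenvector of every $W_j$, so that the consensus direction evolves independently of the gossip matrices and the averaged model simply drifts by accumulated stale gradients. The only point demanding a little care is the role of $\tau_k$ as a summation limit: since $\tau_k\le T$ is bounded and the Cauchy--Schwarz step holds pathwise for whatever value $\tau_k$ takes, one treats $\tau_k$ as the (bounded) delay, which is exactly how the stated bound is phrased.
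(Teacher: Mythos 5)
Your proposal is correct and follows essentially the same route as the paper's proof: both exploit $W_j\mathbf{1}_n=\mathbf{1}_n$ to telescope the averaged iterate into a sum of $\tau_k$ stale stochastic-gradient terms, apply the Cauchy--Schwarz bound $\|\sum_{t}a_t\|^2\le\tau_k\sum_t\|a_t\|^2$, and then invoke \Cref{lemma:zhlkdsamfls} termwise. No further comment is needed.
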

\begin{proof}
  \begin{align*}
  \mathbb{E} \left\| \frac{X_k  \mathbf{1}_n - \hat{X}_{k}
  \mathbf{1}_n}{n} \right\|^2 \overset{\text{\Cref{ass:20170614-002451}-7}}{=} & \mathbb{E} \left\| \frac{\sum_{t=1 }^{\tau_{k}} \gamma \partial g (\hat{X}_{k - t} ; \xi_{k - t}^{i_{k-t}}, i_{k - t}) \mathbf{1}_n}{n}
  \right\|^2\\
  \leqslant & \tau_k  \sum_{t = 1}^{\tau_k} \gamma^2  \mathbb{E} \left\|
  \frac{\partial g (\hat{X}_{k - t} ; \xi_{k - t}^{i_{k-t}}, i_{k
  - t}) \mathbf{1}_n}{n} \right\|^2\\
  \overset{\text{{\Cref{lemma:zhlkdsamfls}}}}{\leqslant} & \tau_k  \sum_{t =
  1}^{\tau_k} \gamma^2  \left( \frac{\sigma^2 M}{n^2} + \frac{M^2}{n^2}
  \sum_{i = 1}^n p_i  \mathbb{E} \left\| \nabla f_i (\hat{x}_{k - t}^{i})\right\|^2 \right) ,
\end{align*}
where the first step comes from any $n\times n$ doubly stochastic matrix multiplied by
$\mathbf{1}_{n}$ equals $\mathbf{1}_{n}$ and \Cref{ass:20170614-002451}-7.
\end{proof}

\begin{proof}[Proof to \Cref{coro:Sun Aug 13 00:14:48 EDT 2017}]
  To prove this result, we will apply \Cref{thm:main-thm}. We first verify that
  all conditions can be satisfied in \Cref{thm:main-thm}.

First $C_1 > 0$ can be satisfied by a stronger condition $C_1 \geq 1 / 2$
which can be satisfied by $\gamma \leqslant \frac{1}{4 \sqrt{6} ML}  \left( T
\frac{n - 1}{n} + \bar{\rho} \right)^{- 1 / 2}$. Second $C_3 \leq 1$ can be
satisfied by : \[\gamma \leqslant \min \left\{ \frac{n}{8 MT^2 L}, \frac{1}{8
\sqrt{3} LM}  \bar{\rho}^{- 1 / 2}, \frac{1}{32 nML}  \bar{\rho}^{- 1},
\frac{n^{1 / 3}}{8 \sqrt{6} MLT^{2 / 3}}  \bar{\rho}^{- 1 / 3} \right\}\] and
$C_1 \ge 1 / 2$, which can be seen from
\begin{align*}
  C_3 = & \frac{1}{2} + \frac{2 \left( 6 \gamma^2 L^2 M^2 + \gamma nML +
  \frac{12 M^3 L^3 T^2 \gamma^3}{n} \right)  \bar{\rho}}{C_1} + \frac{LT^2
  \gamma M}{n}\\
  \overset{C_1 \geqslant \frac{1}{2}}{\leqslant} & \frac{1}{2} + 24 \gamma^2
  L^2 M^2 + 4 \gamma nML + \frac{48 M^3 L^3 T^2 \gamma^3}{n}  \bar{\rho} +
  \frac{LT^2 \gamma M}{n} .
\end{align*}
The requirements on $\gamma$ are given by making each of the last four terms
smaller than $1 / 8$:
\begin{align*}
  \frac{LT^2 \gamma M}{n} \leqslant & \frac{1}{8} \Longleftarrow \gamma
  \leqslant \frac{n}{8 MT^2 L},\\
  24 \gamma^2 L^2 M^2  \bar{\rho} \leqslant & \frac{1}{8} \Longleftarrow
  \gamma \leqslant \frac{1}{8 \sqrt{3} LM}  \bar{\rho}^{- 1 / 2},\\
  4 \gamma nML \bar{\rho} \leqslant & \frac{1}{8} \Longleftarrow \gamma
  \leqslant \frac{1}{32 nML}  \bar{\rho}^{- 1},
\end{align*}
and
\begin{align*}
  \frac{48 M^3 L^3 T^2 \gamma^3}{n}  \bar{\rho} \leqslant & \frac{1}{8}
  \Longleftarrow \gamma \leqslant \frac{n^{1 / 3}}{8 \sqrt{6} MLT^{2 / 3}}
  \bar{\rho}^{- 1 / 3} .
\end{align*}
Third $C_2 \geqslant 0$ can be satisfied by \[\gamma \le \min \left\{
\frac{n}{10 LM}, \frac{n}{2 \sqrt{5} MLT}, \frac{n^{1 / 3}}{8 ML}  \left( T
\frac{n - 1}{n} + \bar{\rho} \right)^{- 1 / 3}, \frac{1}{4 \sqrt{5} LM}
\left( T \frac{n - 1}{n} + \bar{\rho} \right)^{- 1 / 2}, \frac{n^{1 / 2}}{6
MLT^{1 / 2}}  \left( T \frac{n - 1}{n} + \bar{\rho} \right)^{- 1 / 4}
\right\}\] and $C_1 \ge 1 / 2$, which can be seen from
\begin{align*}
  C_2 := & \frac{\gamma M}{2 n} - \frac{\gamma^2 LM^2}{n^2} - \frac{2 M^3 L^2
  T^2 \gamma^3}{n^3} - \left( \frac{6 \gamma^2 L^3 M^2}{n^2} + \frac{\gamma
  M}{n} L^2 + \frac{12 M^3 L^4 T^2 \gamma^3}{n^3} \right)  \frac{4 M^2
  \gamma^2  (T \frac{n - 1}{n} + \bar{\rho})}{C_1} \geqslant 0\\
  \overset{C_1 \geqslant \frac{1}{2}}{\Longleftarrow} & 1 \geqslant \frac{2
  \gamma LM}{n} + \frac{4 M^2 L^2 T^2 \gamma^2}{n^2} + \frac{96 \gamma L^3
  M}{n} + 16 L^2 + \frac{192 M^2 L^4 T^2 \gamma^2}{n^2} M^2 \gamma^2  (T
  \frac{n - 1}{n} + \bar{\rho}) .
\end{align*}
The last inequality is satisfied given the requirements on $\gamma$ because
each term on the RHS is bounded by $1 / 5$:
\begin{align*}
  \frac{2 \gamma LM}{n} \leqslant & \frac{1}{5} \Longleftarrow \gamma
  \leqslant \frac{n}{10 LM},\\
  \frac{4 M^2 L^2 T^2 \gamma^2}{n^2} \leqslant & \frac{1}{5} \Longleftarrow
  \gamma \leqslant \frac{n}{2 \sqrt{5} MLT},\\
  \frac{96 \gamma L^3 M}{n} M^2 \gamma^2  (T \frac{n - 1}{n} + \bar{\rho})
  \leqslant & \frac{1}{5} \Longleftarrow \gamma \leqslant \frac{n^{1 / 3}}{8
  ML}  (T \frac{n - 1}{n} + \bar{\rho})^{- 1 / 3},\\
  16 L^2 M^2 \gamma^2  (T \frac{n - 1}{n} + \bar{\rho}) \leqslant &
  \frac{1}{5} \Longleftarrow \gamma \leqslant \frac{1}{4 \sqrt{5} LM}  (T
  \frac{n - 1}{n} + \bar{\rho})^{- 1 / 2},\\
  \frac{192 M^2 L^4 T^2 \gamma^2}{n^2} M^2 \gamma^2  (T \frac{n - 1}{n} +
  \bar{\rho}) \leqslant & \frac{1}{5} \Longleftarrow \gamma \leqslant
  \frac{n^{1 / 2}}{6 MLT^{1 / 2}}  (T \frac{n - 1}{n} + \bar{\rho})^{- 1 / 4}
  .
\end{align*}
Combining all above the requirements on $\gamma$ to satisfy $C_1 \ge 1 / 2,
C_2 \ge 0$ and $C_3 \le 1$ are
\[ \gamma \leqslant \frac{1}{ML} \min \left\{ \begin{array}{c}
     \frac{1}{4 \sqrt{6}}  \left( T \frac{n - 1}{n} + \bar{\rho} \right)^{- 1
     / 2}, \frac{n}{8 T^2}, \frac{1}{8 \sqrt{3}}  \bar{\rho}^{- 1 / 2},\\
     \frac{1}{32 n}  \bar{\rho}^{- 1}, \frac{n^{1 / 3}}{8 \sqrt{6} T^{2 / 3}}
     \bar{\rho}^{- 1 / 3},\\
     \frac{n}{10}, \frac{n}{2 \sqrt{5} T}, \frac{n^{1 / 3}}{8}  \left( T
     \frac{n - 1}{n} + \bar{\rho} \right)^{- 1 / 3},\\
     \frac{1}{4 \sqrt{5}}  \left( T \frac{n - 1}{n} + \bar{\rho} \right)^{- 1
     / 2}, \frac{n^{1 / 2}}{6 T^{1 / 2}}  \left( T \frac{n - 1}{n} +
     \bar{\rho} \right)^{- 1 / 4}
   \end{array} \right\} . \]
Note that the RHS is larger than
\begin{align*}
  U := & \frac{1}{ML} \min \left\{ \begin{array}{c}
    \frac{1}{8 \sqrt{3}  \sqrt{T \frac{n - 1}{n} + \bar{\rho}}}, \frac{n}{8
    T^2}, \frac{1}{32 n \bar{\rho}}, \frac{n}{10}, \frac{n^{1 / 12} (n - 1)^{-
    1 / 4}}{\left( 8 \sqrt{6} T^{2 / 3} + 8 \right)  \left( T + \bar{\rho}
    \frac{n}{n - 1} \right)^{1 / 3}}
  \end{array} \right\} .
\end{align*}
Let $\gamma = \frac{n}{10 ML + \sqrt{\sigma^2 + 6 M \varsigma^2}  \sqrt{KM}}$
then if $\gamma \leqslant U$ we will have $C_1 \ge 1 / 2, C_2 \ge 0$ and $C_3
\le 1$. Further investigation gives us
\begin{align*}
  \gamma = \frac{n}{10 ML + \sqrt{\sigma^2 + 6 M \varsigma^2}  \sqrt{KM}}
  \leqslant & \frac{1}{ML} \min \left\{ \begin{array}{c}
    \frac{1}{8 \sqrt{3}  \sqrt{T \frac{n - 1}{n} + \bar{\rho}}}, \frac{n}{8
    T^2}, \frac{1}{32 n \bar{\rho}},\\
    \frac{n}{10}, \frac{n^{1 / 12} (n - 1)^{- 1 / 4}}{\left( 8 \sqrt{6} T^{2 /
    3} + 8 \right)  \left( T + \bar{\rho} \frac{n}{n - 1} \right)^{1 / 3}}
  \end{array} \right\}\\
  \Longleftarrow 10 ML + \sqrt{\sigma^2 + 6 M \varsigma^2}  \sqrt{KM}
  \geqslant & nML \max \left\{ \begin{array}{c}
    8 \sqrt{3}  \sqrt{T \frac{n - 1}{n} + \bar{\rho}}, \frac{8 T^2}{n}, 32 n
    \bar{\rho},\\
    \frac{\left( 8 \sqrt{6} T^{2 / 3} + 8 \right)  \left( T + \bar{\rho}
    \frac{n}{n - 1} \right)^{1 / 3}}{n^{1 / 12} (n - 1)^{- 1 / 4}}
  \end{array} \right\}\\
  \Longleftarrow K \geqslant & \frac{ML^2 n^2}{\sigma^2 + 6 M \varsigma^2}
  \max \left\{ \begin{array}{c}
    192 \left( T \frac{n - 1}{n} + \bar{\rho} \right), \frac{64 T^4}{n^2},
    1024 n^2  \bar{\rho}^2,\\
    \frac{\left( 8 \sqrt{6} T^{2 / 3} + 8 \right)^2  \left( T + \bar{\rho}
    \frac{n}{n - 1} \right)^{2 / 3}}{n^{1 / 6} (n - 1)^{- 1 / 2}}
  \end{array} \right\} .
\end{align*}
It follows from {\Cref{thm:main-thm}} that if the last inequality is
sastisfied and $\gamma = \frac{n}{10 ML + \sqrt{\sigma^2 + 6 M \varsigma^2}
\sqrt{KM}}$, we have
\begin{align*}
  \frac{\sum_{k = 0}^{K - 1} \mathbb{E} \left\| \nabla f \left( \frac{X_k
  \mathbf{1}_n}{n} \right) \right\|^2}{K} \leqslant & \frac{2 (\mathbb{E} f
  (x_0) - f^{\ast}) n}{\gamma KM} + \frac{2 \gamma L}{Mn}  (M \sigma^2 + 6 M^2
  \varsigma^2)\\
  = & \frac{20 (\mathbb{E} f (x_0) - f^{\ast}) L}{K} + \frac{2 (\mathbb{E} f
  (x_0) - f^{\ast})  \sqrt{\sigma^2 + 6 M \varsigma^2}}{\sqrt{KM}}\\
  & + \frac{2 L}{M \left( 10 ML + \sqrt{\sigma^2 + 6 M \varsigma^2}
  \sqrt{KM} \right)}  (M \sigma^2 + 6 M^2 \varsigma^2)\\
  \leqslant & \frac{20 (\mathbb{E} f (x_0) - f^{\ast}) L}{K} + \frac{2
  (\mathbb{E} f (x_0) - f^{\ast} + L)  \sqrt{\sigma^2 + 6 M
  \varsigma^2}}{\sqrt{KM}} .
\end{align*}
This completes the proof.
\end{proof}

\end{document}